\numberwithin{equation}{subsection}
\DeclareMathOperator{\eval}{ev}
\DeclareMathOperator{\ord}{ord}
\DeclareMathOperator{\Aut}{Aut}
\DeclareMathOperator{\Crit}{crit}
\DeclareMathOperator{\im}{im}
\DeclareMathOperator{\Diff}{Diff}
\DeclareMathOperator{\Hom}{Hom}
\DeclareMathOperator{\ind}{ind}
\DeclareMathOperator{\Int}{int}
\DeclareMathOperator{\Maslov}{M}
\DeclareMathOperator{\Flow}{Flow}
\DeclareMathOperator{\tb}{tb}
\DeclareMathOperator{\expdim}{expdim}
\DeclareMathOperator{\SFT}{SFT}
\newcommand{\C}{\mathbb{C}}
\newcommand{\R}{\mathbb{R}}
\newcommand{\Z}{\mathbb{Z}}
\newcommand{\N}{\mathbb{N}}
\newcommand{\B}{\mathbb{B}}
\newcommand{\disk}{\mathbb{D}}
\newcommand{\chord}{r}
\newcommand{\bigO}{\mathcal{O}}
\newcommand{\region}{\mathcal{R}}
\newcommand{\action}{\mathcal{A}}
\newcommand{\energy}{\mathcal{E}}
\newcommand{\delbar}{\overline{\partial}}
\newcommand{\Cinfty}{\mathcal{C}^{\infty}}
\newcommand{\Rthree}{(\R^{3},\xi_{std})}
\newcommand{\Circle}{S^{1}}
\newcommand{\half}{\frac{1}{2}}
\newcommand{\ModSpace}{\mathcal{M}}
\newcommand{\be}{\begin{enumerate}}
\newcommand{\ee}{\end{enumerate}}
\newcommand{\Cfancy}{\mathcal{C}}
\newcommand{\Dfancy}{\mathcal{D}}
\newcommand{\norm}[1]{\left\lVert#1\right\rVert}
\newcommand{\obstruction}{\mathcal{O}}
\newcommand{\fatSigma}{\boldsymbol{\Sigma}}
\newtheorem{thm}{Theorem}[section]
\newtheorem{assump}[thm]{Assumptions}
\newtheorem{prop}[thm]{Proposition}
\newtheorem{defn}[thm]{Definition}
\newtheorem{lemma}[thm]{Lemma}
\newtheorem{cor}[thm]{Corollary}
\newtheorem{q}[thm]{Question}
\newtheorem{rmk}[thm]{Remark}
\newtheorem{edits}[thm]{Editor notes}
\title{Simplified SFT moduli spaces for Legendrian links}
\author{Russell Avdek}
\address{Department of mathematics, Uppsala University, Box 480, 751 06 Uppsala, Sweden}
\thanks{The author is partly supported by the grant KAW 2016.0198 from the Knut and Alice Wallenberg Foundation}
\email{russell.avdek@math.uu.se} 
\date{\today}
\begin{document}
\begin{abstract}
We study moduli spaces $\ModSpace$ of holomorphic maps $U$ to $\R^{4}$ with boundaries on the Lagrangian cylinder over a Legendrian link $\Lambda \subset \Rthree$. We allow our domains, $\dot{\Sigma}$, to have non-trivial topology in which case $\ModSpace$ is the zero locus of an obstruction function $\obstruction$, sending a moduli space of holomorphic maps in $\C$ to $H^{1}(\dot{\Sigma})$. In general, $\obstruction^{-1}(0)$ is not combinatorially computable. However after a Legendrian isotopy $\Lambda$ can be made \emph{left-right-simple}, implying that any $U$
\be
\item of index $1$ is a disk with one or two positive punctures for which $\pi_{\C}\circ U$ is an embedding.
\item of index $2$ is either a disk or an annulus with $\pi_{\C} \circ U$ simply covered and without interior critical points.
\ee
Therefore any SFT invariant of $\Lambda$ is combinatorially computable using only disks with $\leq 2$ positive punctures.
\end{abstract}
\maketitle

\setcounter{tocdepth}{1}
\tableofcontents
\pagebreak

\section{Introduction}

Let $\Rthree$ denote the standard contact structure on $3$-space, $\R^{3} \simeq \R \times \C$. With coordinates $(t, x, y)$,
\begin{equation*}
\xi_{std} = \ker(\alpha_{std}), \quad \alpha_{std} = dt - ydx.
\end{equation*}
A link $\Lambda$ in $\R^{3}$ is \emph{Legendrian} if it is tangent to $\xi_{std}$ in which case we write $\Lambda \subset \Rthree$.

In \cite{Chekanov:LCH, Eliashberg:LCH}, Chekanov and Eliashberg define a combinatorially-computable symplectic field theory (SFT) \cite{EGH:SFTIntro} invariant of isotopy classes of Legendrian links in $\Rthree$, called \emph{Legendrian contact homology}. For a Legendrian link $\Lambda \subset \Rthree$ we will denote this invariant by $LCH(\Lambda)$. See \cite{EtnyreNg:LCHSurvey} for an overview of $LCH$ and related invariants.

The graded algebra $LCH(\Lambda)$ is the homology of a differential graded algebra (DGA) generated by $\partial_{t}$ chords of $\Lambda$. The differential for the DGA counts finite energy holomorphic disks $U$ with boundary on the Lagrangian cylinder
\begin{equation*}
\R \times \Lambda \subset \R_{s} \times \R^{3}.
\end{equation*}
The disks have a single boundary puncture asymptotic to a chord of $\Lambda$ at the $s \rightarrow \infty$ end of $\R \times \Lambda$ and any number of boundary punctures asymptotic to chords at the $s \rightarrow -\infty$ end of $\R \times \Lambda$. In this article, we always use the \emph{standard complex structure}, $J$, defined
\begin{equation}\label{Eq:Jstd}
J \partial_{s} = \partial_{t},\quad J \partial_{x} = \partial_{y}.
\end{equation}
See Section \ref{Sec:SpecifyJ}. We also assume that the domains of holomorphic maps are connected.

In \cite{Avdek:RSFT, Ekholm:Z2RSFT, Ng:RSFT}, the author, Ekholm, and Ng define versions of \emph{Legendrian rational symplectic field theory} ($RSFT$) for Legendrian links by counting holomorphic disks with any numbers of positive and negative punctures. A next logical step in the development of SFT for Legendrian links would be to incorporate counts holomorphic maps $U$ whose domains are any compact, connected Riemann surfaces $(\Sigma, j)$ with boundary punctures removed, $\dot{\Sigma} \subset \Sigma$, allowing multiple boundary components or positive genus. While an invariant which counts all such curves is yet to be rigorously defined for Legendrian links, we'll call such a hypothetical invariant \emph{Legendrian SFT}, and denote it by $LSFT(\Lambda)$.

Chekanov's $LCH$ and Ng's $RSFT$ are defined by counting immersions of disks to $\C$ with boundary on $\pi_{\C}(\Lambda)$, facilitating combinatorial proofs of well-definition and invariance under Legendrian isotopy. The fact that these combinatorial invariants coincide with their analytically-defined counterparts follows from the Riemann mapping theorem coupled with analysis of Etnyre, Ng, and Sullivan \cite[Section 7]{ENS:Orientations}. The goal of this article is to address the possibility of combinatorial counting of holomorphic curves on $\Lambda$ whose domains are not necessarily contractible.

\begin{q}\label{Q:Main}
Can the $LSFT$ moduli spaces $\ModSpace^{\Lambda}$ of holomorphic maps
\begin{equation*}
U = (s, t, u): \dot{\Sigma} \rightarrow \R \times \R \times \C,\quad U(\partial \dot{\Sigma}) \subset \R \times \Lambda
\end{equation*}
be described combinatorially from $\lambda = \pi_{\C}(\Lambda) \subset \C$ when $H^{1}(\dot{\Sigma}) \neq 0$?
\end{q}

\subsection{Main results}

The answer to this question is ``no'' in general but ``yes'' after applying a Legendrian isotopy to $\Lambda$. This first assertion will be justified shortly. The second is the content of the following theorem:

\begin{thm}\label{Thm:Main}
After applying a Legendrian isotopy to the link $\Lambda$, the following conditions apply to holomorphic curves with boundary on $\R \times \Lambda$ and boundary punctures asymptotic to $\partial_{t}$-chords of $\Lambda$:
\be
\item Every $\ind = 1$ holomorphic curve $U$ with boundary on $\R \times \Lambda$ is a disk with $1$ or $2$ positive punctures and such that $\pi_{\C}\circ U$ is an embedding. There are only finitely many such disks up to holomorphic reparameterization and translation in the $s$-coordinate.
\item Every $\ind = 2$ holomorphic curve $U$ with boundary on $\R \times \Lambda$ is either a disk with at most $3$ positive punctures, or an annulus with at most $2$ positive punctures.  The map $\pi_{\C} \circ U$ is simple and without critical points in the interior of its domain.\footnote{We recall -- cf. \cite{MS:Curves} -- that a holomorphic map $u$ is \emph{multiply covered} if it may be written $u = \widetilde{u}\circ \phi$ where $\phi$ is a branched covering of the domain of $u$ onto another Riemann surface, which is the domain of $\widetilde{u}$. \emph{Simple curves} are those which are not multiply covered.} Consequently, $U$ is simple and without interior critical points.
\ee
\end{thm}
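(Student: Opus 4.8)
The plan is to transfer every claim about a holomorphic curve $U=(s,t,u)\colon\dot{\Sigma}\to\R\times\R\times\C$ to its shadow $u=\pi_{\C}\circ U\colon\dot{\Sigma}\to\C$ with boundary on $\lambda=\pi_{\C}(\Lambda)$, having arranged by a normal form that the admissible $u$ are essentially combinatorial. I would first set up two tools. \emph{(a) The normal form.} Introduce the ``left-right-simple'' condition on a Lagrangian diagram and prove that every $\Lambda$ is Legendrian isotopic to such a link; since all SFT-type data are isotopy invariant this is harmless. The content of the condition is that for a left-right-simple $\Lambda$ every $\partial_{t}$-chord, its grading, and --- crucially --- the contribution of the corresponding puncture to the Fredholm index are pinned down, each positive puncture contributing at least $1$ and each negative puncture at least $0$, with equality only in innermost local configurations. \emph{(b) The obstruction picture.} Record the index/dimension formula for $U$ together with the description $\ModSpace=\obstruction^{-1}(0)$, where $\obstruction$ sends the moduli space $\ModSpace^{\lambda}$ of holomorphic maps to $\C$ with the prescribed boundary and asymptotic data to $H^{1}(\dot{\Sigma};\R)$, so that $\dim\ModSpace=\dim\ModSpace^{\lambda}-b_{1}(\dot{\Sigma})+1$ once $\obstruction$ is transverse.

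\emph{The core estimate.} Fix $U$ with $\ind(U)\in\{1,2\}$. The maximum principle for the $\R_{s}$-coordinate forces at least one positive puncture. Viewing $u$ as a branched covering onto the union of the regions of $\C\smallsetminus\lambda$ that it fills and combining Riemann--Hurwitz with the index formula and the bookkeeping of (a), I would derive a lower bound for $\ind(U)$ to which each positive puncture beyond the first, each interior critical point of $u$, each unit of genus of $\dot{\Sigma}$, and each boundary component of $\dot{\Sigma}$ beyond the first contributes positively. For $\ind(U)=1$ this forces $\dot{\Sigma}$ to have genus $0$, $u$ to have no interior critical points, and the number of positive punctures to be at most two; a further comparison of $\dim\ModSpace^{\lambda}$ with the codimension $b_{1}(\dot{\Sigma})$ of $\obstruction^{-1}(0)$ then rules out the annulus, so $\dot{\Sigma}$ is a disk. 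For $\ind(U)=2$ the only survivors are the disk with at most three positive punctures and no interior critical points and the annulus with at most two positive punctures and no interior critical points. In every case $u$ has no interior critical points.

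\emph{Embedding, simplicity, and finiteness.} For item (1) I would upgrade ``no interior critical points'' to ``$u$ embedded'': by the normal form the boundary of $u$ traverses an embedded arc-system of $\lambda$, and a critical-point-free holomorphic disk in $\C$ bounding such an arc-system is injective by the argument principle --- the analytic input behind the combinatorial counts of \cite{Chekanov:LCH}; cf.\ \cite[Section 7]{ENS:Orientations}. For item (2), a nontrivial factorization $u=\widetilde{u}\circ\phi$ through a branched cover would exhibit $\dot{\Sigma}$ as a branched cover of a surface of strictly smaller complexity, hence would force an interior critical point or extra genus or an extra boundary component, all of which have just been excluded; so $u$ is simple. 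Since $\pi_{\C}$ is a submersion, the absence of interior critical points and simplicity pass from $u$ to $U$, and injectivity of $\pi_{\C}\circ U$ forces injectivity of $U$. Finiteness in (1) follows from SFT compactness: there are finitely many assignments of chords to punctures, and each corresponding moduli space --- which is $0$-dimensional modulo reparametrization and $\R_{s}$-translation --- is compact, because a broken limit would contain a nontrivial curve of index $\le 0$, impossible for a left-right-simple $\Lambda$; and a compact $0$-manifold is finite.

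I expect the main obstacle to be the core estimate: establishing, uniformly over all admissible topologies of $\dot{\Sigma}$, a lower bound for $\ind(U)$ with separate positive contributions from extra positive punctures, interior critical points, genus, and extra boundary components. This requires controlling the index contribution of \emph{every} Reeb chord at once --- the real content of the left-right-simple normal form --- and reconciling $\dim\ModSpace^{\lambda}$ with the codimension of $\obstruction^{-1}(0)$ precisely enough that the disk-versus-annulus dichotomy comes out exactly as stated at $\ind=1$ and $\ind=2$. Showing that the normal form can be reached by a Legendrian isotopy in the first place is a second, more technical, ingredient.
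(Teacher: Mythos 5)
Your outline correctly identifies the two index formulae (a Maslov/branch-order formula and a left-right-simple--specific formula in which each positive puncture, each unit of genus, and each extra boundary component contributes positively), and these do give the easy part of the theorem: disks of $\ind(U)=1,2$ have at most $2$, resp.\ $3$, positive punctures, and $\chi(\Sigma)<-1$ curves have $\ind(U)\geq 3$. But there are two genuine gaps at exactly the points where the paper has to work hardest. First, your mechanism for excluding $\ind(U)=1$ annuli --- ``a comparison of $\dim\ModSpace^{\lambda}$ with the codimension $b_{1}(\dot{\Sigma})$ of $\obstruction^{-1}(0)$'' --- cannot work: for an annulus with $\ind(U)=1$ one has $\dim\ModSpace^{\lambda}=1$ and $H^{1}(\Sigma)\simeq\R$, so $\obstruction^{-1}(0)$ has expected dimension $0$ and a transversal dimension count is perfectly consistent with rigid annuli existing. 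Indeed the paper exhibits (Section \ref{Sec:RigidUAnnulus}) a right-simple link carrying an honest rigid holomorphic annulus, so no dimension argument can rule these out; the exclusion for left-right-simple $\Lambda$ is Proposition \ref{Prop:NoAnnuli}, a geometric degeneration argument (perturb a non-convex corner into a boundary branch point, push the branch point along an arc until the domain pinches, and contradict the embeddedness of the resulting $\ind=0$ disk).

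Second, your ``core estimate'' asks for an additive lower bound in which interior critical points, genus, and extra boundary components all contribute enough to kill the remaining cases, and you propose to get it from Riemann--Hurwitz plus index bookkeeping. The honest output of the left-right-simple index formula is only $\ind(U)=-\chi(\Sigma)+\#\left(u|_{\partial\dot{\Sigma}}^{-1}(\Crit(x|_{\lambda}))\right)+\#(p_{i}^{+})$, which for $\chi(\Sigma)=-1$ gives $\ind(U)\geq 2$, not $\geq 3$, and which for an $\ind(U)=2$ annulus is compatible (via the branch-order formula) with one interior critical point. So the statements you need --- no $\ind(U)=2$ curves with $\chi(\Sigma)=-1$, and no interior critical points on $\ind(U)=2$ annuli --- are not index computations at all; in the paper they are Propositions \ref{Prop:NoLowIndexChiMinusOne} and \ref{Prop:AnnuliInteriorCriticalPoints}, both proved by cutting/degeneration arguments (pushing critical points to the boundary, splitting along arcs mapped into $\lambda$) that ultimately reduce to the embeddedness statement for rigid disks (Lemma \ref{Lemma:DiskEmbedding} and Proposition \ref{Prop:HoloDiskEmbedding}) and to Proposition \ref{Prop:NoTouching}. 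Your embeddedness step is in the right spirit (the left-right-simple condition forces $x\circ u|_{\partial\disk}$ to have exactly one local maximum and one local minimum, and such an immersed disk is embedded), and your finiteness argument is fine, but as written the proposal is missing the geometric degeneration machinery that carries the main weight of the proof.
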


\begin{figure}[h]
	\begin{overpic}[scale=.4]{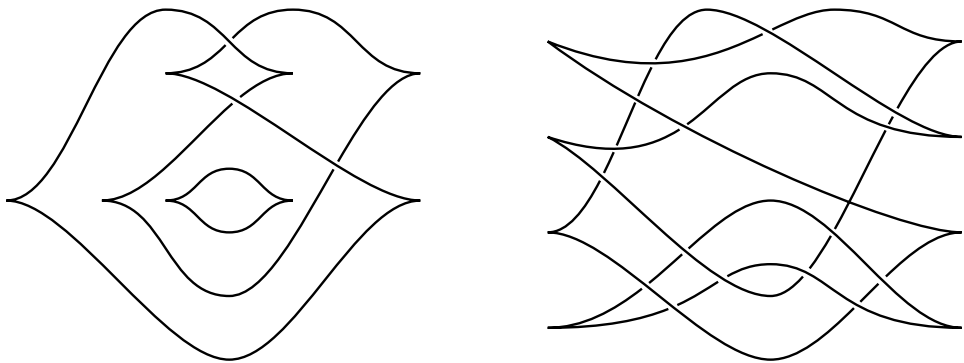}
	\end{overpic}
	\caption{The two-component Legendrian link on the left is modified by Reidemeister moves to obtain the link on the right whose Lagrangian resolution is left-right-simple.}
	\label{Fig:Simplification}
\end{figure}

Theorem \ref{Thm:Main} utilizes a new notion of simplicity for Legendrian links.

\begin{defn}\label{Def:LRS}
Let $\Lambda$ be a Legendrian link in good position (Definition \ref{Def:GoodPosition}) and such that $x|_{\Lambda}$ is Morse.
\be
\item $\Lambda$ is \emph{left-simple} is there exists $x_{L} \in \R$ for which each local minimum of $x|_{\Lambda}$ has $x$-value $x_{L}$.
\item $\Lambda$ is \emph{right-simple} is there exists $x_{R} \in \R$ for which each local maximum of $x|_{\Lambda}$ has $x$-value $x_{R}$.
\item $\Lambda$ is \emph{left-right-simple} if it is both left simple and right simple.
\ee
\end{defn}

Starting from a front projection, any $\Lambda$ can be made left-right-simple by applying type-II Reidemeister moves so that the front is plat and then applying a Lagrangian resolution as described in \cite{Ng:ComputableInvariants}.\footnote{A front diagram is \emph{plat} if all left-pointing cusps have the same $x$ coordinate and all right-pointing cusps have the same $x$ coordinate. Plat diagrams are of common use in the $LCH$ literature, cf. \cite{Sabloff:AugRuling}.} An example is worked out in Figures \ref{Fig:Simplification} and \ref{Fig:SimplificationLag}. Our proof of Theorem \ref{Thm:Main} principally relies on index calculations appearing in Theorem \ref{Thm:BranchIndex} (broadly applicable) and Theorem \ref{Thm:CritIndex} (specific to the left-right-simple setting).

\begin{figure}[h]
	\begin{overpic}[scale=.4]{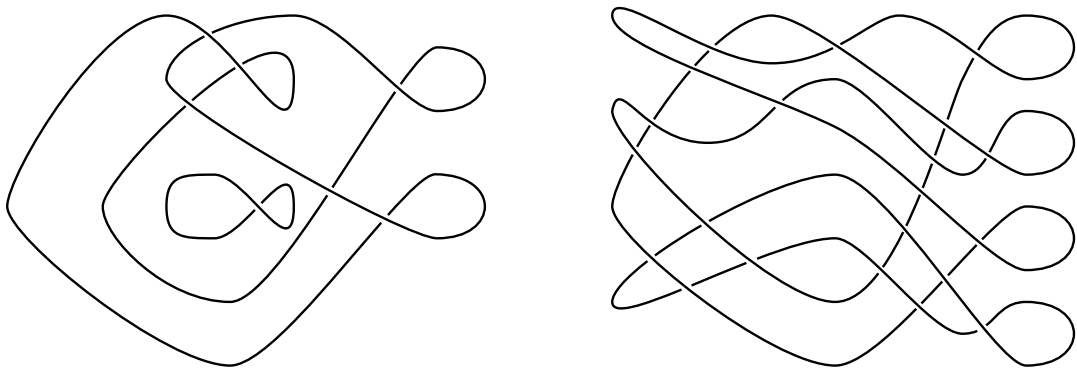}
	\end{overpic}
	\caption{Lagrangian resolutions of the fronts shown in Figure \ref{Fig:Simplification}.}
	\label{Fig:SimplificationLag}
\end{figure}

Only moduli spaces of Fredholm index $\leq 2$ holomorphic maps need to be considered for the development of typical $\SFT$ invariants. From the first statement of the above theorem we immediately obtain the following meta-result:

\begin{cor}\label{Cor:LSFT}
Any definition of $LSFT$ which counts rigid curves on (possibly multiple copies of)\footnote{See Section \ref{Sec:NCopy} for details on the $n$-copy construction.} $\Lambda$ is combinatorially computable by counting rigid disks with at most two positive punctures.
\end{cor}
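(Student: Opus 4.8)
The plan is to combine the first part of Theorem~\ref{Thm:Main} with the classical dictionary between embedded holomorphic disks and combinatorial polygons that underlies the constructions of $LCH$ and $RSFT$. First I would record what is actually used about $LSFT$: whatever its precise packaging (a DGA differential, $L_\infty$ or $A_\infty$ operations, curve counts with arbitrarily many positive punctures, counts on $n$-copies, and so on), the structure maps of $LSFT(\Lambda)$ are finite $\Z$- or $\Z/2$-linear combinations of signed counts of \emph{rigid} holomorphic curves $U\colon\dot\Sigma\to\R\times\R^{3}$ with $U(\partial\dot\Sigma)\subset\R\times\Lambda$, where ``rigid'' means Fredholm index $1$, counted modulo translation in the $s$-coordinate. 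Nothing else about the hypothetical definition enters the argument.

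Next I would invoke Theorem~\ref{Thm:Main}(1): after a Legendrian isotopy $\Lambda$ is left-right-simple, and then every rigid $U$ is a disk $\dot\Sigma=\dot{D}$ with one or two positive punctures for which $\pi_{\C}\circ U$ is an embedding, and there are only finitely many of these up to reparameterization and $s$-translation. So every structure coefficient of $LSFT(\Lambda)$ is a finite signed count of such disks. It then remains to observe that these disks are honestly combinatorial objects: since $\pi_{\C}\circ U\colon\dot{D}\to\C$ is a holomorphic embedding with boundary on $\lambda=\pi_{\C}(\Lambda)$ and punctures asymptotic to crossings of $\lambda$, its image is an embedded polygon in $\C$ bounded by arcs of $\lambda$ with corners at crossings of $\lambda$ along which the puncture data is prescribed; conversely, by the Riemann mapping theorem together with the analysis of Etnyre, Ng and Sullivan \cite[Section 7]{ENS:Orientations}---the same input that identifies Chekanov's combinatorial $LCH$ with the analytic one---each such polygon is the image of a unique rigid holomorphic disk, with orientation sign determined combinatorially from $\lambda$. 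Hence the count of rigid disks with $\le 2$ positive punctures is computed directly from $\lambda\subset\C$, which proves the corollary for $\Lambda$ itself.

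Finally I would dispense with the $n$-copy clause. The $n$-copy $\Lambda^{(n)}$ is built from $n$ parallel push-offs of $\Lambda$ in a small neighborhood; starting from a plat front of $\Lambda$ one obtains a plat front of $\Lambda^{(n)}$ by replacing each strand with $n$ nearby parallel strands and each cusp with $n$ nested cusps, and its Lagrangian resolution is again left-right-simple after an arbitrarily small further isotopy into good position if needed. Thus Theorem~\ref{Thm:Main}(1) applies verbatim to $\Lambda^{(n)}$, and the previous paragraph shows that any $LSFT$-type invariant built from rigid-curve counts on $\Lambda^{(n)}$ is computed by counting rigid disks with $\le 2$ positive punctures there, hence combinatorially from $\pi_{\C}(\Lambda^{(n)})$.

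The main obstacle I anticipate is bookkeeping rather than geometry: one must ensure that the Legendrian-isotopy invariance used to replace $\Lambda$ by its left-right-simple representative is part of whichever definition of $LSFT$ is adopted, and that the sign conventions orienting the rigid disks are the ones the hypothetical invariant uses. Both are automatic for anything deserving the name---invariance is a defining requirement, and coherent orientations of rigid moduli spaces are pinned down by capping-operator/ENS-type conventions. It is also worth noting that Theorem~\ref{Thm:Main}(2), which says $\ind \le 2$ curves are simple and without interior critical points, guarantees that transversality holds for the standard $J$ with no abstract perturbation, so the combinatorial count is the honest one and the rigid moduli spaces in index $1$ are genuinely cut out transversally.
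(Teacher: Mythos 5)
Your argument is correct and follows essentially the same route as the paper, which derives the corollary immediately from Theorem \ref{Thm:Main}(1) together with the standard Riemann-mapping/\cite{ENS:Orientations} dictionary between embedded rigid disks and polygons in $\pi_{\C}(\Lambda)$, and with the left-right-simple $n$-copy construction of Section \ref{Sec:NCopy}. Your write-up simply makes explicit the steps the paper leaves as immediate.
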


\subsection{Curve counting}\label{Sec:CurveCountingIntro}

We describe our setup for curve counting and address the (im)possibility of counting $\chi(\dot{\Sigma}) < 1$ curves without a geometric constraint such as the left-right-simple condition.

\subsubsection{The obstruction function}

Following the analytical strategies of \cite{DR:Lifting, ENS:Orientations}, we obtain the following commutative diagram, which summarizes much of the content of Section \ref{Sec:ModSpaces}:
\begin{equation}\label{Eq:Lifting}
\begin{tikzcd}
\ModSpace^{\Lambda}/\R \arrow[r] \arrow[hookrightarrow, d] & 0 \arrow[hookrightarrow, d] \\
\ModSpace^{\lambda} \arrow[r, "\obstruction"] & H^{1}(\Sigma, \R)
\end{tikzcd}
\end{equation}

In words, the diagram says that we can identify the (reduced) moduli spaces $\ModSpace^{\Lambda}/\R$ of holomorphic curves in $\R \times \R \times \C$ with boundary on $\R \times \Lambda$ as the zero-locus, $\obstruction^{-1}(0)$, of an \emph{obstruction function}, $\obstruction$, from a moduli space $\ModSpace^{\lambda}$ of holomorphic curves in $\C$ with boundary on $\lambda$ to the first cohomology group of the domain, $\Sigma$. Here $\R$ acts on $\ModSpace^{\Lambda}$ by translating curves in the $s$-direction and we are allowing the topology of $\Sigma$ to vary across connected components of the moduli spaces. 

Of course $\obstruction$ always vanishes in the case $\Sigma = \disk$, yielding the known ``lifting'' results, \cite[Theorem 2.1]{DR:Lifting} and \cite[Theorem 7.7]{ENS:Orientations}. In the case $\Sigma \neq \disk$, we can view Equation \eqref{Eq:Lifting} as setting up a typically non-trivial obstruction bundle problem. For simplicity, we consider moduli spaces of parameterized maps. See Section \ref{Sec:Orbibundle} for further commentary on unparameterized curves.

\subsubsection{Curve counting difficulties}\label{Sec:CountDifficulties}

For $\Sigma$ with more general topology, Equation \eqref{Eq:Lifting} tells us that if $\ModSpace^{\Lambda}/\R$ has expected dimension $0$, then $\ModSpace^{\lambda}$ will have expected dimension
\begin{equation}\label{Eq:ExpdimIntro}
\expdim \ModSpace^{\lambda} = \dim H^{1}(\Sigma, \R) = 1 - \chi(\Sigma).
\end{equation}
For example, when $\Sigma$ is an annulus, then we want to study $\dim \ModSpace^{\lambda} = 1$ moduli spaces whose elements are not rigid curves. These are the dimensions required to count curves contributing to a $LSFT$ differential.

In such a situation, we would like to be able count points in $\mathcal{O}^{-1}(0)$ only using the data of the Lagrangian projection of $\Lambda$. The analysis of Section \ref{Sec:AnnuliExamples} demonstrations that this is not possible in general. However, the left-right-simple condition ensures that Equation \eqref{Eq:ExpdimIntro} cannot be achieved.

\subsection{Related work and additional context}

\subsubsection{Perturbed $\delbar$ equations}

Our approach to studying $\obstruction$ is via analysis of holomorphic curves which are perturbed in the sense of \cite{Abbas:JBook, ACH:PlanarWeinstein}. Contrasting with Doicu and Fuchs' \cite{DF:HCompactness}, our perturbation terms are unbounded, and this unboundedness is leveraged to count curves in special cases as previously mentioned. An ongoing program of Oh and Wang \cite{Oh:Instanton} describes solutions to generalization of these perturbed holomorphic curve equations.

\subsubsection{More on $LSFT$}

This article is partly motivated by recent work of Ekholm and Ng: In \cite{EkholmNg:LSFT}, a definition of Legendrian SFT is proposed in the context of knot contact homology, in which case counts of curves with $\chi(\Sigma) < 1$ may be reduced to counts of holomorphic disks via a recursion argument.

Although Theorem \ref{Thm:Main} may aid in computation, it does not \emph{apriori} assist in defining (some version of) $LSFT$, proving that such an algebraic object is invariant under Legendrian isotopy, or in addressing aspects of functoriality \cite{EHK:LagrangianCobordisms}. Analytical proofs of the invariance of $LSFT$ would require analysis of holomorphic curves on Lagrangian cobordisms. Likewise, combinatorial approaches to invariance should require violation of the left-right-simple condition. We do not address non-trivial Lagrangian cobordisms or algebraic aspects of $LSFT$ in this article.

\subsubsection{Heegaard-Floer invariants}

According to Lipshitz \cite{Lipshitz:Cylindrical}, the Heegaard-Floer invariants $\widehat{HF}(Y)$ \cite{OS:HF} of a smooth $3$-manifold, $Y$, count holomorphic curves in almost complex $4$-manifolds of the form $\R \times [0, 1] \times S$ for a surface $S$ with boundaries mapped to non-compact Lagrangian submanifolds. As in the case of $LSFT$ there are no restrictions on the topological types of the domains of curves. Therefore we view left-right-simple links as being analogous to Sarkar and Wang's \emph{nice Heegaard diagrams} \cite{SarkarWang}, and Theorem \ref{Thm:Main}(1) as being analogous to their \cite[Theorem 1.1]{SarkarWang} which reduces $\widehat{HF}(Y)$ curve counts to combinatorial counts of holomorphic disks in the Riemann surface $S$ with at most $2$ positive punctures. 

Our approach to analyzing $\ModSpace^{\Lambda}/\R$ described in Sections \ref{Sec:ModSpaces} and \ref{Sec:AnnuliExamples} can similarly be applied to $\widehat{HF}$ curve counts associated to non-nice Heegaard diagrams. This approach appears impractical for general computation but is useful in some restricted scenarios. The technique is used to count annuli in Ozv\'{a}th and Szab\'{o}'s original paper \cite[Section 9]{OS:HF} and in unpublished work of Pardon \cite{Pardon:HFLecture}.

\subsubsection{Replacing $\C$ with an arbitrary Liouville domain}

It would be interesting to know if an analogue of Theorem \ref{Thm:Main} exists for (perhaps topologically restricted classes of) Legendrian submanifolds in higher-dimensional contactizations of Liouville domains.\footnote{Our ambient space, $\Rthree$, may be viewed as the contactization of $T^{\ast}\R$.} The results of Section \ref{Sec:ModSpaces} readily generalize to holomorphic curves in symplectizations of contactizations of arbitrary Liouville domains in parallel with 
\be
\item Dimitroglou Rizell's \cite{DR:Lifting} generalizing \cite[Section 7]{ENS:Orientations} or
\item Colin, Honda, and Tian's construction of $\widehat{HF}$ for higher-dimensional manifolds \cite{CHT:HF}.
\ee

The story told in Section \ref{Sec:CurveCountingIntro} may be applied without modification to attempts at counting holomorphic curves ``by hand'' in any of the above generalized contexts just as it applies to $\widehat{HF}$. Hence a Sarkar-Wang-style result such as Theorem \ref{Thm:Main} may in general be necessary to eliminate rigid $\chi(\dot{\Sigma}) < 1$ curves, making combinatorially techniques such as Morse flow trees \cite{Ekholm:FlowTrees} applicable. Our proof of Theorem \ref{Thm:Main} relies on specifically low-dimensional methods -- combinatorial topology of curves on surfaces.

\subsection{Organization of this article}

In Section \ref{Sec:SimpleNotions} we cover generalities regarding Legendrian links in $\Rthree$. In Section \ref{Sec:ModSpaces} we define the moduli spaces of holomorphic curves relevant to this paper and elaborate on Equation \eqref{Eq:Lifting}. Section \ref{Sec:Index} describes general index formula for holomorphic maps to $\C$ with boundary on an immersed multi-curve. Examples are explored in which the map $\obstruction$ is used to combinatorially count holomorphic annuli in Section \ref{Sec:AnnuliExamples}. Finally, Section \ref{Sec:MainResults} is dedicated to the proof of Theorem \ref{Thm:Main}.

\subsection{Acknowledgments}

The main ideas for this article were conceived in conversations with Tobias Ekholm. We're also grateful for interesting discussions with Erkao Bao, Georgios Dimitroglou Rizell, Sam Lisi, and Lenhard Ng. We also thank Paolo Ghiggini for his lectures on $\widehat{HF}$ during his stay at Uppsala in 2021 and John Pardon for his correspondence. Finally, thanks to our anonymous referee for their detailed commentary which has helped to improve this article.

\section{Background and notions of simplicity for Legendrian links}\label{Sec:SimpleNotions}

In this section we describe Legendrian links in $\Rthree$ and explore various ways in which the geometry of such links may be restricted for the purposes of simplifying computations. In particular, the left-right-simple condition is described in Definition \ref{Def:LRS}.

\subsection{Basic notions}

Throughout, $\Lambda$ will denote a Legendrian link in $\Rthree$ and $\lambda$ will denote an immersed $1$-dimensional submanifold of $\C$ with only transverse self-intersections. Unless otherwise specified, we will take $\lambda = \pi_{\C}(\Lambda)$.

A \emph{chord of $\Lambda$} is a compact, connected $\partial_{t}$ trajectory in $\R^{3}$ which both begins and ends on $\Lambda$. Chords are in bijective correspondence with the double points of $\lambda$. The \emph{action}, $\action(\chord)$ of a chord $\chord = \chord(t)$ is defined
\begin{equation*}
\action(\chord) = \int_{\chord} dt.
\end{equation*}

Throughout we will use $\eta = \eta(T)$ to denote either an oriented path in $\lambda \subset \C$ which begins and ends at double points of $\lambda$. For such a path $\eta$ with domain $I \subset \R$, the \emph{rotation angle}, denoted $\theta(\eta) \in \R$, is defined as the integral of the Gauss map $G:\lambda \rightarrow \R / 2\pi\Z$ pulled back to an interval by $\eta$:
\begin{equation*}
\begin{tikzcd}
I \arrow[r, "\widetilde{G}"]\arrow[d, "\eta"] & \R \arrow[d]\\
\lambda \arrow[r, "G"] &  \R / 2\pi\Z
\end{tikzcd},\quad \theta(\eta) = \int_{I} \widetilde{G}dT.
\end{equation*}

\subsection{Restrictions on Lagrangian projections}

The following condition -- originally appearing in \cite{Avdek:Dynamics} -- will help us to simplify calculations of rotation angles paths in $\Lambda$. Similar conditions constraining the geometry of Legendrians near endpoints of chords appear in \cite{BEE:LegendrianSurgery, DR:Lifting, EES:LegendriansInR2nPlus1}.

\begin{defn}\label{Def:GoodPosition}
We say that an immersed multicurve $\lambda \subset \C$ is in \emph{good position} if each self-intersection $(x_{0},y_{0})\in\C$ is transverse and the there exists a neighborhood about the point within which
\be
\item one strand of $\lambda$ admits a parameterization satisfying $(x, y)(q) = (x_{0} + q, y_{0} - q)$ and
\item the other strand admits a parameterization satisfying $(x, y)(q) = (x_{0} + q, y_{0} + q)$.
\ee
We say that a Legendrian link $\Lambda \subset \Rthree$ is in \emph{good position} if $\pi_{\C}(\Lambda)$ is in good position and the first strand above corresponds to the one with greater $t$-value.
\end{defn}

Good position guarantees that the Gauss map of a parameterization of $\lambda$ evaluates to $\frac{3\pi}{4}$ or $\frac{7\pi}{4}$ near an over-crossing and to $\frac{\pi}{4}$ or $\frac{5\pi}{4}$ near an under-crossing. Likewise, the rotation angles of paths $\eta$ ending on double points of a multicurve in good position satisfy 
\begin{equation}\label{Eq:SimpleRotationAngle}
\theta(\eta) \in \frac{\pi}{2}\Z.
\end{equation}

\subsection{Lagrangian resolution}

The following proposition is a slight modification of \cite[Proposition 2.2]{Ng:ComputableInvariants} appearing in \cite{Avdek:Dynamics}:

\begin{prop}
Provided a front projection of a Legendrian link $\Lambda$, we may perform a Legendrian isotopy of so that it is in good position and so that away from double points the Lagrangian diagram is obtained by resolving singularities of the front as depicted in Figure \ref{Fig:LagrangianResolution}.
\end{prop}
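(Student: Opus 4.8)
The plan is to reduce the statement to the Lagrangian resolution of \cite[Prop.~2.2]{Ng:ComputableInvariants}, while choosing the local models used in the resolution so that the good-position normalization of Definition~\ref{Def:GoodPosition} holds on the nose. As a preliminary step I would apply a $C^{\infty}$-small Legendrian isotopy putting the front projection of $\Lambda$ into generic normal form: all cusps are standard semicubical cusps, all crossings are transverse and disjoint from the cusps, and the front is a graph $t = f(x)$ away from the cusps. Such a perturbation is front-generic and is realized by a Legendrian isotopy.

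Next I would invoke the resolution itself. Away from the cusps, the Lagrangian projection of the tautological lift of the front is the graph $y = f'(x)$ of the slope, which is automatically immersed; near a left cusp one smooths the two branches together; near a right cusp one replaces the cusp by a small loop introducing one double point; near a front crossing one resolves the two branches --- which carry distinct slopes, hence distinct $y$-values --- into a single transverse double point of $\lambda$, with a prescribed choice of over- and under-strand. The resulting immersed multicurve is $\pi_{\C}$ of a Legendrian $\Lambda'$, and $\Lambda \rightsquigarrow \Lambda'$ is a Legendrian isotopy, supported near the cusps and crossings and expressible as a concatenation of front moves; this is the content of the cited proposition together with \cite{Avdek:Dynamics}. (If one insists on realizing $\lambda$ as $\pi_{\C}$ of a link in $\Rthree$ rather than in $S^{3}$ or $J^{1}\R$, the per-component condition $\oint y\,dx = 0$ is inherited from the original Legendrian, or can be restored by a $C^{0}$-small isotopy supported away from the double points.)

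The new ingredient is to arrange that each of the finitely many local models used at double points is a good-position model. Near each double point $(x_{0}, y_{0})$ of $\lambda$ --- coming either from a former crossing or from the loop at a former right cusp --- write the two branches as graphs $y = g_{i}(x)$, $i = 1, 2$, with branch $1$ the strand of greater $t$-value; by construction of the resolution the over-strand is the less steep branch at the double point, i.e. $g_{1}'(x_{0}) < g_{2}'(x_{0})$. A further local Legendrian isotopy, supported in a small ball about $(x_{0}, y_{0})$ and obtained by bending the front (a $C^{1}$-small, hence Legendrian, front isotopy), brings the branches to $g_{1}'(x_{0}) = -1$, $g_{2}'(x_{0}) = +1$ with $g_{1}$ and $g_{2}$ affine in $x$ near $x_{0}$ --- so locally the branches are the straight segments of slopes $\mp 1$ demanded by Definition~\ref{Def:GoodPosition} --- while outside the ball one interpolates back to the graph of $f'$. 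Since the inequality $g_{1}' < g_{2}'$ is preserved throughout the bending, no double point is created or destroyed, and the over-strand remains the branch of slope $-1$. These isotopies at distinct double points have disjoint support, so they may be performed simultaneously, and the result is in good position by construction.

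I expect the only real obstacle to be the bookkeeping in this last step: checking, case by case against the local pictures in Figure~\ref{Fig:LagrangianResolution} (the resolved crossing and the right-cusp loop), that the designated over-strand is genuinely the branch with the smaller slope at the double point, so that it can be normalized to slope $-1$ without ever sweeping through a tangency that would reverse the crossing. This is a short local computation; the remainder is a routine partition-of-unity interpolation, together with the standard facts that front-generic perturbations and the resolution moves are Legendrian isotopies.
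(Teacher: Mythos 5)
Your proposal is correct and follows essentially the route the paper itself takes: the paper gives no independent argument, deferring—exactly as you do—to the resolution of \cite[Proposition 2.2]{Ng:ComputableInvariants} and its good-position refinement in \cite{Avdek:Dynamics}, with the only additional content being the local normalization of each double point to slopes $\mp 1$ by a compactly supported Legendrian isotopy. The case check you defer (that at each resolved crossing the strand of greater $t$-value is the one that can be taken along the slope $-1$ direction) is indeed the one substantive verification, and it is precisely the crossing convention recorded in Section \ref{Sec:OvercrossingConvention}.
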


\begin{figure}[h]\begin{overpic}[scale=.4]{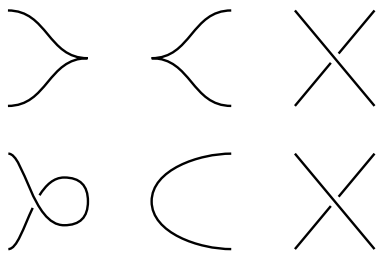}
	\end{overpic}
	\caption{The first row of subfigures shows segments of a Legendrian link appearing in the front projection. Directly below each subfigure is the corresponding local picture in the Lagrangian resolution.}
	\label{Fig:LagrangianResolution}
\end{figure}

We say that a Lagrangian projection obtained from a front as described in the above proposition is the \emph{Lagrangian resolution of the front}.

\subsection{Assumptions and conventions}\label{Sec:OvercrossingConvention}

Throughout the remainder of this article it will be assumed that every Lagrangian projection of a Legendrian knot is obtained by a Lagrangian resolution of a front diagram unless otherwise specified. Consequently, for the crossings appearing in both front and Lagrangian projections, we require that the ``north-west to south-east'' strand lies above the ``south-west to north-east'' strand of the link. With this simplification at hand, crossings in both the front- and Lagrangian projections will henceforth be drawn as ``X''s without ambiguity.

\subsection{Compatibility with the $n$-copy construction}\label{Sec:NCopy}

We note that left-right-simplicity is compatible with the satellite construction of \cite{NT:TorusLinks} when the satellite pattern is a positive braid. For the sake of brevity we only discuss the $n$-copy construction here, which is particularly important in the study of Legendrian links. For example, the $n$-copy is used to define product structures on linearized $LCH$ in \cite{BEE:Product, BC:Bilinearized} and $RSFT$ invariants of Legendrian links in \cite{Ekholm:Z2RSFT}.

For $n \in \N$, the \emph{$n$-copy} of Legendrian link, $\Lambda$, is defined as the Legendrian isotopy class of the link
\begin{equation*}
\Lambda^{n} = \sqcup_{k= 0}^{n-1} \Lambda_{k},\quad \Lambda_{k} = \Flow^{\frac{k}{n}\epsilon}_{\partial t}(\Lambda)
\end{equation*}
for $\epsilon > 0$ arbitrarily small, whence the Legendrian isotopy class of $\Lambda^{n}$ is independent of $\epsilon$.

\begin{figure}[h]
	\begin{overpic}[scale=.4]{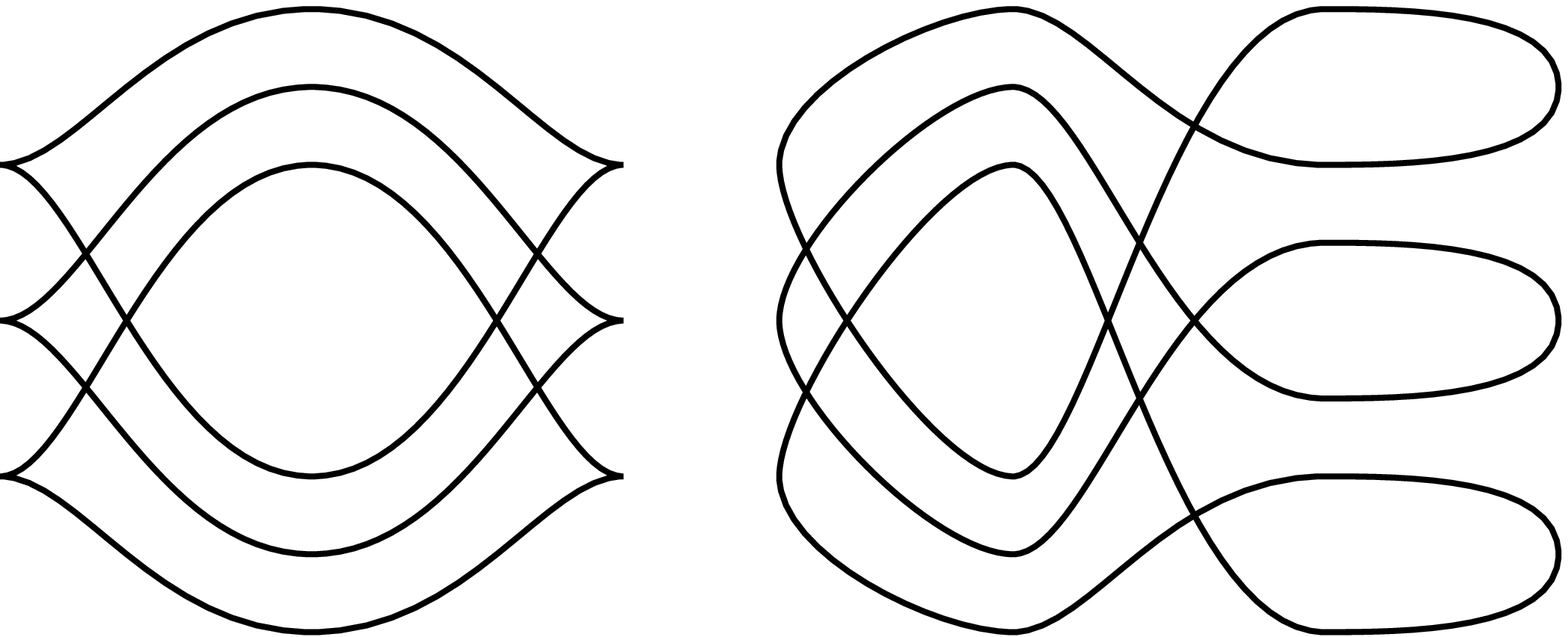}
	\end{overpic}
	\caption{A $3$-copy of a Legendrian unknot in the front and Lagrangian projections.}
	\label{Fig:UnknotThreeCopy}
\end{figure}

One obvious way to make $\Lambda^{n}$ left-right-simple is to perturb the front projection of $\Lambda$ to a plat closure as shown in Figure \ref{Fig:Simplification}, take the $n$-copy, and then apply the Lagrangian resolution. See Figure \ref{Fig:UnknotThreeCopy} for an example.

\begin{figure}[h]
\begin{overpic}[scale=.4]{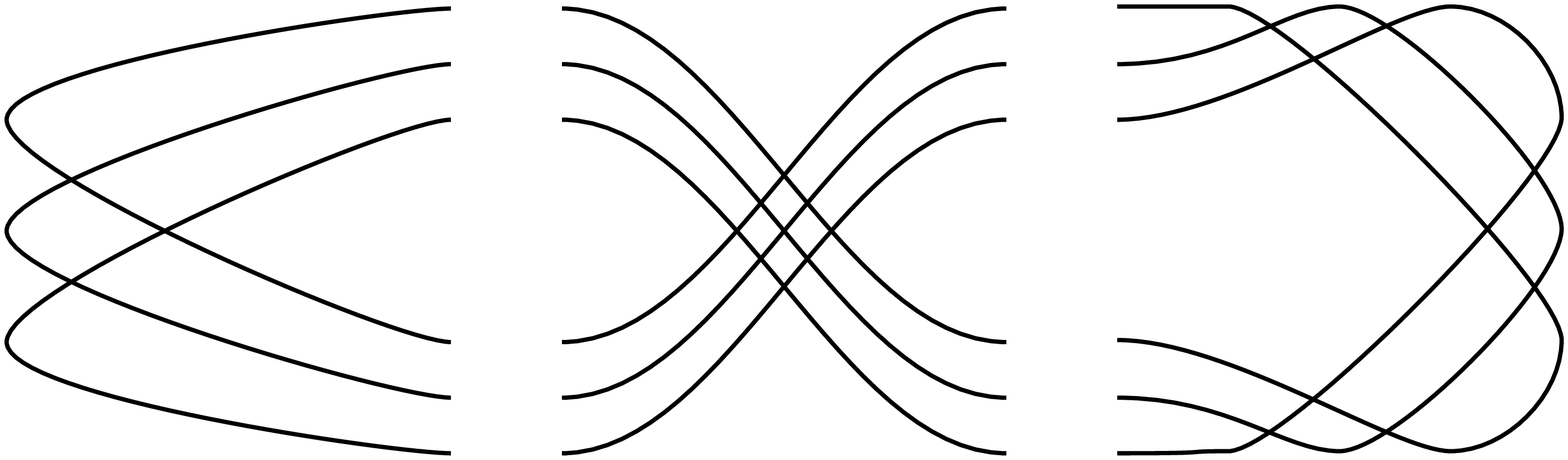}
\put(31, 28){$\Lambda_{2}$}
\put(31, 24){$\Lambda_{1}$}
\put(31, 20){$\Lambda_{0}$}
\put(66, 28){$\Lambda_{2}$}
\put(66, 24){$\Lambda_{1}$}
\put(66, 20){$\Lambda_{0}$}
\end{overpic}
\caption{Drawing an $n$-copy of a Lagrangian resolution in the Lagrangian projection for $n=3$. From left-to-right, the required modifications are depicted near the resolution of a left-pointing cusp, a crossing, and a left-pointing cusp.}
\label{Fig:NCopySchematic}
\end{figure}

Let $\Cfancy_{i, j}$ denote the chords of $\Lambda^{n}$ which start on $\Lambda_{i}$ and end on $\Lambda_{j}$. In \cite{BC:Bilinearized, BEE:Product, Ekholm:Z2RSFT} the authors apply Legendrian isotopies to $\Lambda^{n}$ to ensure chord genericity, requiring that the $\Lambda_{k}$ are close enough to the original $\Lambda$ so that
\be
\item when $i \geq j$, $\Cfancy_{i, j}$ is naturally identified with the chords of $\Lambda$.
\item when $i < j$, $\Cfancy_{i, j}$ is naturally identified with a union of the chords of $\Lambda$ together with critical points of a Morse function on $\Lambda$.
\ee

A method for producing a left-right-simple $n$-copy which meets these criteria from a left-right-simple $\Lambda$ is shown in Figure \ref{Fig:NCopySchematic}. This method does not rely on our assumption that the Lagrangian projection of $\Lambda$ is obtained by a Lagrangian resolution of a front. Note that patching the columns of Figure \ref{Fig:NCopySchematic} in the obvious fashion will produce a $3$-copy of the Legendrian unknot, Legendrian isotopic to the one shown in Figure \ref{Fig:UnknotThreeCopy}. The chords determined by the maxima and minima of $x|_{\Lambda}$ will correspond to the critical points of a Morse function on $\Lambda$ having $2\#(\Crit(x|_{\Lambda}))$ critical points.

\section{Moduli spaces and cohomological obstruction}\label{Sec:ModSpaces}

In this section we describe the moduli spaces of interest in this article. Our main result here is Theorem \ref{Thm:ObstructionCount} which justifies Equation \eqref{Eq:Lifting}. This section closes with a description of some technical subtleties.

As mentioned in the introduction, the definitions and results of this section may easily be extended to the case of Lagrangian cylinders over Legendrian submanifolds inside of symplectizations of contactizations of Liouville domains in the spirit of \cite[Theorem 2.1]{DR:Lifting}.

\subsection{Specification of $J$ on $\R^{4}$}\label{Sec:SpecifyJ}

We use $J_{0}$ to denote the standard complex structure on $\C$. As mentioned in the introduction, we always work with the standard complex structure $J = J_{0} \times J_{0}$ on $\R_{s} \times \R_{t} \times \C_{x, y} \simeq \C_{s, t} \times \C_{x, y}$ given by Equation \eqref{Eq:Jstd}. While $J$ does not preserve $\xi_{std}$, it is $\alpha_{std}$-tame in the sense of \cite{BH:ContactDefinition} and is so suitable for SFT computations. This standard $J$ is of common use in the Legendrian contact homology literature, cf. \cite{DR:Lifting}.

The results of this paper are equally valid using the $\alpha_{std}$-adapted almost complex structure $J'$ on $\R \times \R^{3}$ defined
\begin{equation*}
J'\partial_{s} = \partial_{t},\quad J' (v + ydx(v)\partial_{t}) = J_{0}v + ydx(J_{0}v)\partial_{t},\quad v \in T\C.
\end{equation*}
See, for example, \cite[Section 7]{ENS:Orientations} and \cite[Section 11.1]{Avdek:Dynamics}. Both $J$ and $J'$ are compatible with the symplectic form $d(e^{s}\alpha_{std})$, so that all Lagrangian submanifolds are totally real for either choice of almost complex structure.

\subsection{Riemann surfaces}

Throughout this article $\Sigma$ will denote a compact, connected, oriented surface with non-empty boundary. We write $H_{1}(\Sigma)$ ($H^{1}(\Sigma)$) for the associated first homology (cohomology) group with coefficients in $\R$. In this article we will frequently encounter $H^{1}$ elements as being determined by harmonic $1$-forms. It will be useful -- especially in Section \ref{Sec:AnnuliExamples} -- to think of $H^{1}$ as the dual space
\begin{equation*}
H^{1}(\Sigma) \simeq \Hom(H_{1}(\Sigma), \R)
\end{equation*}
whose elements are uniquely determined by their integrals over closed $1$-cycles. From this perspective, a choice of basis of homology of $\Sigma$ determines a dual basis of $H^{1}(\Sigma)$.

We write $\fatSigma = (\Sigma, j, \{ p_{i} \})$ for a compact, connected surface $\Sigma$ with labeled boundary components
\begin{equation*}
\partial \Sigma_{k},\quad k=1,\dots,\#(\partial \Sigma),
\end{equation*}
complex structure $j$, and a non-empty collection of marked points $p_{i}$ contained in $\partial \Sigma$. We say that such a triple $(\Sigma, j, \{ p_{i} \})$ is a \emph{decorated Riemann surface}. 

We say that two decorated Riemann surfaces $\fatSigma = (\Sigma, j, \{ p_{i} \})$ and $\widetilde{\fatSigma} = (\widetilde{\Sigma}, \widetilde{j}, \{ \widetilde{p_{i}} \})$ are \emph{isomorphic} if $\#(\partial \Sigma_{k}) = \#(\partial \widetilde{\Sigma}_{k})$, $\#(p_{i}) = \#(\widetilde{p}_{i})$, and there is a $(j, \widetilde{j})$ holomorphic diffeomorphism $\Sigma \rightarrow \widetilde{\Sigma}$ mapping each $\partial \Sigma_{k}$ to $\partial \widetilde{\Sigma}_{k}$ and each $p_{i}$ to $\widetilde{p}_{i}$. We say that $\fatSigma$ and $\widetilde{\fatSigma}$ are \emph{isotopic} if such a diffeomorphism exists which is isotopic to the identity in $\Diff(\Sigma)$.

We write $\dot{\Sigma}$ for $\Sigma$ with all of its marked points removed, $\partial \dot{\Sigma}_{k}$ for the $k$th boundary component of $\Sigma$ with the $p_{i}$ removed, and $\partial \dot{\Sigma}$ for the union of the $\partial \dot{\Sigma}_{k}$. The following lemma summarizes some results of Sections 3 and 4 of \cite{Liu:Moduli}:

\begin{lemma}\label{Lemma:SurfaceModSpaceDim}
We say that the triple $(\Sigma, j, \{ p_{i}\} \} )$ is \emph{stable} if $\Aut(\dot{\Sigma}, j)$ is finite. This condition is equivalent to the condition that 
\begin{equation*}
2\chi(\Sigma) - \#(p_{i})< 0.
\end{equation*}
Assuming stability, the moduli space of isotopy classes of decorated Riemann surfaces $\boldsymbol{\Sigma}$ is a manifold of dimension
\begin{equation*}
\#(p_{i}) - 3\chi(\Sigma) = \#(p_{i}) + 3\#(\partial \Sigma) + 6g(\Sigma) - 6.
\end{equation*}
\end{lemma}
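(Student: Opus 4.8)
The plan is to transfer the whole question to the classical Teichm\"uller theory of closed punctured Riemann surfaces by passing to the Schottky double, and then to read off both assertions from the theory of anti-holomorphically symmetric (``real'') Riemann surfaces, which is the point of view of Sections 3 and 4 of \cite{Liu:Moduli}. Write $g = g(\Sigma)$, $b = \#(\partial \Sigma)$, $n = \#(p_{i})$, and let $D\Sigma$ be the Schottky double of $(\Sigma, j)$: two copies of $(\Sigma,j)$ glued along $\partial\Sigma$, with complex structures matched by Schwarz reflection. Then $D\Sigma$ is a closed Riemann surface carrying an anti-holomorphic involution $\sigma$ with $\mathrm{Fix}(\sigma) = \partial\Sigma$; since $\chi(\partial\Sigma)=0$ we get $\chi(D\Sigma) = 2\chi(\Sigma)$, i.e.\ genus $\hat g := 2g+b-1$. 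Each boundary marked point $p_{i}$ lies on $\mathrm{Fix}(\sigma)$ and, since locally a half-disk minus a boundary point doubles to a once-punctured disk, becomes a single $\sigma$-fixed puncture; deleting the $n$ of them yields $D\dot\Sigma$ with $\chi(D\dot\Sigma) = 2\chi(\Sigma)-n$. Finally, any biholomorphism of $\dot\Sigma$ preserves $\partial\dot\Sigma$, hence extends across $\mathrm{Fix}(\sigma)$ by Schwarz reflection to a $\sigma$-equivariant biholomorphism of $D\dot\Sigma$, identifying $\Aut(\dot\Sigma,j)$ with the $\sigma$-equivariant automorphism group of $D\dot\Sigma$.

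For the stability criterion: if $2\chi(\Sigma)-n<0$ then $D\dot\Sigma$ is hyperbolic and carries a unique complete finite-area hyperbolic metric, which is therefore $\sigma$-invariant; its isometry group is finite and contains $\Aut(\dot\Sigma,j)$ by the previous paragraph. Conversely, if $2\chi(\Sigma)-n\geq 0$, then because $\chi(\Sigma)\leq 1$ and $n\geq 1$ the only option is $\chi(\Sigma)=1$, i.e.\ $\Sigma=\disk$, with $n\in\{1,2\}$; and the disk with one (resp.\ two) boundary marked points is biholomorphic to the closed upper half-plane marked at $\infty$ (resp.\ at $0$ and $\infty$), with automorphism group $\{z\mapsto az+b:a>0\}$ (resp.\ $\{z\mapsto az:a>0\}$), which is infinite. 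This proves $\Aut(\dot\Sigma,j)$ finite $\iff 2\chi(\Sigma)-n<0$.

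For the dimension, assume stability. The involution $\sigma$ induces an anti-holomorphic involution $\sigma_{\ast}$ on the Teichm\"uller space $\mathcal{T}_{\hat g,n}$ of genus-$\hat g$ curves with $n$ marked points, a complex manifold of complex dimension $3\hat g-3+n$; its fixed locus contains the class of the double of $\fatSigma$, hence is a nonempty totally real submanifold of real dimension $3\hat g-3+n$. A point of $\mathrm{Fix}(\sigma_{\ast})$ is a $\sigma$-invariant marked complex structure; since $\mathrm{Fix}(\sigma)$ stays totally geodesic for any such structure, quotienting by $\sigma$ produces a bordered Riemann surface of type $(g,b)$ with $n$ boundary marked points, and this is a bijection between $\mathrm{Fix}(\sigma_{\ast})$ and the moduli space of isotopy classes of decorated surfaces with the topological type of $\fatSigma$ and the given distribution of marked points among boundary components. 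Therefore that moduli space is a manifold of dimension
\[
3\hat g-3+n=3(2g+b-1)-3+n=n+3b+6g-6=\#(p_{i})-3\chi(\Sigma),
\]
using $\chi(\Sigma)=2-2g-b$, with the finitely many distributions of the $p_{i}$ contributing distinct components of this common dimension. (Alternatively one computes the tangent space at $[\fatSigma]$ as the $\sigma$-real part of the $H^{1}$ of the sheaf of holomorphic vector fields on $D\Sigma$ vanishing at the $n$ doubled marked points; Riemann--Roch on $D\Sigma$ gives complex dimension $3\hat g-3+n$ once the corresponding $H^{0}$ vanishes, which is exactly stability, and passing to the real part halves the real dimension.)

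The index arithmetic above is immediate once the doubling dictionary is in place; the genuine work is the standard-but-nontrivial input from the theory of real/Klein surfaces carried out in \cite[Sections 3--4]{Liu:Moduli}, namely that $\mathrm{Fix}(\sigma_{\ast})$ is nonempty and a smooth manifold and corresponds to the isotopy-class moduli space exactly (not merely up to finite ambiguity), plus the elementary local verification that a boundary marked point doubles to an honest puncture so that $\chi(D\dot\Sigma)=2\chi(\Sigma)-n$ on the nose. That identification of the real locus is the step I expect to be the main obstacle.
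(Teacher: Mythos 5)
Your proof is correct, but note that the paper offers no argument of its own here: the lemma is explicitly quoted as a summary of Sections 3--4 of \cite{Liu:Moduli}, and those sections establish the result precisely by the doubling-to-symmetric-Riemann-surfaces route you describe. So your proposal (including the step you flag as the main obstacle, the identification of the fixed locus of the induced involution on Teichm\"uller space with the isotopy-class moduli space) is essentially the same approach as the cited source, and the index arithmetic and stability discussion you give are accurate.
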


The only non-stable curves we consider are the $2$-disk, $\disk$, with one or two boundary punctures, which have automorphism groups of dimensions $2$ and $1$ respectively.

We defer to \cite{Liu:Moduli} for descriptions of the compactifications of moduli spaces of decorated Riemann surfaces. In particular, we assume familiarity with nodal curves which will appear in the proof of Theorem \ref{Thm:Main}. It is of note that we have not included interior punctures or surfaces without boundary marked points. 

Because we will be interested in holomorphic maps with Lagrangian boundary in a Euclidian space, all interior punctures of maps under consideration will be removeble. Limits of holomorphic curves may of course develop interior punctures in their domains as the boundary component of a curve may shrink to a point. We will see non-trivial examples of this phenomenon in Section \ref{Sec:AnnuliExamples}. Because the Lagrangians of interest will be cylinders over Legendrian links, there will be no holomorphic maps without at least one boundary puncture. 

\subsection{Moduli spaces of holomorphic $u$}

Let $\lambda$ be a compact, immersed multi-curve in $\C$ in good position. We consider $(j, J_{0})$-holomorphic maps $u: \dot{\Sigma} \rightarrow \C$ which satisfy the following conditions:
\be 
\item $u$ satisfies the Lagrangian boundary condition, $u(\partial \dot{\Sigma}) \subset \lambda$.
\item The \emph{energy}, $\energy(u)$, of $u$ is finite, where it is defined
\begin{equation*}
\energy(u) = \int_{\Sigma} u^{\ast}(dx \wedge dy).
\end{equation*}
\item All boundary punctures are asymptotic to self-intersection of $\lambda$.
\ee

Two such maps will be considered equivalent if they differ by a reparameterization of the domain which is an isotopy of decorated Riemann surfaces. Therefore our moduli spaces will be modeled on Teichm\"{u}ller space rather than on the Deligne-Mumford space of curves, typically used in SFT. See Section \ref{Sec:Orbibundle} for further commentary.

Provided such a map $u$ we write $\ModSpace^{\lambda}_{u}$ for path connected component of the space of such $(j',J_{0})$-holomorphic maps $u': \dot{\Sigma} \rightarrow \C$ containing $u$, where $j'$ a complex structure on $\dot{\Sigma}$, modulo biholomorphic reparameterization. Without a map $u$ specified, we will write $\ModSpace^{\lambda}$ for the associated moduli space.

According to our convention that holomorphic maps are parameterized up to isotopy together with fact that isotopy acts trivially on cohomology, $H^{1}(\Sigma)$ forms a trivializable vector bundle over each $\ModSpace^{\lambda}_{u}$. Such a choice of trivialization is equivalent to a choice of basis of $H^{1}(\Sigma)$.

\subsection{Moduli spaces of Dirichlet problems}

Now suppose that $\Lambda$ is a Legendrian link in $\R^{3}$ and write $\lambda = \pi_{\C}(\Lambda)$ for its projection to the $xy$-plane, $\C$. We assume that $\Lambda$ is in good position.

Consider the moduli space $\ModSpace^{\Dfancy}$ whose elements consist of pairs $( t^{\partial}, u)$ of holomorphic maps $u \in \ModSpace^{\lambda}$ together with continuous functions $t^{\partial}: \partial \dot{\Sigma} \rightarrow \R$ such that
\begin{equation*}
(t^{\partial}(z), u(z)) \in \Lambda \subset \R^{3}, \quad z \in \partial \dot{\Sigma}.
\end{equation*}
We declare that $(t^{\partial}_{i}, u_{i})$ for $i=1, 2$ are equivalent if they differ by isotopic reparameterizations of the domain. We call $\ModSpace^{\Dfancy}$ the \emph{Dirichlet moduli space}. For a particular choice of $(t^{\partial}, u)$ the path-connected component of $\ModSpace^{\Dfancy}$ containing the pair $(t^{\partial}, u)$ will be denoted $\ModSpace^{\Dfancy}_{(t^{\partial}, u)}$.

\begin{prop}\label{Prop:DirichletProjection}
The map 
\begin{equation*}
\pi_{\Dfancy}: \ModSpace^{\Dfancy} \rightarrow \ModSpace^{\lambda},\quad \pi_{\Dfancy}(t^{\partial}, u) = u
\end{equation*}
is a homeomorphism away from constant maps.
\end{prop}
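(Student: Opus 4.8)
The plan is to exhibit an explicit inverse to $\pi_{\Dfancy}$ away from constant maps, namely the map sending $u \in \ModSpace^{\lambda}$ to a pair $(t^{\partial}_{u}, u)$ where $t^{\partial}_{u}$ is recovered from the Legendrian lifting data. The key point is that the contact condition forces $t^{\partial}$ to be determined, up to an overall additive constant, by $u$; and once one fixes a basepoint on $\partial\dot\Sigma$ this constant is pinned down, so within each path component the assignment $u \mapsto t^{\partial}_{u}$ is single-valued and continuous. Concretely, write the Legendrian $\Lambda$ locally as a graph $t = f(x,y)$ over its Lagrangian projection (away from crossings); then along any arc of $\partial\dot\Sigma$ the function $t^{\partial}$ must equal $f\circ u$, and near a boundary puncture asymptotic to a chord it must jump by exactly the action $\action(\chord)$ of that chord. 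So $t^{\partial}$ is globally reconstructed by integrating $dt = y\,dx$ pulled back along $u|_{\partial\dot\Sigma}$, choosing on each component of $\partial\dot\Sigma$ the correct sheet of $\Lambda$; the sheet is locally constant along a component, hence determined by its value at one point.

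First I would set up the local normal form: using good position (Definition \ref{Def:GoodPosition}), near each double point of $\lambda$ the two strands of $\Lambda$ lift to two sheets with prescribed $t$-values, and $dt = y\,dx$ holds identically on $\Lambda$. Second, given $u$, I would define $t^{\partial}_u$ on $\partial\dot\Sigma$ by: on each connected arc of $\partial\dot\Sigma$ between consecutive punctures, $t^{\partial}_u(z) = t_0 + \int_{z_0}^{z} u^{\ast}(y\,dx)$, where $z_0$ is a chosen basepoint on that arc and $t_0$ is the $t$-coordinate of the appropriate sheet of $\Lambda$ over $u(z_0)$. One checks this is well-defined (the integrand is exact along the arc since $\lambda$ is $1$-dimensional, so the integral is path-independent) and that $(t^{\partial}_u(z), u(z)) \in \Lambda$ for all $z$ — this is where one uses that $u(z)\in\lambda$ and that $\Lambda$ is the Legendrian lift. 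Third, I would verify the asymptotic/puncture matching: as $z$ approaches a boundary puncture, $u(z)$ converges to a double point of $\lambda$, and the jump in $t^{\partial}_u$ across the puncture is $\pm\action(\chord)$ by the defining integral of the action — so the two sheets of $\Lambda$ are joined correctly and $(t^{\partial}_u, u)\in\ModSpace^{\Dfancy}$. Fourth, continuity of $u \mapsto (t^{\partial}_u, u)$ is clear from the integral formula (the integrals depend continuously on $u$ in the relevant topology on $\ModSpace^{\lambda}_u$), and it is manifestly a two-sided inverse of $\pi_{\Dfancy}$, giving a homeomorphism on each path component, hence on the complement of constant maps.

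The main obstacle I expect is the bookkeeping of sheets and basepoints to confirm that $t^{\partial}_u$ is genuinely well-defined and single-valued on all of $\partial\dot\Sigma$ simultaneously — one must check that the choices of sheet made at basepoints on different arcs are forced to be mutually compatible by the puncture-matching condition, so that no monodromy obstruction arises, and that this holds uniformly over a path component of $\ModSpace^{\lambda}$. The exclusion of constant maps is exactly what is needed here: if $u$ is constant then its image is a single point of $\lambda$, the lift has a genuine ambiguity (and $\obstruction$-type phenomena degenerate), so the map $\pi_{\Dfancy}$ need not be injective there. Everything else — path-independence of the defining integral, the $dt = y\,dx$ identity, continuity — is routine once the normal form near double points from good position is in hand.
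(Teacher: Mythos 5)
Your argument is correct and is essentially the paper's own proof: away from constant maps, $t^{\partial}$ is the unique continuous lift of $u|_{\partial \dot{\Sigma}}$ to $\Lambda$, pinned down on each connected component of $\partial \dot{\Sigma}$ by its forced value at any point not mapped to a double point of $\lambda$ (such a point exists because a non-constant holomorphic $u$ cannot be constant along a boundary component); your integral of $u^{\ast}(y\,dx)$ is just an explicit way of following $\Lambda$ from that anchor. One remark: the monodromy/compatibility worry in your final paragraph is vacuous, since $t^{\partial}$ is only required to be continuous on $\partial \dot{\Sigma}$ (the punctures are deleted, so distinct components and arcs carry no mutual constraint) and the anchor value is determined pointwise by the embedded link $\Lambda$, not merely up to an additive constant.
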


\begin{proof}
Because $\Lambda$ is a submanifold of $\R^{3}$ and the function $t^{\partial}$ is required to be continuous, the restriction of $t^{\partial}$ to any connected component $\partial \dot{\Sigma}$ is determined by its value at a given point which is not mapped to a double point of $\lambda$. Because we have assumed that $u$ is non-constant, the restriction of $u$ to each component of $\partial \dot{\Sigma}$ must be non-constant. Hence the image of each such connected component must contain a point $z$ for which $u(z)$ is not a double point of $\lambda$. For such $z$, there is a unique choice of $t^{\partial}(z)$. Thus $t^{\partial}$ is uniquely determined by the map $u$.
\end{proof}

\subsection{Moduli spaces of holomorphic $U$}\label{Sec:HoloUDefn}

We now consider $(j, J)$-holomorphic maps 
\begin{equation*}
U = (s, t, u): \Sigma \rightarrow \R \times \R \times \C
\end{equation*}
subject to the following conditions:
\be
\item $U$ has Lagrangian boundary, $U(\partial \dot{\Sigma}) \subset \R \times \Lambda$.
\item The \emph{$d\alpha$-energy}, $\energy_{d\alpha}(U)$, of $U$ is finite, where it is defined \begin{equation*}
\energy_{d\alpha}(U) = \int_{\Sigma}U^{\ast}d\alpha_{std}.
\end{equation*}
\item The \emph{Hofer energy}, $\energy_{H}(U)$, of $U$ is finite, where it is defined
\begin{equation*}
\energy_{H}(U) = \sup_{\rho \in \mathcal{F}} \int_{\Sigma} U^{\ast} (\rho(s) ds \wedge \alpha),\quad \mathcal{F} = \{ \rho: \R \rightarrow [0, 1]:\ \frac{d \rho}{d s} \geq 0\}.
\end{equation*}
\item All boundary punctures asymptotic to chords of $\Lambda$. That is, for each boundary puncture $p \in \partial \Sigma$ there is a holomorphic coordinate system of the form $[C_{0}, \infty)_{C} \times [0, \pi] \subset \dot{\Sigma}$ identifying the point at $C  = \infty$ with $p$ so that $\lim_{z \rightarrow p} s(z) = \pm \infty$ and we have convergence of the paths $\lim_{C\rightarrow \infty}(t(C, \ast), u(C, \ast))$ to a chord of $\Lambda$ with $\pm$-orientation.
\ee

As $\pi_{\C}$ is $(j, J_{0})$-holomorphic and $d\alpha_{std} = dx \wedge dy$, the $d\alpha$-energy is determined by $u = \pi_{\C}\circ U$. Stokes' theorem also tells us that the energy of $U$ is determined by the chords $\{ \chord_{j}^{\pm} \}_{j=1}^{m^{\pm}}$ to which it is positively and negatively asymptotic:
\begin{equation}\label{Eq:ActionEnergy}
\energy_{d\alpha}(U) = \energy(u) = \sum_{1}^{m^{+}} \action(\chord_{j}^{+}) - \sum_{1}^{m^{-}} \action(\chord_{j}^{-}).
\end{equation}

Provided such a map $U$ we write $\ModSpace^{\Lambda}_{U}$ for the path connected component of the space of such $(j',J)$-holomorphic maps $U': \dot{\Sigma} \rightarrow \R \times \C \times \R$ containing $U$, where $j'$ a complex structure on $\dot{\Sigma}$, modulo isotopic reparameterization. This moduli space admits an $\R$-action by translation in the $s$-direction, $(s_{0}, U) \mapsto U + (s_{0}, 0, 0, 0)$, and we write $\ModSpace^{\Lambda}_{U}/\R$ for the quotient. Disregarding a particular $U$, the moduli space and its $\R$-quotient will be denoted $\ModSpace^{\Lambda}$ and $\ModSpace^{\Lambda}/\R$, respectively.

\subsection{Cohomological obstructions}\label{Sec:Dirichlet}

For a function $f: \dot{\Sigma} \rightarrow \R$ defined on a subset $\dot{\Sigma} \subset \Sigma$ of a compact Riemann surface $(\Sigma, j)$, the Laplacian $\Delta f$ is defined\footnote{What we're calling the Laplacian -- taking values in $\Omega^{2}(\dot{\Sigma})$ rather than the usual $\Omega^{0}(\dot{\Sigma}) = \Cinfty(\dot{\Sigma})$ -- is typically called the Levi form. See \cite{OS:SurgeryBook, SteinToWeinstein}.}
\begin{equation*}
\Delta f = -d (df \circ j) \in \Omega^{2}(\dot{\Sigma})
\end{equation*}
which in a local holomorphic coordinate system $x, y$ takes the usual form 
\begin{equation*}
\Delta f = (\frac{\partial^{2} f}{\partial x^{2}} + \frac{\partial^{2} f}{\partial y^{2}})dx\wedge dy.
\end{equation*}

\begin{lemma}
Let $\Lambda \subset \Rthree$ be a Legendrian link. Then a map 
\begin{equation*}
U = (s, t, u): \dot{\Sigma} \rightarrow \R \times \R \times \C, \quad U(\partial \dot{\Sigma}) \subset \R \times \Lambda
\end{equation*}
is $(j, J)$-holomorphic if and only if the following are satisfied,
\begin{equation*}
\delbar_{J_{0}} u = 0,\quad ds = dt\circ j,
\end{equation*}
in which case $\Delta t = 0$.
\end{lemma}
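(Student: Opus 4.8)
The plan is to unwind the definition of $(j,J)$-holomorphicity using the product structure of $J$. Write the target as $\C_{s,t}\times\C_{x,y}$ as in Section \ref{Sec:SpecifyJ}, so that $J = J_0\times J_0$ is block-diagonal with respect to the corresponding splitting of $T(\R^4)$. A smooth map $U=(s,t,u)$ is $(j,J)$-holomorphic exactly when $dU\circ j = J\circ dU$ as bundle maps $T\dot{\Sigma}\to U^{\ast}T\R^4$. Since both $J$ and the target splitting respect the decomposition, this single equation is equivalent to the conjunction of its $\C_{s,t}$-component and its $\C_{x,y}$-component, which may be analyzed independently. The boundary condition $U(\partial\dot{\Sigma})\subset\R\times\Lambda$ plays no role in this pointwise characterization; it only selects which holomorphic maps enter the moduli spaces $\ModSpace^{\Lambda}$.

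The $\C_{x,y}$-component is precisely the equation $du\circ j = J_0\circ du$, i.e. $\delbar_{J_0}u = 0$ by definition. For the $\C_{s,t}$-component, recall from \eqref{Eq:Jstd} that $J\partial_s=\partial_t$, so under the identification of $\C_{s,t}$ with the complex coordinate $s+it$, the pair $(s,t)\colon\dot{\Sigma}\to\C$ is $(j,J_0)$-holomorphic precisely when $\delbar_{j}(s+it)=0$. Expanding $d(s+it)\circ j = i\,d(s+it)$ into real and imaginary parts and using $j^2=-\Id$ reduces this to the single real equation $ds = dt\circ j$, the imaginary part being equivalent to the real part after precomposition with $j$. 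This is the asserted Cauchy--Riemann relation.

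Finally, assuming $ds = dt\circ j$, apply the exterior derivative to both sides: $d(dt\circ j)=d(ds)=0$. By the definition $\Delta t = -d(dt\circ j)$ adopted in this subsection, this gives $\Delta t = 0$, so $t$ is harmonic on $\dot{\Sigma}$.

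The computation is entirely routine; the only point requiring care is bookkeeping the orientation and sign conventions relating $J$ (through $J\partial_s=\partial_t$ and $J\partial_x=\partial_y$) to the form-level equations, so that one arrives at $ds = dt\circ j$ rather than $ds = -dt\circ j$. I would confirm this against a local holomorphic chart on $\dot{\Sigma}$, in which the system becomes the familiar first-order Cauchy--Riemann equations and $\Delta t = \bigl(\tfrac{\partial^2 t}{\partial x^2}+\tfrac{\partial^2 t}{\partial y^2}\bigr)dx\wedge dy = 0$ follows immediately from differentiating them.
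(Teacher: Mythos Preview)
Your proposal is correct and is precisely the straightforward computation the paper alludes to without writing out; the paper's own ``proof'' is the single sentence ``This is a straightforward computation.'' Your decomposition via the block-diagonal structure $J = J_0 \times J_0$, the reduction of the $\C_{s,t}$-component to $ds = dt\circ j$, and the deduction $\Delta t = -d(dt\circ j) = -d(ds) = 0$ are exactly the intended steps.
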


This is a straightforward computation. To address Question \ref{Q:Main}, we want to understand to what extent elements of $\ModSpace^{\Lambda}$ are determined by $(t, u) = \pi_{\R^{3}} \circ U$.  With this line of inquiry in mind -- together with the fact that harmonic functions are determined by their boundary values -- we define a map
\begin{equation*}
\pi_{\Lambda}: \ModSpace^{\Lambda}/\R \rightarrow \ModSpace^{\Dfancy},\quad \pi_{\Lambda}([(s, t, u)]) = (t|_{\partial \dot{\Sigma}}, u).
\end{equation*}
The content of \cite[Theorem 2.1]{DR:Lifting} and \cite[Theorem 7.7]{ENS:Orientations} is that $\pi_{\Lambda}$ is a homeomorphism when $\Sigma = \disk$. We follow their strategy.

Provided $(t^{\partial}, u)$, the existence of a function $t \in \Cinfty(\dot{\Sigma})$ solving the Dirichlet problem
\begin{equation}\label{Eq:Dirichlet}
t|_{\partial \dot{\Sigma}} = t^{\partial},\quad \Delta t = 0
\end{equation}
may be established using classical techniques, such as Perron's method as described in \cite[Section 6.4.2]{Ahlfors}\footnote{While the solution to the Dirichlet problem in \cite{Ahlfors} is worked out for bounded regions $\Omega \subset \C$ with non-singular $\partial \Omega$ and bounded, piece-wise continuous $t^{\partial}$, the proof relies only on analysis of functions on holomorphically embedded disks $\subset \Omega$, and so generalizes to arbitrary compact Riemann surfaces with only notational modification.}. In contrast with \cite{DR:Lifting, ENS:Orientations}, the required exactness of $dt \circ j$ is not automatically satisfied when $\chi(\Sigma) < 1$. To each Dirichlet problem $(t^{\partial}, u) \in \ModSpace^{\Dfancy}$ we define the \emph{obstruction class}
\begin{equation}\label{Eq:ObstructionDef}
\obstruction: \ModSpace^{\Dfancy} \rightarrow H^{1}(\Sigma, \R),\quad \obstruction((t^{\partial}, u)) = [dt \circ j].
\end{equation}
The fact that $\Delta t = 0$ is equivalent to $dt \circ j \in \Omega^{1}(\Sigma)$ being closed establishes that $\obstruction((t^{\partial}, u))$ is indeed a cohomological cycle. As we have declared that holomorphic maps are equivalent only if they differ by an isotopy of $\fatSigma$, and any such isotopy will act trivially on $H^{1}(\Sigma, \R)$, $\obstruction$ is well defined.

\begin{thm}\label{Thm:ObstructionCount}
The map $\pi_{\Lambda}$ is an injection whose image is $\obstruction^{-1}(0)$.
\end{thm}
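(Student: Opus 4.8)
The plan is to produce, on $\obstruction^{-1}(0)$, an explicit inverse to $\pi_{\Lambda}$ and to read both injectivity and the image description off of it. Two facts do all the work: (i) the solution of the Dirichlet problem \eqref{Eq:Dirichlet} with prescribed boundary values is unique; and (ii) by the Lemma above, a map $U = (s,t,u)$ is $(j,J)$-holomorphic precisely when $\delbar_{J_{0}}u = 0$ and $ds = dt\circ j$ (so that $\Delta t = 0$), and in particular a primitive $s$ of the closed $1$-form $dt\circ j$ exists if and only if $[dt\circ j] = 0$, in which case it is unique up to the additive constant generating the $\R$-action. Since the Lagrangians in play are cylinders over Legendrian links, every curve under consideration has at least one boundary puncture and so is non-constant, so Proposition \ref{Prop:DirichletProjection} applies without caveat.

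For the image: given $[(s,t,u)]$ with $\pi_{\Lambda}([(s,t,u)]) = (t^{\partial},u)$, the function $t$ is harmonic with $t|_{\partial\dot\Sigma} = t^{\partial}$, hence by (i) it is the very solution appearing in the definition \eqref{Eq:ObstructionDef} of $\obstruction$; since $dt\circ j = ds$ is exact, $\obstruction((t^{\partial},u)) = 0$. For injectivity: if $\pi_{\Lambda}$ identifies $[(s_{1},t_{1},u_{1})]$ and $[(s_{2},t_{2},u_{2})]$, then after reparameterizing within the isotopy class we may take both maps to have a common domain, with $u_{1} = u_{2}$ and $t_{1}|_{\partial\dot\Sigma} = t_{2}|_{\partial\dot\Sigma}$; uniqueness (i) gives $t_{1} = t_{2}$, and then $ds_{1} = ds_{2}$ forces $s_{1} - s_{2}$ to be constant on the connected surface $\dot\Sigma$, so the two classes coincide in $\ModSpace^{\Lambda}/\R$.

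For surjectivity onto $\obstruction^{-1}(0)$: given $(t^{\partial},u)$ with $\obstruction((t^{\partial},u)) = 0$, solve \eqref{Eq:Dirichlet} for a harmonic $t$ (Perron's method, as in the discussion preceding the theorem). The hypothesis $[dt\circ j] = 0$ is exactly what is needed when $\dot\Sigma$ is not contractible to integrate $dt\circ j = ds$ for some $s \in \Cinfty(\dot\Sigma)$. By the Lemma, $U = (s,t,u)$ is $(j,J)$-holomorphic, $U(\partial\dot\Sigma) \subset \R\times\Lambda$ by the definition of $\ModSpace^{\Dfancy}$, and $\pi_{\Lambda}([U]) = (t^{\partial},u)$. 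It remains to check that $U \in \ModSpace^{\Lambda}$, i.e. finiteness of the $d\alpha$- and Hofer energies and the chord asymptotics at each boundary puncture. The $d\alpha$-energy equals $\energy(u)$ and is finite since $u \in \ModSpace^{\lambda}$. For the rest one argues in a half-strip coordinate $[C_{0},\infty)\times[0,\pi]$ near each puncture: $u$ converges exponentially to a double point of $\lambda$, so $t$ and its harmonic conjugate $s$ have controlled growth forcing $s \to \pm\infty$ and $(t,u)$ to limit onto the corresponding chord of $\Lambda$ with the matching sign, whence finite Hofer energy; this is the local analysis already carried out in \cite[Section 7]{ENS:Orientations} and \cite[Theorem 2.1]{DR:Lifting}, which is insensitive to whether $\chi(\Sigma) < 1$.

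The genuinely new ingredient relative to the disk case is the cohomological bookkeeping, and that part is immediate: $\obstruction$ vanishes exactly when $dt\circ j$ admits a primitive, unique up to the $\R$-translation already quotiented out, so no extra moduli are introduced. The main obstacle is therefore the final verification in the surjectivity step — that the $s$ obtained by integration yields a finite-Hofer-energy curve with the correct puncture asymptotics — but, as indicated, this reduces to the local puncture analysis of \cite{ENS:Orientations, DR:Lifting}, and the only remaining task is to observe that those local arguments do not see the global topology of $\Sigma$.
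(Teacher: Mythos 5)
Your argument is correct and follows essentially the same route as the paper: solve the Dirichlet problem, use the vanishing of $\obstruction$ to integrate $dt\circ j$ to a primitive $s$, reduce the puncture asymptotics to the local strip analysis of \cite{ENS:Orientations, DR:Lifting, RS:Strips} (which is insensitive to the global topology of $\Sigma$), and get injectivity from uniqueness of harmonic extensions plus uniqueness of primitives up to the constant generating the $\R$-action. The only cosmetic difference is that the paper handles finiteness of the Hofer energy by citing the bound of \cite[Lemma B.3]{Ekholm:Z2RSFT} in terms of the actions of the positive punctures rather than deducing it from the asymptotics, but this does not change the substance of the proof.
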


\begin{proof}
Continuting the above discussion, we show that each $(t^{\partial}, u) \in \obstruction^{-1}(0)$ admits a lift to $\ModSpace^{\Lambda}$. Provided $t \in \Cinfty(\dot{\Sigma})$ solving Equation \eqref{Eq:Dirichlet}, the definition of $\obstruction$ then entails that there exists $s:\dot{\Sigma} \rightarrow \R$ solving $ds = dt \circ j$. We must verify that $U = (s, t, u)$ meets the criteria for inclusion in $\ModSpace^{\Lambda}$ as described in Section \ref{Sec:HoloUDefn}. 

The Lagrangian boundary condition for $\R \times \Lambda$ is satisfied by definition of the space $\ModSpace^{\Dfancy}$. The finiteness of $\energy_{d\alpha}(U)$ follows from Equation \eqref{Eq:ActionEnergy} together with the presumed finiteness of $\energy(u)$. For the finiteness of the Hofer energy, we appeal to \cite[Lemma B.3]{Ekholm:Z2RSFT}, which bounds $\energy_{H}(U)$ in terms of the actions of the positive punctures of $U$. 

To complete our proof that $(s, t, u) \in \ModSpace^{\Lambda}$, we must show that the boundary punctures of $\dot{\Sigma}$ are positively and negatively asymptotic to chords of $\Lambda$. The case $\Sigma = \disk$ is worked out in the proof of \cite[Theorem 7.7]{ENS:Orientations}. As the analysis of \cite{ENS:Orientations} is carried out in a strip-like neighborhood of each boundary puncture whence it is reduced to the case considered in \cite{RS:Strips}, it translates without modification to the case of a boundary puncture in a general decorated Riemann surface.

We have so far established that $\obstruction^{-1}(0) \subset \pi_{\Lambda}(\ModSpace^{\Lambda}/\R)$. The opposite inclusion $\pi_{\Lambda}(\ModSpace^{\Lambda}/\R) \subset \obstruction^{-1}(0)$ as well as the uniqueness of lifts follow from the fact that $(s, t, u) \in \ModSpace^{\Lambda}$ is uniquely determined -- up to addition of constant functions of the form $(s_{0}, 0, 0)$ -- by $(t|_{\partial \dot{\Sigma}}, u)$. For $t$, this is a consequence of the maximum principle for harmonic functions. For $s$, this follows from the fact that if $ds' = ds$ then $s' = s + s_{0}$ for some $s_{0} \in \R$.
\end{proof}

\subsection{Some technical nuances}\label{Sec:TechnicalIssues}

\subsubsection{When $u$ is constant}

The map $\pi_{\Dfancy}$ will typically be many-to-one along the inverse image of constant maps $u$ whose image is a self-intersection of $\lambda$. Supposing that $(x_{0}, y_{0}) \in \C$ is such a self-intersection corresponding to a chord $c$ of $\Lambda$, which for notational simplicity, we assume starts at $t = 0$. Supposing also that $\fatSigma$ is a decorated Riemann surface, there is only a single holomorphic map to $\C$ with image $(x_{0}, y_{0})$, but $2^{N}$ holomorphic maps in $\pi_{\Dfancy}^{-1}((x_{0}, y_{0}))$ where $N$ is the number of connected components of $\partial \dot{\Sigma}$. Indeed, each factor of $2$ accounts for a choice of sending each connected component of $\partial \dot{\Sigma}$ to either the starting or ending point of the chord lying over $(x_{0}, y_{0})$.

\begin{comment}
The disk with two boundary punctures $\Sigma = \disk$, $\{ p_{i} \} = \{ \pm 1\}$, is conformally equivalent to an infinite strip $\R_{s} \times i[0, \action(\chord)]_{t}$. The Dirichlet problem $t^{\partial}|_{\R \times \{ i\action{\chord}\}} = \action(\chord), t^{\partial}|_{\R \times \{ 0\}} = 0$, $u = (x_{0}, y_{0})$ will determine the trivial strip $(s, t) \mapsto (s, t, 0)$ over $\chord$.
\end{comment}

\begin{figure}[h]
\begin{overpic}[scale=.4]{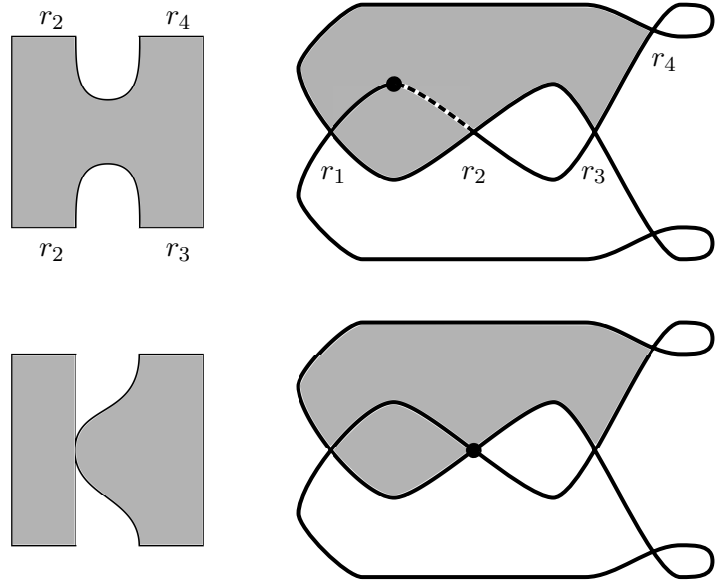}
\put(4, 79){$\chord_{2}$}
\put(22, 79){$\chord_{4}$}
\put(4, 46){$\chord_{2}$}
\put(22, 46){$\chord_{3}$}

\put(44, 57){$\chord_{1}$}
\put(64, 57){$\chord_{2}$}
\put(81, 57){$\chord_{3}$}
\put(91, 73){$\chord_{4}$}
\end{overpic}
\caption{A $1$-parameter family of holomorphic curves in $\R \times \R^{3}$ with boundary on $\R \times \Lambda$, where $\Lambda$ is a trefoil. The images of the $u$ are shown in the right-hand column with boundary critical points shown as a thick dot. As the image of the boundary critical point tends towards $\pi_{\C}(\chord_{2})$ in $\C$, a trivial strip over $\chord_{2}$ bubbles off.}
\label{Fig:TrefoilCLBreaking}
\end{figure}

As described in \cite{CL:SFTStringTop}, curves of positive area with multiple positive punctures may degenerate into nodal configuration containing trivial strips. An example is shown in Figure \ref{Fig:TrefoilCLBreaking}. This type of degeneration necessitates that the appearance of the string topological correction term in the differential of Ng's Legendrian RSFT \cite{Ng:RSFT}.

\subsubsection{Compactness}

The preceding example indicates that care must be taken when attempting to extend the results of this section pertaining to the moduli spaces $\ModSpace^{\ast}$ to their corresponding compactifications, $\overline{\ModSpace^{\ast}}$.

For another example: We will see in Section \ref{Sec:CompactifiedDirichlet} that the obvious extension of the map $\pi_{\Dfancy}$ to the associated compactified moduli spaces $\overline{\ModSpace^{\Dfancy}} \rightarrow \overline{\ModSpace^{\lambda}}$ is not a homeomorphism in general.

More foundational issues arise when studying the interplay between Dirichlet problems and limits of decorated Riemann surfaces: The classical example
\begin{equation*}
\Int(\Sigma) = \{ |z| \in (0, 1) \} \subset \C,\quad t = \begin{cases}
0 & |z| = 1\\
1 & z = 0
\end{cases}
\end{equation*}
indicates that the Dirichlet problem of Equation \eqref{Eq:Dirichlet} cannot be solved in general over surfaces with singular boundary, which may be realized as limits of non-singular decorated Riemann surfaces. Thus the map $\obstruction$ -- as defined using Dirichlet solutions in Equation \eqref{Eq:ObstructionDef} -- does not generally admit a continuous extension to $\partial \overline{\ModSpace^{\Dfancy}}$. We will leverage this lack of continuity to draw pictures of moduli cycles of annuli in Section \ref{Sec:AnnuliExamples}.

\subsubsection{Orbibundles over moduli spaces of unparameterized maps}\label{Sec:Orbibundle}

If we had defined our moduli spaces by declaring holomorphic maps to be equivalent if they differ by isomorphism -- rather than isotopy -- of decorated Riemann surfaces, then $\obstruction$ would be ill-defined due to the possibility for $\Aut(\fatSigma)$ to act non-trivially on $H^{1}(\Sigma)$. Then $H^{1}(\Sigma)$ would form an orbibundle over the $\ModSpace^{\Dfancy}$ moduli spaces of which $\obstruction$ would be a lifted multisection. See, for example, \cite{FT:Kuranishi}.

We have chosen to work with isotopies of $\fatSigma$ to avoid these complexities. Theorem \ref{Thm:Main} indicates that for $LSFT$ curves $U$ with $\ind(U) \leq 2$ there is no difference between moduli spaces of isotopic and isomorphic curves when the left-right-simple condition is in effect.

\subsubsection{Perturbed holomorphic maps}

For the purposes of either topological applications or perturbing the function $\obstruction$, the reader may be interested in solving equations of the form
\begin{equation}\label{Eq:DbarPert}
\begin{gathered}
U = (s, t, u): \Sigma \rightarrow \R \times \R \times \C, \quad \delbar u = 0,\quad ds + dt \circ j = \gamma,\\
\gamma \in \Omega^{1}(\Sigma),\quad d\gamma = d(\gamma \circ j) = 0.
\end{gathered}
\end{equation}
We say that solutions of the above equations satisfying the conditions of Section \ref{Sec:HoloUDefn} are \emph{harmonically perturbed holomorphic curves} as studied in \cite{Abbas:JBook, ACH:PlanarWeinstein, DF:HCompactness}.

\begin{prop}
For each $(t^{\partial}, u) \in \ModSpace^{\lambda}$ there exists a harmonically perturbed holomorphic curve $U$ with perturbation term $\gamma \in \Omega^{1}$ satisfying
\begin{equation*}
\pi_{\C}(U) = u, \quad U(\partial \dot{\Sigma}) \subset \R \times \Lambda, \quad  [\gamma] = \obstruction((t^{\partial}, u)) \in H^{1}(\Sigma)
\end{equation*}
\end{prop}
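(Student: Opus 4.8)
The plan is to run the lifting argument from the proof of Theorem \ref{Thm:ObstructionCount}, but with the (possibly nonexistent) exact primitive of $dt \circ j$ replaced by a primitive of a cohomologous \emph{smooth} $1$-form. I interpret the hypothesis $(t^{\partial}, u) \in \ModSpace^{\lambda}$ as referring to the Dirichlet moduli space $\ModSpace^{\Dfancy}$ (Proposition \ref{Prop:DirichletProjection}) and assume $u$ non-constant. First I would solve, exactly as there, the Dirichlet problem $\Delta t = 0$, $t|_{\partial \dot{\Sigma}} = t^{\partial}$ by Perron's method, obtaining a harmonic $t \in \Cinfty(\dot{\Sigma})$. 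By the very definition of $\obstruction$, the form $dt \circ j$ is closed and represents $\obstruction((t^{\partial}, u)) \in H^{1}(\Sigma)$; it is in fact also coclosed on $\dot{\Sigma}$, since $d(dt \circ j) = -\Delta t = 0$ and $d((dt \circ j) \circ j) = -d(dt) = 0$. But near each marked point $p$, in strip coordinates $[C_{0}, \infty)_{C} \times [0, \pi]$, it is asymptotic to $\pm a_{p}\, dC$ with $a_{p} = \action(\chord_{p})/\pi > 0$, so it does not extend smoothly over $\Sigma$.

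Next I would produce $\gamma \in \Omega^{1}(\Sigma)$ that is closed, coclosed, smooth up to $\partial \Sigma$ and at the marked points, and represents $\obstruction((t^{\partial}, u))$. The cleanest route is the Hodge (Friedrichs) decomposition for compact surfaces with boundary, which supplies a harmonic field in each absolute cohomology class; alternatively, and in keeping with the Perron-method style already in use, I would start from any smooth closed representative $\gamma_{0}$ of the class and set $\gamma = \gamma_{0} - dh$, where $h \in \Cinfty(\Sigma)$ solves the Poisson problem $\Delta h = -d(\gamma_{0} \circ j)$ with $h|_{\partial \Sigma} = 0$ (solvable since the right-hand side is a smooth $2$-form, with $h$ smooth up to the boundary by elliptic regularity). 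Since $\gamma$ and $dt \circ j$ are cohomologous in $H^{1}(\dot{\Sigma}) \cong H^{1}(\Sigma)$, the form $\gamma - dt \circ j$ is exact on $\dot{\Sigma}$; I would let $s \in \Cinfty(\dot{\Sigma})$ be a primitive, so that $U := (s, t, u)$ satisfies $ds + dt \circ j = \gamma$ with $d\gamma = d(\gamma \circ j) = 0$ and $\delbar u = 0$, i.e.\ it solves Equation \eqref{Eq:DbarPert}. By construction $\pi_{\C}(U) = u$, $[\gamma] = \obstruction((t^{\partial}, u))$, and $U(\partial \dot{\Sigma}) \subset \R \times \Lambda$ because $(t^{\partial}(z), u(z)) \in \Lambda$ for all $z \in \partial \dot{\Sigma}$ by the definition of $\ModSpace^{\Dfancy}$.

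It then remains to check that $U$ is a genuine harmonically perturbed holomorphic curve in the sense of Section \ref{Sec:HoloUDefn}, not merely a formal solution of \eqref{Eq:DbarPert}. The $d\alpha$-energy is immediate: $\energy_{d\alpha}(U) = \int_{\Sigma} u^{\ast}(dx \wedge dy) = \energy(u) < \infty$. For the asymptotics at a marked point $p$, since $\gamma$ is smooth on $\Sigma$ it decays exponentially in the strip coordinate, so $ds = \gamma - dt \circ j = \mp a_{p}\, dC + O(e^{-C})$ and hence $s = \mp a_{p} C + O(1) \to \mp \infty$; combined with the already-established convergence of $(t(C, \cdot), u(C, \cdot))$ to the chord $\chord_{p}$, this exhibits $p$ as a boundary puncture asymptotic to $\chord_{p}$ (carrying a $\pm$-label opposite to that of the honest holomorphic lift, which does not matter here). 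Finiteness of the Hofer energy I would deduce as in the proof of Theorem \ref{Thm:ObstructionCount}, via \cite[Lemma B.3]{Ekholm:Z2RSFT}, after absorbing the bounded contribution of $\gamma$.

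I expect the main obstacle to be precisely this last paragraph: controlling the Hofer energy and the puncture asymptotics in the presence of $\gamma$. Both rest on precise strip-coordinate expansions for $t$ near each $p$ — exponential convergence of $t(C, \cdot)$ to the linear profile $a_{p}\tau$, with the jump of the boundary data $t^{\partial}$ across $p$ equal to $\action(\chord_{p})$ — and these now have to be combined with the elliptic estimates for $\gamma$ near $\partial \Sigma$ and near the marked points. The underlying strip analysis is the one already invoked from \cite{RS:Strips} in the proof of Theorem \ref{Thm:ObstructionCount}, so I anticipate no new analytic phenomena, but the bookkeeping needed to see that the perturbed energies stay finite is where the real work lies.
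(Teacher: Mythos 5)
Your argument is correct and follows the paper's strategy step for step (Dirichlet solution $t$, a smooth closed-and-coclosed representative $\gamma$ of $\obstruction$, a primitive $s$ of $\gamma - dt\circ j$, energies via \cite[Lemma B.3]{Ekholm:Z2RSFT}, asymptotics via \cite{RS:Strips}); the differences are in two places. For the construction of $\gamma$, the paper holomorphically embeds $\Sigma$ into a closed surface $\widetilde{\Sigma}$ (a double, following \cite{Liu:Moduli}) with $i^{\ast}$ surjective on $H^{1}$ and pulls back a harmonic representative from the Hodge decomposition on $\widetilde{\Sigma}$, whereas you work directly on the compact surface with boundary, either via the Hodge--Friedrichs decomposition or via the Poisson correction $\gamma = \gamma_{0} - dh$ with $\Delta h = -d(\gamma_{0}\circ j)$, $h|_{\partial\Sigma}=0$; both routes are legitimate and yield the same conclusion, yours being arguably more self-contained and the paper's avoiding boundary-value Hodge theory altogether. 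For the asymptotics, which you correctly identify as the remaining work, the paper's device is exactly the one you are missing: near each puncture write $\gamma = df$ with $f$ a local primitive vanishing at the puncture, observe that $(s-f, t)$ is then honestly holomorphic so the unperturbed strip estimates of \cite{DR:Lifting, ENS:Orientations, RS:Strips} apply verbatim, and note $|f| = \bigO(|z|) = \bigO(e^{-C})$ in the strip coordinate, so $(s,t) = (s-f,t) + (f,0)$ still converges exponentially to the trivial strip over a chord. This replaces your direct expansion of $dt\circ j$ (which by itself is only harmonic, so \cite{RS:Strips} does not apply to it immediately) and simultaneously settles the bookkeeping you flag for the puncture asymptotics; the Hofer energy needs no such care, since $\delbar u = 0$ gives $u^{\ast}d\alpha_{std} \geq 0$ pointwise and the proof of Ekholm's lemma goes through unchanged.
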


Observe that the asymptotic convergence of boundary punctures to Reeb chords here is analogous to the asymptotic convergence of interior punctures to closed Reeb orbits in the $\partial \Sigma = \emptyset$ case described by Abbas in \cite[Proposition 1.5]{Abbas:JBook}.

\begin{proof}
We proceed as in the proof of Theorem \ref{Thm:ObstructionCount} by first finding a function $t$ solving the Dirichlet problem of Equation \eqref{Eq:Dirichlet}. Now we construct the harmonic form $\gamma \in \Omega^{1}(\Sigma)$ with the appropriate cohomology class.

Let $(\widetilde{\Sigma}, \widetilde{j})$ be a closed Riemann surface with a holomorphic embedding $i: \Sigma \rightarrow \widetilde{\Sigma}$ for which $i^{\ast}: H^{1}(\widetilde{\Sigma}) \rightarrow H^{1}(\Sigma)$ is a surjection. The doubling constructions described in \cite{Liu:Moduli} provide some options of such $(\widetilde{\Sigma}, \widetilde{j})$. Applying a Hodge decomposition to $\widetilde{\Sigma}$, we can find a harmonic $\widetilde{\gamma} \in \Omega^{1}(\widetilde{\Sigma})$ for which $[i^{\ast}\widetilde{\gamma}] = \obstruction = [dt \circ j]$ and then define $\gamma = i^{\ast}\widetilde{\gamma}$. It follows that $\gamma$ is harmonic and $\gamma - dt \circ j$ is cohomologically trivial, so that there exists an $s$ solving Equation \eqref{Eq:DbarPert}. Moreover, the closure and co-closure of $\gamma$ ensure that $s$ is harmonic as well.

Having defined the function $s$ and so the map $U = (s, t, u)$, we address the $d\alpha$- and Hofer-energies. By the fact that $u$ is holomorphic, Equation \eqref{Eq:ActionEnergy} still applies so that $\energy_{d\alpha}(U) < \infty$. Again using $\delbar u = 0$ -- implying $u^{\ast}d\alpha_{std} \geq 0$ point-wise on $\dot{\Sigma}$ -- the proof of \cite[Theorem B.3]{Ekholm:Z2RSFT} is still valid, implying $\energy_{H}(U) < 0$.

As for asymptotics, exponential converge in $u$ of boundary punctures towards a self-intersection of $\lambda$ follows from \cite{RS:Strips} as in the proof of Theorem \ref{Thm:ObstructionCount}. To complete the proof, we will establish asymptotic convergence towards a chord in the $s$ and $t$ directions. 

Near each boundary puncture of $\Sigma$, we can use one of two holomorphic models. First we can model the puncture on the half-disk $\disk \cap \{ y \leq 0\} \subset \C$ with the boundary marked point corresponding to $z = 0$. Next, we can use the strip model, $[0, \infty) \times i[0, \pi] \subset \C$ with the point at infinity, $x \rightarrow \infty$ corresponding to the marked point. To get from the strip model to the half-disk model, we simply apply the transformation $z = e^{-\zeta}$.

In either model $\gamma$ is exact and $\gamma = df$, so that $s - f$ is holomorphic. Applying the arguments of \cite{DR:Lifting, ENS:Orientations} as in Theorem \ref{Thm:ObstructionCount}, we see that $(s - f, t)$ -- which is holomorphic -- satisfies the usual exponential convergence estimate in the half-infinite strip model. We may assume the $f = 0$ at our boundary  puncture, so that a Taylor expansion for $f$ in the half disk model yields $|f| = \bigO(|z|) = \bigO(e^{|\zeta|})$. Therefore $(s, t) = (s -f, t) + (f, 0)$ will exponentially converge to the trivial strip over some chord as desired.
\end{proof}

\section{Combinatorial index computations}\label{Sec:Index}

Here we describe two ways of computing indices of holomorphic maps in $\ModSpace^{\lambda}$ and $\ModSpace^{\Lambda}$: First we state the usual index formulae using Maslov indices. Second, we compute indices by counting critical points of holomorphic maps from a surface to the plane in Theorem \ref{Thm:BranchIndex}.

Throughout this section, the good position condition is assumed to hold. This is primarily to simplify the calculations of rotation angles as described in Equation \eqref{Eq:SimpleRotationAngle}. We do not assume that $\lambda$ or $\Lambda$ is left-right-simple.

\subsection{Index calculations from Maslov numbers}

\begin{thm}\label{Thm:IndMaslovTwoD}
The expected dimension of the space $\ModSpace^{\lambda}_{u}$ is given by the index formula
\begin{equation*}
\ind(u) = \sum_{k=1}^{\#(\partial \Sigma)} \Maslov(u|_{\partial \Sigma_{k}}) - 2\chi(\Sigma) + \#(p_{i}).
\end{equation*}
\end{thm}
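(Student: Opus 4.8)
The plan is to derive this index formula from the standard Riemann–Roch / linearization-of-$\delbar$ machinery, the same way the Fredholm index of a boundary-value problem is computed in \cite{MS:Curves} and in the Legendrian contact homology literature (e.g. \cite{EES:LegendriansInR2nPlus1, DR:Lifting}). The moduli space $\ModSpace^{\lambda}_{u}$ is cut out by the Cauchy–Riemann operator $\delbar_{J_{0}}$ acting on sections of $u^{\ast}T\C$ with totally real boundary conditions along $\lambda$, where we also allow the complex structure $j$ on the domain to vary and the marked points $p_{i}$ to move on $\partial\Sigma$. So $\ind(u)$ is the index of this extended linearized operator, and the formula should be assembled from three contributions: (i) the Riemann–Roch index of the $\delbar$-operator on the bundle pair $(u^{\ast}T\C, \text{totally real boundary data})$ over the punctured surface $\dot\Sigma$, with asymptotic (Conley–Zehnder-type) corrections at the boundary punctures coming from the double points of $\lambda$; (ii) the dimension $\#(p_{i})$ contributed by letting the boundary marked points vary; and (iii) minus the dimension of the automorphism group, or equivalently the contribution of varying $j$, bookkept via $\dim\mathcal{T}(\dot\Sigma) - \dim\Aut$.

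First I would set up the linearized operator and invoke the Riemann–Roch theorem for a complex line bundle $E \to \Sigma$ with totally real boundary subbundle $F \subset E|_{\partial\Sigma}$ over a compact surface with boundary: the real index is $\dim E \cdot \chi(\Sigma) + \mu(E,F)$, where $\mu$ is the boundary Maslov number. Here $E = u^{\ast}T\C$ has complex rank $1$, so the "bulk" term is $2\chi(\Sigma) + \mu$. Next I would incorporate the punctures: each boundary puncture is asymptotic to a double point of $\lambda$, and in the strip-like coordinates near the puncture the operator is asymptotically a fixed self-adjoint operator whose spectral data (a half-integer grading determined by the rotation angle $\theta$ at the crossing, which by good position and Equation~\eqref{Eq:SimpleRotationAngle} lies in $\frac{\pi}{2}\Z$) enters the index. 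I would fold these asymptotic corrections into the definition of $\Maslov(u|_{\partial\Sigma_{k}})$ — i.e. the Maslov number of the $k$-th boundary loop should be understood as the Maslov index of the bundle pair over the capped-off or half-infinite-strip-completed boundary arc, already including the puncture contributions. This is exactly the convention in which the combined bulk-plus-asymptotic term reads $\sum_{k}\Maslov(u|_{\partial\Sigma_{k}}) - 2\chi(\Sigma)$ — note the sign on $\chi$ is flipped relative to the naive Riemann–Roch statement because for a finite-energy punctured problem the relevant Euler characteristic is that of $\dot\Sigma$, and $\chi(\dot\Sigma) = \chi(\Sigma) - \#(p_i)$; I should be careful to reconcile the $\chi(\Sigma)$ versus $\chi(\dot\Sigma)$ conventions and absorb the discrepancy correctly.

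Then I would add the domain-moduli contribution. Since $\ModSpace^{\lambda}_{u}$ is defined as holomorphic maps modulo biholomorphic reparameterization with varying $j$, the Fredholm index gets an extra $\dim\mathcal{T}(\dot\Sigma, \{p_i\}) - \dim\Aut(\dot\Sigma, j)$, and by Lemma~\ref{Lemma:SurfaceModSpaceDim} this net contribution of the domain is $\#(p_i) - 3\chi(\Sigma) - (-3\chi(\Sigma)) $... — more precisely, the variation of $j$ contributes $\dim H^{1}(\dot\Sigma, T\dot\Sigma)$-type terms and the reparameterization kills $\dim\Aut$, and after the dust settles the marked-point freedom contributes exactly $+\#(p_i)$ to $\ind(u)$ beyond the $\delbar$ bulk term; this is where the final $+\#(p_i)$ in the stated formula comes from. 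I would handle the non-stable cases ($\disk$ with one or two punctures) separately by noting $\Aut$ is positive-dimensional there and checking the formula still produces the correct expected dimension by direct inspection, matching the known disk index formulas of \cite{ENS:Orientations}.

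The main obstacle I anticipate is bookkeeping the sign and normalization conventions so that the three pieces combine into precisely $\sum_k \Maslov(u|_{\partial\Sigma_k}) - 2\chi(\Sigma) + \#(p_i)$ rather than some variant with $\chi(\dot\Sigma)$ or an extra factor — in particular, pinning down what "$\Maslov(u|_{\partial\Sigma_k})$" means at a puncture (whether the rotation-angle/Conley–Zehnder correction at each double point is lumped into the Maslov term or tracked separately) and verifying that the good-position hypothesis, via Equation~\eqref{Eq:SimpleRotationAngle}, makes these corrections well-defined integers. A secondary technical point is justifying that the linearized operator is genuinely Fredholm on the appropriate weighted Sobolev spaces near the punctures (exponential weights chosen so the asymptotic operator is invertible), but this is standard and I would cite \cite{RS:Strips} and \cite{EES:LegendriansInR2nPlus1} rather than reprove it.
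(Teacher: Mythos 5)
Your overall route --- linearize $\delbar$, apply Riemann--Roch for a rank-one bundle pair with totally real boundary conditions, fold the puncture asymptotics into the Maslov term, and add the dimension of the space of domains --- is the standard one and is genuinely different from what the paper does: the paper gives no independent derivation of Theorem \ref{Thm:IndMaslovTwoD}, it simply quotes \cite[Theorem A.1]{CEL:Switching} with $n=1$ and records the rotation-angle formula \eqref{Eq:MaslovRotation}, whose $-\half$ contribution per puncture fixes the convention under which that theorem reads as stated. So in effect you are proposing to reprove the cited result, which is legitimate in principle.

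However, the way you assemble the three contributions does not close, and this is more than a convention to be chased later. First, Riemann--Roch for a bundle pair counts the \emph{complex} rank: for $E=u^{\ast}T\C$ with totally real boundary subbundle the fixed-domain index is $\chi(\Sigma)+\mu$, not $2\chi(\Sigma)+\mu$ (cf. \cite{MS:Curves}); with the $-\half$-per-puncture convention of \eqref{Eq:MaslovRotation} built into $\Maslov$, this stays equal to $\chi(\Sigma)+\sum_{k}\Maslov(u|_{\partial\Sigma_{k}})$ in the punctured setting. Second, the domain moduli do not contribute ``exactly $+\#(p_{i})$'': by Lemma \ref{Lemma:SurfaceModSpaceDim} the space of decorated domains up to isotopy has dimension $\#(p_{i})-3\chi(\Sigma)$, and it is this $-3\chi(\Sigma)$, combined with the $+\chi(\Sigma)$ from rank-one Riemann--Roch, that produces the $-2\chi(\Sigma)$ in the statement; your alternative explanation via $\chi(\dot{\Sigma})=\chi(\Sigma)-\#(p_{i})$ changes the coefficient of $\#(p_{i})$, not the sign of $\chi(\Sigma)$, and would not yield the formula. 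With the corrected accounting, $\bigl(\chi(\Sigma)+\sum_{k}\Maslov(u|_{\partial\Sigma_{k}})\bigr)+\bigl(\#(p_{i})-3\chi(\Sigma)\bigr)$ is exactly the claimed index, and the unstable cases ($\disk$ with one or two boundary punctures) come along automatically because there $\#(p_{i})-3\chi(\Sigma)$ equals $-\dim\Aut$. A useful sanity check is the immersed convex-cornered disk with $m$ punctures: the fixed-domain index is $3-m$ and the total index is $0$, consistent with the paper's Theorem \ref{Thm:BranchIndex}.
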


This is a special case of the index formula computation of \cite[Theorem A.1]{CEL:Switching} in the case $n=1$ with only Lagrangian intersection boundary punctures using the (unique up to homotopy) framing of $T\C$ which extends over all of $\C$. The Maslov index over a boundary component $\partial \Sigma_{k}$ with punctures $p_{k, i}$ and oriented, connected components of $\partial \dot{\Sigma}_{k}$ denoted $\eta_{k, i}$ may be computed
\begin{equation}\label{Eq:MaslovRotation}
\Maslov(u|_{\partial \Sigma_{k}}) = \frac{1}{\pi}\sum \theta(u|_{\eta_{k, i}}) - \half\#(p_{k, i}).
\end{equation}
Here the $-\half$ contribution at each boundary puncture comes from the fact that a negative rotations of a Lagrangian plane in $\C^{n}$ is applied along each Lagrangian intersection (or Reeb chord) in order to assign a closed loop of Lagrangian subspaces to each $\partial \Sigma_{k}$. See \cite{CEL:Switching} for further context and details.

\begin{defn}
We say that such a map $u$ is \emph{rigid} if $\ind(u) = 0$. 
\end{defn}

We expect that rigid maps would be isolated in the sense that $\ModSpace_{u} = \{ [u] \}$ where $[u]$ is the class of maps which agree with $u$ after biholomorphic isotopy.

\begin{thm}\label{Thm:IndMaslovThreeD}
The expected dimension of the moduli space $\ModSpace^{\Lambda}_{U}$ is given by the index formula
\begin{equation*}
\ind(U) = \sum_{k=1}^{\#(\partial \Sigma)} \Maslov(U|_{\partial \Sigma_{k}}) - \chi(\Sigma) + \#(p_{i})
\end{equation*}
\end{thm}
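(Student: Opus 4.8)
The plan is to read the index of $U$ off that of its projection $u = \pi_{\C}\circ U$ by means of Theorem~\ref{Thm:ObstructionCount}, and then feed in the two-dimensional formula of Theorem~\ref{Thm:IndMaslovTwoD}. First I would fix a non-constant $U = (s,t,u) \in \ModSpace^{\Lambda}$. By Proposition~\ref{Prop:DirichletProjection} the Dirichlet moduli space $\ModSpace^{\Dfancy}_{(t^{\partial},u)}$ is homeomorphic to $\ModSpace^{\lambda}_{u}$, hence has expected dimension $\ind(u)$; and Theorem~\ref{Thm:ObstructionCount} identifies $\ModSpace^{\Lambda}_{U}/\R$ with $\obstruction^{-1}(0)$, where $\obstruction\colon \ModSpace^{\Dfancy}_{(t^{\partial},u)}\to H^{1}(\Sigma,\R)$ and $\dim H^{1}(\Sigma,\R) = 1 - \chi(\Sigma)$. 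Granting that $\obstruction$ is transverse to $0$ — achievable after a generic $\Cinfty$-small perturbation of $\Lambda$, and in any case harmless since one may run the whole count at the level of Fredholm indices of the relevant linearized operators, where transversality plays no role — the zero locus has expected dimension $\ind(u) - \bigl(1-\chi(\Sigma)\bigr)$. Since translation in $s$ acts freely on non-constant curves and contributes one more dimension, this yields
\[
\ind(U) = \bigl(\ind(u) - 1 + \chi(\Sigma)\bigr) + 1 = \ind(u) + \chi(\Sigma).
\]

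Next I would substitute Theorem~\ref{Thm:IndMaslovTwoD}, obtaining
\[
\ind(U) = \sum_{k=1}^{\#(\partial\Sigma)} \Maslov(u|_{\partial\Sigma_{k}}) - 2\chi(\Sigma) + \#(p_{i}) + \chi(\Sigma) = \sum_{k=1}^{\#(\partial\Sigma)} \Maslov(u|_{\partial\Sigma_{k}}) - \chi(\Sigma) + \#(p_{i}),
\]
so everything reduces to the identity $\Maslov(U|_{\partial\Sigma_{k}}) = \Maslov(u|_{\partial\Sigma_{k}})$ for each boundary component. Both sides are computed, following \cite{CEL:Switching}, from the winding of a loop in a Lagrangian Grassmannian — on the left, the loop of tangent $2$-planes $T(\R\times\Lambda)\subset\R^{4}\simeq\C_{s,t}\times\C_{x,y}$ swept out by $U|_{\partial\Sigma_{k}}$, read off in the global complex trivialization furnished by $J$; on the right, the loop of tangent lines $T\lambda\subset\C$ swept out by $u|_{\partial\Sigma_{k}}$ — together with a $-\tfrac{1}{2}$ correction at each of the $\#(p_{k,i})$ Reeb-chord punctures.

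The computation I would carry out to prove that identity is the following. Parametrize $\partial\dot{\Sigma}_{k}$ so that $u$ has velocity $\dot{x}\partial_{x}+\dot{y}\partial_{y}$, corresponding to $\dot{x}+i\dot{y}\in\C$; the winding of $T\lambda$ is the winding of this oriented direction. Because $\Lambda$ is Legendrian for $\alpha_{std}=dt-y\,dx$, the boundary of $U$ has velocity $(y\dot{x})\partial_{t}+\dot{x}\partial_{x}+\dot{y}\partial_{y}$, and $T(\R\times\Lambda)$ along $U|_{\partial\Sigma_{k}}$ is spanned by $\partial_{s}$, corresponding to $(1,0)$, together with this velocity, corresponding to $(iy\dot{x},\,\dot{x}+i\dot{y})$. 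The $2\times2$ complex matrix $\left(\begin{smallmatrix}1 & iy\dot{x}\\ 0 & \dot{x}+i\dot{y}\end{smallmatrix}\right)$ with these two columns is upper triangular with determinant exactly $\dot{x}+i\dot{y}$, so $\det^{2}$ of the $\R^{4}$-loop equals $\det^{2}$ of the $\C$-loop at every point, hence has the same winding number. Running the same computation on the two strands at a double point of $\lambda$ — where, by the good-position hypothesis, the velocities are $1\pm i$, independently of the chord — shows that the corner angle attached to each Reeb chord, and therefore the $-\tfrac{1}{2}$ puncture correction, also agrees in the two pictures. With $\Maslov(U|_{\partial\Sigma_{k}}) = \Maslov(u|_{\partial\Sigma_{k}})$ in hand the theorem follows. (Alternatively one could derive the formula directly from the index theorem of \cite{CEL:Switching} with $n=2$, exactly as for Theorem~\ref{Thm:IndMaslovTwoD}; the route above has the side benefit of making $\ind(U)=\ind(u)+\chi(\Sigma)$ explicit, which is convenient later.)

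The hard part will be the half-integer bookkeeping in the last step: one must be certain that passing from $\lambda\subset\C$ to $\R\times\Lambda\subset\R^{4}$ neither gains nor loses a half-turn at any puncture. The upper-triangular form of the matrix above is precisely what makes this transparent — it is the reason the extra $\partial_{s}$ and $\partial_{t}$ directions contribute nothing to $\det^{2}$ — but it is exactly the spot where a stray sign or factor of two would corrupt the formula, so it deserves to be spelled out carefully. The remaining ingredients (the dimension count of the first paragraph and the substitution from Theorem~\ref{Thm:IndMaslovTwoD}) are routine.
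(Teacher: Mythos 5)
Your route is genuinely different from the paper's. The paper proves this theorem by direct citation: it is the $n=2$ case of the index theorem of \cite{CEL:Switching} with only Reeb-chord boundary punctures, exactly parallel to how Theorem \ref{Thm:IndMaslovTwoD} is the $n=1$ case; the relation $\ind(U)=\ind(u)+\chi(\Sigma)$ and the equality $\Maslov(U|_{\partial\Sigma_{k}})=\Maslov(u|_{\partial\Sigma_{k}})$ are then \emph{deduced} from the two theorems (this is Equation \eqref{Eq:IndProjComparison}), not used to prove them. You reverse the logic: you first obtain $\ind(U)=\ind(u)+\chi(\Sigma)$ from the obstruction-theoretic picture of Section \ref{Sec:ModSpaces} and then convert via Theorem \ref{Thm:IndMaslovTwoD} and the Maslov equality. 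This is not circular, and your determinant computation for $\Maslov(U|_{\partial\Sigma_{k}})=\Maslov(u|_{\partial\Sigma_{k}})$ is in fact more careful than the paper's own one-line justification (which says $T(\R\times\Lambda)$ ``splits as $\R\partial_{s}\oplus T\lambda$'' and invokes additivity, glossing over the $y\dot{x}\,\partial_{t}$ component of $T\Lambda$ that your upper-triangular frame handles correctly). What your route buys is precisely what you say: the comparison $\ind(U)=\ind(u)+\chi(\Sigma)$ comes out explicitly rather than as an afterthought.

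The one step you should not leave as a parenthesis is the first paragraph. Theorem \ref{Thm:ObstructionCount} is a statement about the actual moduli spaces, while ``expected dimension'' means the Fredholm index of the linearized operator at $U$; a set-level identification $\ModSpace^{\Lambda}/\R=\obstruction^{-1}(0)$ plus a transversality assumption computes an actual dimension, not an index, and transversality of $\obstruction$ can genuinely fail here (cf.\ Section \ref{Sec:AnnuliExamples}). To make your count legitimate you must carry out the linearized version you allude to: with the product $J$, the linearization at $U=(s,t,u)$ acting on $(\sigma,\tau,\nu)$ is upper triangular with respect to the boundary condition $\nu\in T\lambda$, $\tau$ determined by $\nu$ through the Legendrian condition, $\sigma$ free; the $\C_{x,y}$-block has index $\ind(u)$ and the $(s,t)$-block with prescribed boundary $\tau$ has index $\chi(\Sigma)$ (kernel the constants in $\sigma$, cokernel $H^{1}(\Sigma,\R)$), so the indices add. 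Once that is spelled out your argument is complete; alternatively, the citation of \cite{CEL:Switching} with $n=2$ that you mention at the end is exactly the paper's proof and requires none of this.
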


This is also a special case of \cite[Theorem A.1]{CEL:Switching}, this time in the case $n=2$ with only Reeb chord boundary punctures. Combining Theorems \ref{Thm:IndMaslovTwoD} and \ref{Thm:IndMaslovThreeD} we have
\begin{equation}\label{Eq:IndProjComparison}
\ind(U) = \ind(u) + \chi(\Sigma)
\end{equation}
which follows from the fact that 
\begin{equation*}
\Maslov(U|_{\partial \Sigma_{k}}) = \Maslov(u|_{\partial \Sigma_{k}}).
\end{equation*}
Indeed, $T(\R \times \Lambda)$ splits as $\R \partial_{s} \oplus T\lambda$ and the Maslov number of $\R \partial_{s}$ is trivial over any loop, so the formula follows from the additivity of the Maslov number under direct sums \cite[Theorem 2.29]{MS:SymplecticIntro}.

\begin{defn}
We say that such a map $U$ is \emph{rigid} if $\ind(U) = 1$.
\end{defn}

We expect that a rigid map $U$ would be isolated in the sense that $\ModSpace^{\Lambda}_{U} \simeq \R$, parameterized by maps which are $\R$-translates (in the $s$ direction) of maps biholomorphically isotopic to $U$. Equation \eqref{Eq:IndProjComparison} indicates that in order to study $d$-dimensional moduli spaces $\ModSpace_{U}^{\Lambda}$ we must understand the obstruction class $\obstruction$ over moduli spaces $\ModSpace_{u}^{\lambda}$ of dimension
\begin{equation}\label{Eq:DimensionComparison}
\dim(\ModSpace_{u}^{\lambda}) = d - \chi(\Sigma) = d + \dim H^{1}(\Sigma) - 1.
\end{equation}

\begin{rmk}
As with the content of Section \ref{Sec:ModSpaces}, Equations \eqref{Eq:IndProjComparison} and \eqref{Eq:DimensionComparison} readily translate to the contexts of \cite{CHT:HF, DR:Lifting, Lipshitz:Cylindrical}.
\end{rmk}

\subsection{Local branching of holomorphic maps}

For a holomorphic map $u \in \ModSpace_{u}$ we write $\Crit^{\ast}(u) \subset \Sigma$ for the sets of critical points
\begin{equation*}
\Crit(u) = \{z\in \dot{\Sigma},\ Tu(z) = 0 \}, \quad \Crit^{\Int}(u) = \Crit(u)\cap \Int(\dot{\Sigma}),\quad \Crit^{\partial}(u) = \Crit(u) \cap \partial \dot{\Sigma}.
\end{equation*}
We only consider non-constant $u$ with $\Sigma$ compact so that $\Crit(u)$ is a finite subset of $\Sigma$, cf. \cite{Ahlfors, MS:SymplecticIntro}.

\subsubsection{Orders of critical points}

For $z_{0} \in \dot{\Sigma}$ we define the \emph{order of $u$ at $z_{0}$}, $\ord_{u}(z_{0})$, as follows: Take a neighborhood $N(z_{0}) \subset \Sigma$ containing $z_{0}$ within which $u$ admits a power-series expansion
\begin{equation*}
u|_{N(z_{0})} = \sum_{0}^{\infty} a_{k}z^{k}
\end{equation*}
where we have identified $z_{0} = 0\in N(z_{0})$. For $z_{0} \in \Int(\dot{\Sigma})$, the neighborhood hood $N(z_{0})$ may be modeled on $\B_{\epsilon}(0) \subset \C$ and for $z_{0} \in \partial \Sigma$, the neighborhood may be modeled on the half-ball $\B_{\epsilon}(0) \cap \{ |y| \geq 0 \}$ for some $\epsilon > 0$. Then $\ord_{u}(z_{0})$ is defined the smallest $k$ for which the $z^{k}$ coefficient in $du$ is non-zero.

\subsubsection{Orders of boundary punctures}

We will additionally need a notion of the order of $u$ at a boundary puncture. Here matters will be considerably simplified by our requirement that the angles of self-intersections of $\lambda$ are always $\frac{\pi}{2}$.

After a rotation, the local picture of $\lambda$ at a self-intersection will always look like the coordinate axes in $\C$ intersecting at the origin. We then define the order of $u$ at a boundary puncture $p_{i}$, $\ord_{u}(p_{i})$ to be the number of quadrants covered by the map $u$ by a neighborhood about $p_{i}$ minus $1$.

\begin{figure}[h]
\begin{overpic}[scale=.5]{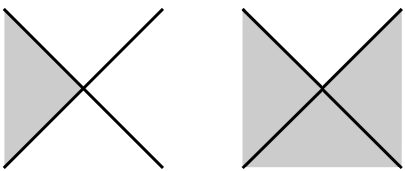}
\end{overpic}
\caption{Convex and non-convex corners corresponding to boundary punctures of orders $0$ and $2$, respectively. Each subfigure may be rotated by multiples of $\frac{\pi}{2}$.}
\label{Fig:ConvexNonconvex}
\end{figure}

For some examples, we'll work with the actual coordinate axes: Let $\lambda = \{ x = 0 \} \cup \{y = 0\}$ and take a neighborhood of $p_{i}$ in $\dot{\Sigma}$ to be the upper-right quadrant $\B \cap \{ x, y\geq 0\}$. Then if $u$ is locally given by $u(z) = z^{k}$ with $k$ odd, then $\ord_{u}(p_{i}) = k -1$. Observe that this number must be even. Figure \ref{Fig:ConvexNonconvex} shows boundary punctures of orders $0$ and $2$.

\subsection{Index calculations from branch orders}

Here we calculate the Maslov indices of maps $u \in \ModSpace_{u}$ from the orders of critical points and boundary punctures. 

\begin{thm}\label{Thm:BranchIndex}
For $u \in \ModSpace^{\lambda}$,
\begin{equation*}
\ind(u) = \half \sum_{p_{i} \in \partial \Sigma}\ord_{u}(p_{i})  + \sum_{z\in \Crit^{\partial}_{u}}\ord_{u}(z) + 2\sum_{z \in \Crit^{\Int}_{u}}\ord_{u}(z).
\end{equation*}
Hence all rigid curves in $\ModSpace^{\lambda}$ are immersed and such that every boundary puncture has order $0$.
\end{thm}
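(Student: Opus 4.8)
The plan is to compute the Maslov index $\Maslov(u|_{\partial \Sigma_k})$ appearing in Theorem~\ref{Thm:IndMaslovTwoD} directly in terms of the rotation angles $\theta(u|_{\eta_{k,i}})$ via Equation~\eqref{Eq:MaslovRotation}, and then to account for those rotation angles using local models near critical points and boundary punctures. The key observation is that $u$ is holomorphic, so away from $\Crit^{\partial}_u$ and the punctures the boundary arcs $u|_{\eta_{k,i}}$ are immersed into $\lambda$, and their rotation angle is \emph{a priori} only constrained to lie in $\frac{\pi}{2}\Z$ by the good position hypothesis (Equation~\eqref{Eq:SimpleRotationAngle}). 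The strategy is to replace the global count $\sum_k \Maslov(u|_{\partial\Sigma_k})$ by a sum of purely local contributions, and to show those local contributions are exactly the terms on the right-hand side.

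First I would dispose of the boundary punctures. Near a puncture $p_i$, after rotating so $\lambda$ is the coordinate axes, $u$ covers $\ord_u(p_i)+1$ quadrants, i.e.\ the boundary turns through a total angle of $(\ord_u(p_i)+1)\cdot\frac{\pi}{2}$ across the two incoming arcs. In the Maslov formula~\eqref{Eq:MaslovRotation} a puncture contributes $-\half$ from the negative rotation of the Lagrangian plane; comparing the ``straight corner'' case $\ord_u(p_i)=0$ (which must contribute nothing to the index, being the generic rigid picture) fixes the normalization, and one gets a net contribution of $\frac{1}{\pi}\cdot(\ord_u(p_i)+1)\frac{\pi}{2} - \half = \half\ord_u(p_i)$ from each puncture. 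Summing over all punctures gives the first term $\half\sum_{p_i}\ord_u(p_i)$; in particular the $\#(p_i)$ correction term in Theorem~\ref{Thm:IndMaslovTwoD} is precisely absorbed into this computation. Second, I would handle an interior critical point $z\in\Crit^{\Int}_u$ of order $\ord_u(z)=m$: locally $u$ looks like $z\mapsto z^{m+1}$, so a small loop around $z$ maps to a loop winding $m+1$ times. This contributes $2m$ to the index — the factor $2$ coming from the fact that in dimension one a homotopically non-trivial loop of Lagrangians in $\C$ has Maslov number $2$ per winding, and winding number $m+1$ versus the unbranched $1$ gives an excess of $m$; equivalently this is the classical Riemann–Hurwitz contribution, doubled because each branch point of the map to $\C$ raises $-\chi$ by $1$, and $-2\chi$ enters the index. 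Third, for a boundary critical point $z\in\Crit^{\partial}_u$ of order $m$, the half-disk local model $z\mapsto z^{m+1}$ forces the boundary arc to fold through an extra angle $m\pi$, contributing $\frac{1}{\pi}\cdot m\pi = m$ to $\sum_k\Maslov(u|_{\partial\Sigma_k})$, hence the term $\sum_{z\in\Crit^{\partial}_u}\ord_u(z)$.

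The cleanest way to organize the bookkeeping — and what I expect to be the main obstacle — is to separate the ``global'' part of the index (which for an \emph{unbranched} immersion with only order-$0$ punctures is computed by the Maslov/Euler-characteristic formula of Theorem~\ref{Thm:IndMaslovTwoD} and must vanish when $\ind=0$) from the sum of local branching excesses. Concretely, I would argue that the right-hand side of Theorem~\ref{Thm:BranchIndex} equals $\ind(u)$ by showing both sides transform identically under: (i) removing a branch point via passing to a local unbranched model, and (ii) the base case of an embedding with convex corners, where both sides are $0$. The subtlety is that $\ind(u)$ in Theorem~\ref{Thm:IndMaslovTwoD} already includes the $-2\chi(\Sigma)+\#(p_i)$ terms, and one must check the branch-order formula reproduces exactly these — i.e.\ that the Euler characteristic of the domain is correctly recovered from the local data of the map to $\C$ via Riemann–Hurwitz. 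This is where care is needed: $\chi(\Sigma)$ is not visible ``locally,'' so the comparison has to be made against a fixed reference map on the same domain (e.g.\ a branched cover of a disk realizing the same $\Sigma$), or alternatively one invokes Riemann–Hurwitz for the branched cover $u\colon\dot\Sigma\to u(\dot\Sigma)$ with appropriate boundary-corner corrections to convert the Maslov formula into the stated branch-order sum. The final sentence of the theorem is then immediate: if $\ind(u)=0$ then every term on the right, being a sum of nonnegative integers (orders are $\ge 0$, and boundary-puncture orders are even so $\half\ord_u(p_i)\in\Z_{\ge 0}$), must vanish, forcing $\Crit(u)=\emptyset$ and $\ord_u(p_i)=0$ for all $i$.
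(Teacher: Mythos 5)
Your local bookkeeping is correct and agrees with the paper's: a boundary puncture of order $\ord_{u}(p_{i})$ contributes $\half\ord_{u}(p_{i})$ once the $-\half$ Maslov convention and the $\#(p_{i})$ term of Theorem \ref{Thm:IndMaslovTwoD} are taken into account, a boundary critical point contributes its order, an interior critical point contributes twice its order, and the final deduction about rigid curves is the same trivial positivity argument. The overall frame -- feed rotation angles into Equation \eqref{Eq:MaslovRotation} and Theorem \ref{Thm:IndMaslovTwoD} -- is also the paper's.

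The genuine gap is exactly the step you flag as ``the main obstacle'' and then leave unresolved: converting the local branch data into the global formula, i.e.\ recovering the $-2\chi(\Sigma)$ term and justifying the interior-critical-point contribution beyond a winding heuristic. Your proposed route via ``Riemann--Hurwitz for the branched cover $u\colon\dot{\Sigma}\to u(\dot{\Sigma})$'' does not work as stated: $u$ is in general not a branched covering of its image, since the image is an immersed region in $\C$ covered with varying local multiplicities, so there is no target Riemann surface to which Riemann--Hurwitz applies; and the alternative of comparing with a ``fixed reference map on the same domain'' presupposes that index differences are captured by local branch data, which is essentially the statement being proved. The correct version of your winding argument needs a trivialization of $T\dot{\Sigma}$ (or a relative Chern-number count), and the dependence of that count on $\chi(\Sigma)$ is precisely the missing global input. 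The paper closes this by an explicit reduction scheme rather than local bookkeeping: it enlarges $\lambda$ by small circles around critical values, excises neighborhoods of high-order punctures, half-balls at boundary critical points, and keyhole regions at interior critical points (which converts each such point into order-$0$ corner data, with the excised boundary circles accounting for the $\frac{\pi}{2}(\ord+1)$, $\pi(\ord+1)$, $2\pi(\ord+1)$ angle defects), and finally cuts $\dot{\Sigma}$ along an arc basis, each cut lowering the rotation-angle sum by $2\pi$, so that every case reduces to immersed disks with convex corners where the total rotation is $2\pi$. Without this (or an equivalent argument-principle computation relative to a trivialization), your proposal does not yet yield the theorem for $\chi(\Sigma)<1$, which is the case the paper actually needs.
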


\begin{proof}
We will work through a sequence of increasingly general cases. In each case, we'll be computing Maslov numbers using rotation angles of paths as described in Equation \eqref{Eq:MaslovRotation}.
	
First consider the case when $\Sigma = \disk$, $\Crit(u) = \emptyset$, and there are no boundary punctures. Then $u$ is an immersion of the disk into $\C$, and the restriction of $u$ to $\partial \disk$ will have total rotation angle $2\pi$. Then $\Maslov(\partial \Sigma) = 2$ and $2\chi(\Sigma) = 2$ so that $\ind(u) = 0$, as expected.

Next, consider the case when $\Sigma = \disk$, $\Crit(u) = \emptyset$, and the order of $u$ at each boundary puncture $p_{i}$ is $0$. If we round each $\frac{\pi}{2}$ corner of the image of $u$, we obtain an immersion $\widetilde{u}$ whose domain is a disk without boundary punctures. The effect of rounding corners adds $\frac{\pi}{2}$ to the boundary rotation angle for each boundary puncture of $u$. Hence
\begin{equation*}
\sum \theta_{k} = 2\pi - \frac{\pi}{2}\#(p_{i}).
\end{equation*}
When computing the Maslov index of $u|_{\partial \Sigma}$, we subtract $\frac{\pi}{2}$ from the rotation angle for each boundary puncture. Hence $\Maslov(u|_{\partial \Sigma}) = 2 - \#(p_{i})$ and so $\ind(u) = 0$, establishing the theorem in this case.

\begin{figure}[h]
\begin{overpic}[scale=.4]{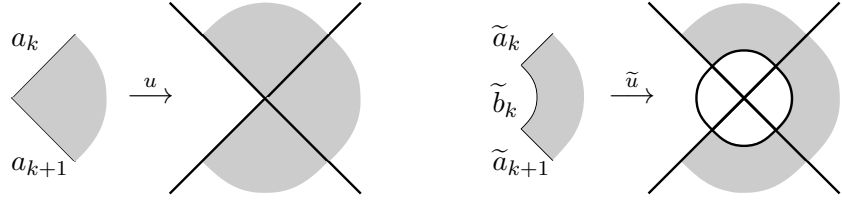}
\put(14, 11){$\overset{u}{\longrightarrow}$}
\put(72, 11){$\overset{\widetilde{u}}{\longrightarrow}$}
\put(0, 18){$a_{k}$}
\put(0, 3){$a_{k+1}$}
\put(58, 18){$\widetilde{a}_{k}$}
\put(58, 10){$\widetilde{b}_{k}$}
\put(58, 3){$\widetilde{a}_{k+1}$}
\end{overpic}
\caption{On the left, a neighborhood of a boundary puncture $p \in \partial \Sigma$ is sent via $u$ to a non-convex corner, so that $\ord_{u}(p) = 2$. On the right, we modifying $\lambda$ and $u$ to obtain $\widetilde{\lambda}$ and $\widetilde{u}$ having two boundary punctures, each having order $0$.}
\label{Fig:PizzaCorner}
\end{figure}

Now we consider the case in which $\Sigma = \disk$, $\Crit(u) = \emptyset$, and the orders of boundary punctures are allowed to be arbitrary non-negative integers. Let $\widetilde{\lambda}$ be an immersed multi-curve obtained from $\lambda$ by adding circles $\partial \disk_{\epsilon}(z_{k})$ centered about each of the double points $z_{k} \in \C$ of $\lambda$. A map $\widetilde{u}$ is obtained from $u$ by deleting the connected components $u^{-1}(\B_{\epsilon})$ from the domain of $u$ which contain punctures. See Figure \ref{Fig:PizzaCorner}. Then the domain of $\widetilde{u}$ will have $2\#(p_{i})$ boundary marked points and the order of $\widetilde{u}$ at each boundary marked point will be $0$. We have already established that if $\widetilde{\theta}_{k}$ are the rotation angles over the boundary components of the domain $\widetilde{\Sigma}$ of $\widetilde{u}$, then 
\begin{equation}\label{Eq:TildeThetaAngles}
\sum \widetilde{\theta}_{k} = 2\pi - \pi\#(p_{i})
\end{equation}
We label the boundary components of $\partial \dot{\Sigma}$ and $\partial \widetilde{\Sigma}$ as shown in Figure \ref{Fig:PizzaCorner} so that each connected component $a_{k}$ of $\partial \dot{\Sigma}$ determines a connected component $\widetilde{a}_{k}$ of $\partial \widetilde{\Sigma}$ and each $p_{k}$ determines a new boundary arc $\widetilde{b}_{k}$ of $\partial \widetilde{\Sigma}$. From the figure, we see that the rotation angle of each $\widetilde{a}_{k}$ agrees with the rotation angle of each $a_{k}$. The new boundary arc, considered as living in $\dot{\Sigma}$ is exactly as the form of the arcs used to define $\ord_{u}(p)$ for a boundary puncture except oriented backwards. The rotation angle of $\tilde{b}_{k}$ is $- \frac{\pi}{2}(\ord_{u}(p_{k}) + 1)$. By combining the rotation angles $\theta(\widetilde{b}_{k})$ with Equation \eqref{Eq:TildeThetaAngles}, we conclude
\begin{equation}\label{Eq:AngleSumPunctures}
\begin{aligned}
\sum \theta(a_{k}) &= \sum \theta\left(\widetilde{a}_{k}\right) = \sum \theta\left(\widetilde{a}_{k}\right) + \sum \theta\left(\widetilde{b}_{k}\right) - \sum \theta\left(\widetilde{b}_{k}\right) \\
& = \sum \widetilde{\theta}_{k} + \frac{\pi}{2}\sum \left(\ord_{u}(p_{k}) + 1\right) \\
&= 2\pi - \frac{\pi}{2} \#(p_{k}) + \frac{\pi}{2} \sum \ord_{u}(p_{k}).
\end{aligned}
\end{equation}
Then $\Maslov(u|_{\partial \Sigma}) = 2 - \#(p_{i}) + \half \sum \ord_{u}(p_{i})$ so that
\begin{equation*}
\ind(u) = 2 - \#(p_{i}) + \half\sum \ord_{u}(p_{i}) - 2\chi(\Sigma) + \#(p_{i}) = \half\sum \ord_{u}(p_{i}).
\end{equation*}
Thus we have established the theorem in our current case.

\begin{figure}[h]
\begin{overpic}[scale=.4]{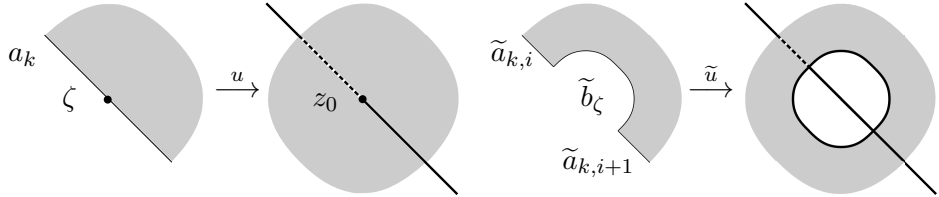}
\put(19, 11){$\overset{u}{\longrightarrow}$}
\put(72, 11){$\overset{\widetilde{u}}{\longrightarrow}$}
\put(-4, 15){$a_{k}$}
\put(2, 10){$\zeta$}
\put(30, 10){$z_{0}$}
\put(50, 15){$\widetilde{a}_{k, i}$}
\put(60, 10){$\widetilde{b}_{\zeta}$}
\put(58, 3){$\widetilde{a}_{k, i+1}$}
\end{overpic}
\caption{Modifying $\lambda$ and $u$ to obtain $\widetilde{\lambda}$ and $\widetilde{u}$ near a boundary critical point $\zeta$ of $\ord_{u}(\zeta) = 1$ with $u(\zeta) = z_{0}$. The map $u$ is obtained locally by folding the boundary in half at the point $\zeta$ with $\partial \Sigma$ mapped to the dashed arc.}
\label{Fig:PizzaBoundary}
\end{figure}

Now we add boundary critical points to our calculation, applying a similar trick wherein $\lambda$ and $u$ are modified. See Figure \ref{Fig:PizzaBoundary}. Suppose that $\Sigma$ is a disk, $\Crit(u)  = \Crit^{\partial}(u)$, and the orders of the boundary punctures are arbitrary non-negative integers. Write $z_{k} \in \C$ for the critical values of $u$. We add circles $\partial \disk_{\epsilon}(z_{k})$ about each $z_{k}$ to $\lambda$ to obtain an immersed multi-curve $\widetilde{\lambda}$. A half-ball is cut out from $\dot{\Sigma}$ centered about each point $\zeta \in \Crit(u)$ to obtain a domain $\widetilde{\Sigma} \subset \Sigma$, a disk with corners, and a holomorphic map $\widetilde{u}: \widetilde{\Sigma} \rightarrow \C$ with boundary on $\widetilde{\lambda}$. The newly created boundary components, $\widetilde{b}_{\zeta}$, will be mapped to the circles which have been added to $\lambda$. For each connected component $a_{k}$ of $\partial \dot{\Sigma}$, there will be a collection of boundary components $\widetilde{a}_{k, i}$. The endpoints of the $\widetilde{b}_{\zeta}$ provide two new marked points on $\partial \widetilde{\Sigma}$ which will both be asymptotic to a point of intersection of $\lambda$ with one of the circles we've added, lying in the $u(\partial \dot{\Sigma}) \subset \lambda$. The map $\widetilde{u}$ is as described in the previous case considered, without boundary critical points and with 
\begin{equation*}
\#(\widetilde{p}_{i}) = \#(p_{i}) + 2\#\Crit^{\partial}(u), \quad \sum_{i} \theta(\widetilde{a}_{k, i}) = \theta(a_{k}),\quad \theta(\widetilde{b}_{\zeta}) = -\pi(\ord_{u}(\zeta) + 1).
\end{equation*}
Here $\tilde{p}_{i}$ are the boundary punctures of $\tilde{\Sigma}$. Those $\tilde{p}_{i}$ corresponding to the $p_{i}$ of $\Sigma$ have $\ord_{\tilde{u}}(\tilde{p}_{i}) = \ord_{u}(p_{i})$ and the order of $\widetilde{u}$ at each of the two new $\widetilde{p}_{i}$ is $0$. So we calculate
\begin{equation}\label{Eq:AngleSumBoundary}
\begin{aligned}
\sum \theta(a_{k}) &=  \sum_{k, i} \theta\left(\widetilde{a}_{k, i}\right) \\
&=  \left(\sum_{k, i} \theta\left(\widetilde{a}_{k, i}\right) + \sum_{\zeta \in \Crit(u)} \theta(\widetilde{b}_{\zeta})\right) - \sum_{\zeta \in \Crit(u)} \theta\left(\widetilde{b}_{\zeta}\right) \\
&= \left( 2\pi - \frac{\pi}{2}\#\left(\widetilde{p}_{k}\right) + \pi \sum \ord_{u}\left(\widetilde{p}_{k}\right) \right) + \pi \sum_{\zeta \in \Crit(u)} \left(\ord_{u}(\zeta) + 1\right) \\
&= 2\pi - \frac{\pi}{2}\left(\#(p_{k}) + 2\#\Crit^{\partial}(u)\right) + \frac{\pi}{2} \sum \ord_{u}(p_{k}) + \pi \sum_{\zeta \in \Crit(u)} \left(\ord_{u}(\zeta) + 1\right)\\
&= 2\pi - \frac{\pi}{2}\#(p_{k}) + \frac{\pi}{2} \sum \ord_{u}\left(p_{k}\right) + \pi \sum_{\zeta \in \Crit^{\partial}_{u}(\zeta)} \ord_{u}(\zeta).
\end{aligned}
\end{equation}
Here we are applying Equation \eqref{Eq:AngleSumPunctures} in the third line above. Again, we use the rotation angle computation above to compute the Maslov number of $u|_{\partial \Sigma}$, establishing the theorem in the case under consideration.

\begin{figure}[h]
\begin{overpic}[scale=.4]{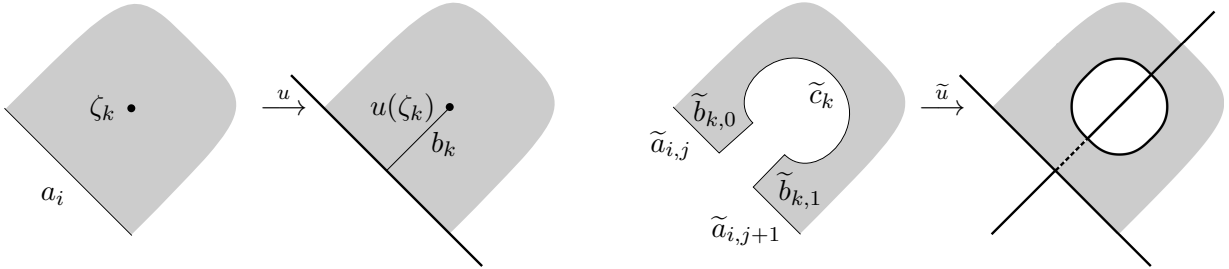}
\put(21, 12.5){$\overset{u}{\longrightarrow}$}
\put(75, 12.5){$\overset{\widetilde{u}}{\longrightarrow}$}
\put(3, 5.5){$a_{i}$}
\put(7, 12.5){$\zeta_{k}$}
\put(30, 12.5){$u(\zeta_{k})$}
\put(35, 9.5){$b_{k}$}
\put(53, 9.5){$\widetilde{a}_{i, j}$}
\put(56.5, 12){$\widetilde{b}_{k, 0}$}
\put(66, 13.5){$\widetilde{c}_{k}$}
\put(63.5, 5.5){$\widetilde{b}_{k, 1}$}
\put(58, 2.5){$\widetilde{a}_{i, j+1}$}
\end{overpic}
\caption{Modifying $\lambda$ and $u$ to account for interior critical points. The dashed arc is the image of $\widetilde{b}_{k, j}$ which overlap perfectly.}
\label{Fig:PizzaInterior}
\end{figure}

Next we incorporate interior critical points into our calculation, following a modification described in Figure \ref{Fig:PizzaInterior}. We start with a general holomorphic map $u$ whose domain is a disk with boundary punctures $\dot{\Sigma}$, mapping $\partial \dot{\Sigma}$ to some $\lambda$. We write $\zeta_{i}$ for points in $\Crit_{u}^{\Int}$ and $z_{k} \in \C$ for the critical values of $u$. For each $z_{k}$, let $\gamma$ be an embedded arc in $\C$ satisfying the following conditions:
\be
\item The $\gamma_{k}$ are pairwise disjoint as subsets of $\C$.
\item For each $k$, $z_{k}$ is a unique point in $\gamma_{k} \cap u(\Crit(u))$.
\item Each $\gamma_{k}$ intersections $\lambda$ at least once.
\item Each $\gamma_{k}$ intersects $\lambda$ transversely in $\frac{\pi}{2}$ angles.
\ee
Let $\widetilde{\lambda}$ be the union of $\lambda$ with the $\gamma_{k}$ and circles $\partial \disk_{\epsilon}(z_{k})$ about each $z_{k}$. For each $\zeta_{k}$ choose a path $b_{k} \subset \gamma_{k} \cap \im(u)$ which starts on $\lambda$ and ends at the point $z_{k}$. We require that all of the $b_{k}$ have disjoint interiors. For each $b_{k}$, choose a path $\widetilde{b}_{k} \subset \dot{\Sigma}$ which starts on $\partial \dot{\Sigma}$ and ends at $\zeta_{k}$. To obtain our $\widetilde{\dot{\Sigma}}$, we cut a keyhole-type shape out of $\dot{\Sigma}$ at each $\zeta_{k}$ as follows:
\be
\item Remove the connected component of $u^{-1}(\B_{\epsilon}(z_{k}))$ which contains the point $\zeta_{k}$.
\item Remove the arc $\widetilde{b}_{k}$.
\item Compactify along what was $\widetilde{b}_{k}$ to obtain boundary arcs $\widetilde{b}_{k, 0}, \widetilde{b}_{k, 1}$.
\ee
The result of this surgery, shown in the center-right of Figure \ref{Fig:PizzaInterior}, is a surface $\widetilde{\dot{\Sigma}}$ with four new boundary marked points for each $\zeta_{k}$. We retroactively specify that the $\widetilde{b}_{k}$ are chosen so that the surface $\widetilde{\Sigma}$ is connected. The map $\widetilde{u}$ is obtained by restricting $u$ to the interior of $\widetilde{\Sigma}$ which is contained in the interior of $\dot{\Sigma}$. The newly created boundary punctures will be asymptotic to intersections of $\gamma_{k}$ with $\lambda$ and of $\gamma_{k}$ with $\partial \disk_{\epsilon}(z_{k})$. Clearly the order of $\widetilde{u}$ at each of the new punctures is $0$. In addition to the $\widetilde{b}_{k, j}$, the boundary components $a_{i}$ will be subdivided into $\widetilde{a}_{i, j}$, and there is a new boundary arc for each $\zeta_{k}$, which we will denote $\widetilde{c}_{k}$ as shown in Figure \ref{Fig:PizzaInterior}. For the calculations of rotation angles, we have
\begin{equation*}
\sum \theta(a_{i}) = \sum_{j} \theta\left(\widetilde{a}_{i j}\right),\quad \theta\left(\widetilde{b}_{k, 0}\right) = - \theta\left(\widetilde{b}_{k, 1}\right),\quad \theta\left(\widetilde{c}_{k}\right) = -2\pi \left(\ord_{u}(\zeta_{k}) + 1\right).
\end{equation*}
As the map $\widetilde{u}$ satisfies the conditions of Equation \eqref{Eq:AngleSumBoundary}, we apply that formula together with the above expressions and $\#(\widetilde{p}_{k}) = \#(p_{k}) + 4\#\Crit^{\Int}(u)$ to obtain
\begin{equation*}
\begin{aligned}
\sum \theta(a_{k}) &=  \sum_{i} \theta\left(\widetilde{a}_{k, i}\right) \\
&=  \left( \sum_{i} \theta\left(\widetilde{a}_{k, i}\right) + \sum_{\zeta \in \Crit^{\Int}(u)} \theta\left(\widetilde{c}_{\zeta}\right) \right) - \sum_{\zeta \in \Crit^{\Int}(u)} \theta\left(\widetilde{c}_{\zeta}\right) \\
&= \left( 2\pi - \frac{\pi}{2}\#\left(\widetilde{p}_{k}\right) + \frac{\pi}{2} \sum \ord_{u}\left(\widetilde{p}_{k}\right) + \pi \sum_{\zeta \in \Crit^{\partial}_{u}(\zeta)} \ord_{u}(\zeta)\right) + 2\pi \sum_{\zeta \in \Crit^{\Int}(u)} \left(\ord_{u}(\zeta) + 1\right) \\
&= 2\pi - \frac{\pi}{2}\#(p_{k}) + \frac{\pi}{2} \sum \ord_{u}(p_{k}) + \pi \sum_{\zeta \in \Crit^{\partial}_{u}(\zeta)} \ord_{u}(\zeta) + 2\pi \sum_{\zeta \in \Crit^{\Int}(u)} \ord_{u}(\zeta)
\end{aligned}
\end{equation*}
proving the theorem in full generality for $\Sigma = \disk$.

\begin{figure}[h]
\begin{overpic}[scale=.5]{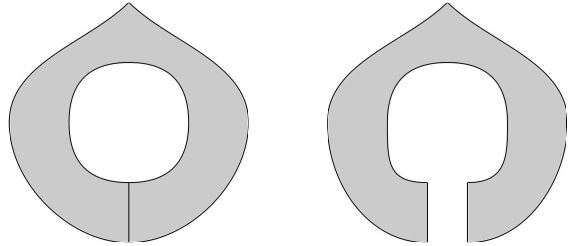}
\end{overpic}
\caption{Cutting up $\dot{\Sigma}$ using an arc basis to obtain $\widetilde{\dot{\Sigma}} \subset \disk$. The case of an annulus is shown, wherein there is only a single $b_{k}$.}
\label{Fig:PizzaArcBasis}
\end{figure}

To complete our calculation we must consider maps $u$ whose domain may have $\chi(\Sigma) < 1$. In this, let $b_{k}$ be an arc basis of $\dot{\Sigma}$ for which each $b_{k}$ is disjoint from $\Crit(u)$ and the boundary marked points $p_{k}$. By compactifying $\dot{\Sigma} \setminus (\cup b_{k})$ as shown in Figure \ref{Fig:PizzaArcBasis} we obtain a disk with $\#(p_{k}) + 4\dim(H_{1}(\Sigma))$ boundary punctures, whose complement we denote by $\widetilde{\Sigma}$. We also get a map $\widetilde{u}$, which we may consider as having boundary on an immersed multi-curve $\widetilde{\lambda}$ obtained by taking the union of $\lambda$ with simple closed curves containing the $u(b_{k})$. Following the above calculations of $\sum \theta(a_{k})$, we subtract $2\pi$ for each $b_{k}$. Thus,
\begin{equation*}
\frac{1}{\pi} \sum \theta(a_{k}) =  2\chi(\Sigma) - \half\#(p_{k}) + \half\sum \ord_{u}(p_{k}) + \sum_{\zeta \in \Crit^{\partial}(u)} \ord_{u}(\zeta) + 2\sum_{\zeta \in \Crit^{\Int}(u)} \ord_{u}(\zeta).
\end{equation*}
By combining this formula with Theorem \ref{Thm:IndMaslovTwoD}, the proof is complete.
\end{proof}

\section{Examples of holomorphic annuli}\label{Sec:AnnuliExamples}

In this section we study examples of holomorphic annuli on Legendrian links using our index calculations and analysis of cohomological defects. Our intentions are to
\be
\item demonstrate the utility of these tools, showing how the obstruction class can be used to count curves in certain restricted scenarios,
\item point out some obstacles in reducing counts of $\chi < 1$ holomorphic curves to combinatorics, and
\item provide contrast between moduli spaces of holomorphic curves associated to arbitrary Legendrian links, those which are right-simple, and those which are left-right-simple.
\ee

\subsection{An index $0$ annulus in $\ModSpace^{\Lambda}$}

We explicitly describe an $\ind(U) = 0$ annulus in $\ModSpace^{\Lambda}$. Although the existence of such annuli violate an expected dimension count, this example will help to us to understand some more subtle counting difficulties in Section \ref{Sec:ImpossibleCount} below.

\begin{figure}[h]
	\begin{overpic}[scale=.4]{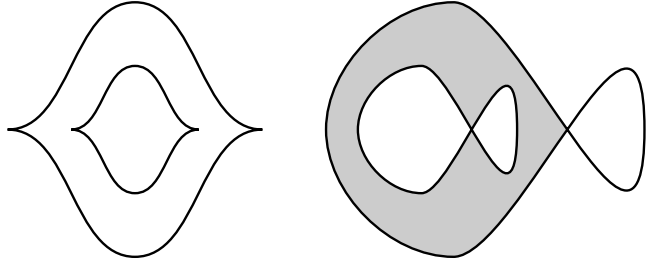}
	\end{overpic}
	\caption{A holomorphic annulus on the Lagrangian resolution of a pair of unknots.}
	\label{Fig:ConcentricUnknot}
\end{figure}

Consider the $2$-component unlink of Legendrian $\tb = -1$ unknots shown in Figure \ref{Fig:ConcentricUnknot}. Let's call this link $\Lambda_{0}$ and for $T \in \R$ write $\Lambda_{T}$ for the link obtained by translating the inner knot in the $t$-direction of $\R^{3}$ by $T\in \R$, while keeping the outer knot fixed. All of the $\Lambda_{T}$ have the same Lagrangian projections which will be denoted by $\lambda$. As shown in the right-hand side of the figure, there is a rigid holomorphic annulus on $\lambda$, one of whose boundary components is positively asymptotic to a self intersection of the outer component of $\lambda$. The other boundary component has two punctures negatively asymptotic to the single self-intersection of the inner component of $\lambda$.

Suppose that the annuli $\Sigma_{T}$ in the domain of these maps are isomorphic to $[-C, C]_{p} \times \Circle_{q}$ for some $C > 0$ with $\Circle = [0, 2\pi]/\sim$ and the $p = -C$ boundary component sent to the inner component of $\lambda$. Note that $C$ is constant in $T$ as the maps $u$ are unchanged by the variation in $\Lambda_{T}$. Let $t_{T} \in \Cinfty(\dot{\Sigma}_{T})$ be harmonic functions extending the Dirichlet boundary condition $t^{\partial}_{T} \in \Cinfty(\partial \dot{\Sigma}_{T})$ associated to each $\Lambda_{T}$. We apply a Fourier series expansion to the restriction of $t_{T}$ to each $\{ p = p_{0}\}$ to obtain the expression
\begin{equation}\label{Eq:tTof}
t_{T}(p, q) = \sum_{n \in \Z} f_{T, n}(p)e^{inq},\quad f_{T, n} \in \Cinfty([-C, C], \C), \quad f_{T, -n}(p) = \overline{f_{T, n}}(p).
\end{equation}
We compute the Laplacian of $t_{T}$ as
\begin{equation*}
\quad \Delta t_{T}(p, q) = \Big( \sum_{n \in \Z} \big( \frac{\partial^{2} f_{T, n}}{\partial p^{2}}(p) -n^{2}f_{T, n}(p) \big)e^{inq} \Big)dp\wedge dq = 0,
\end{equation*}
telling us that for all $T$, $f_{T, 0}$ must satisfy $\frac{\partial^{2} f_{T, 0}}{\partial p^{2}}(p) = 0$, and so be linear functions of $p$. We may then write $f_{T, 0}(p) = a_{T}p + b_{T}$ for some real-values constants (functions of the variable $T$), $a_{T}$ and $b_{T}$. Using this expression, we compute the integral of our obstruction class $\obstruction_{T} \in H^{1}(\Sigma_{T})$ as
\begin{equation*}
\begin{aligned}
\int_{\{p = 0\}} dt_{T}\circ j &= \int_{\{p = 0\}} \sum_{n \in \Z} e^{inq}\Big( \frac{\partial f_{T, n}}{\partial p}dp + in f_{T, n}dq\Big)\circ j \\
&= \int_{\{p = 0\}} \sum_{n \in \Z} e^{inq}\Big( -\frac{\partial f_{T, n}}{\partial p}dq + in f_{T, n}dp\Big) \\
&= \int_{\{p = 0\}} -\frac{\partial f_{T, 0}}{\partial p}dq = -2\pi a_{T} \\
&= \frac{1}{2C}\int_{\Sigma_{T}}-\frac{\partial f_{T, 0}}{\partial p}dp \wedge dq \\
&=\frac{1}{2C}\Big( \int_{p = -C}f_{T, 0}dq - \int_{p = C}f_{T, 0}dq  \Big) = \frac{1}{2C}\Big( \int_{p = -C}t^{\partial}_{T}dq - \int_{p = C}t^{\partial}_{T}dq  \Big).
\end{aligned}
\end{equation*}
At the third and fourth lines we use the fact that $f_{T, 0}$ is an affine function of $p$. In the last line, we use the fact that $\int_{p = \pm C} f_{T, n}e^{inq}dq = 0$ for each $n \neq 0$ together with the expression for $t_{T}$ in Equation \eqref{Eq:tTof}.

For $T > 0$ large enough that $\inf_{\{ p = -C\}}t^{\partial}_{T} > \sup_{\{ p = C \}}t^{\partial}_{T}$ the above expression will be positive, while for $T < 0$ small enough that $\sup_{\{ p = -C\}}t^{\partial}_{T} < \inf_{\{ p = C \}}t^{\partial}_{T}$ the above expression will be negative. Hence for some $T \in \R$, the obstruction class vanishes by the intermediate value theorem, and the annulus admits a holomorphic lift to the symplectization.

Using the above methods, we only have control over $\obstruction$ for $|T|$ large. We therefore cannot determine its behavior from only looking at the Lagrangian resolution of the front projection. Also problematic is the fact that (unbranched) multiple covers of a $\obstruction = 0$ annulus will also have $\ind(U) = 0$.

\subsection{A rigid holomorphic annulus in $\ModSpace^{\Lambda}$}\label{Sec:RigidUAnnulus}

To demonstrate that $\ind(U)=1$ holomorphic annuli can exist when using the simple fronts of \cite{Ng:ComputableInvariants}, we look to the link $\Lambda$ appearing in Figure \ref{Fig:RSimpleAnnulus}. On the right-hand side of the figure, we see a holomorphic annulus of index $1$ on the Lagrangian submanifold $\lambda = \pi_{\C}(\Lambda) \subset \C$. This annulus lives in a $1$-dimensional moduli space of such annuli, $\ModSpace^{\lambda}_{u}$ which we will shortly describe. Working under the assumption that $\obstruction$ is transverse to $0$, we will see that the signed count $\#(\obstruction^{-1}(0))$ of elements of $\ModSpace^{\lambda}$ which lift to holomorphic annuli in $\ModSpace^{\Lambda}/\R$ has absolute value $1$.

\begin{figure}[h]
	\begin{overpic}[scale=.4]{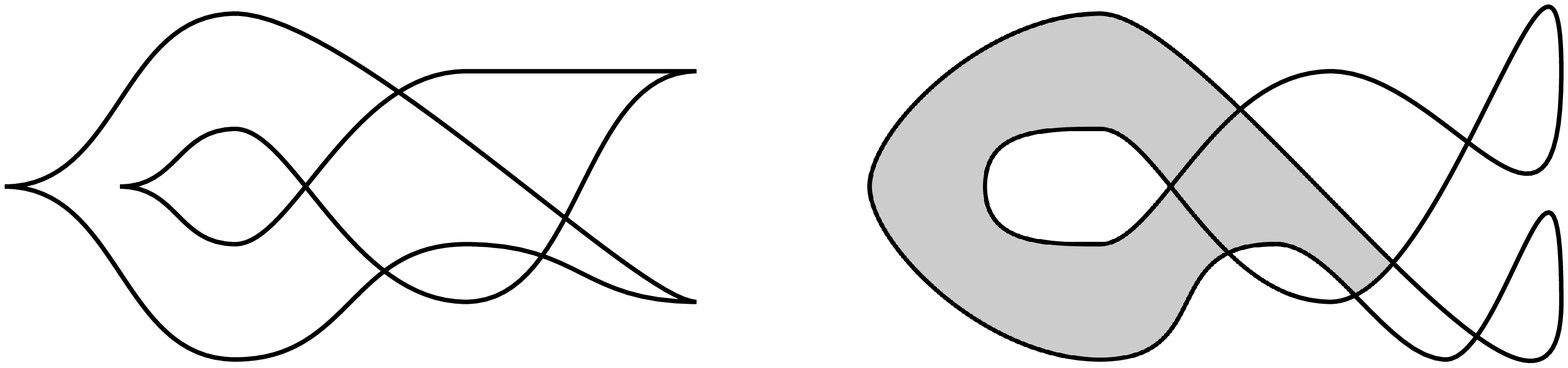}
		\put(78, 19){$\chord_{1}$}
		\put(70, 11){$\chord_{2}$}
		\put(78, 3){$\chord_{3}$}
	\end{overpic}
	\caption{An index $1$ annulus on the Lagrangian projection of a right-simple Legendrian link.}
	\label{Fig:RSimpleAnnulus}
\end{figure}

In Figure \ref{Fig:RSimpleAnnulusFamily}, we consider the moduli space $\ModSpace^{\lambda}_{u}$ which is parameterized by the image of a single critical point appearing on the boundary of the annulus. As in previous figures, we use dashed arcs to indicate where the image of the boundary of a curve double-covers $\lambda$. From our previous calculations, we expect $\dim(\ModSpace^{\lambda}_{u}) = 1$ and since $\dim H^{1}(\Sigma)$, we expect that $\bigO^{-1}(0)$ -- described in Theorem \ref{Thm:ObstructionCount} will be a collection of points.

\begin{figure}[h]
	\begin{overpic}[scale=.35]{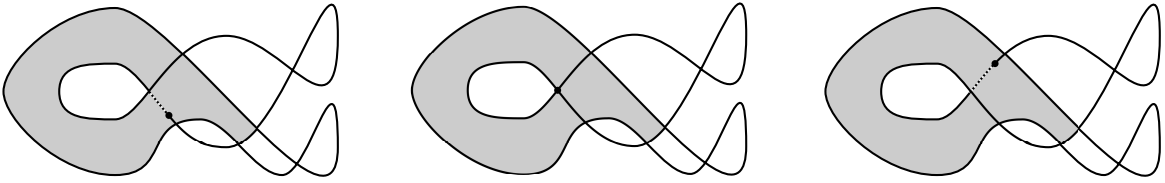}
	\end{overpic}
	\caption{The moduli space $\ModSpace_{u}$ is parameterized by the location of a single boundary branch point, shown as a thick dot.}
	\label{Fig:RSimpleAnnulusFamily}
\end{figure}

We seek to count the number of points in $\bigO^{-1}(0)$ algebraically as a kind of relative Euler number over $\ModSpace^{\lambda}_{u}$. Let $\gamma$ be the oriented loop in $\Sigma$ shown in the center of Figure \ref{Fig:RSimpleAnnulusNodal}. Identify $H^{1}(\Sigma)$ with $\R$ via the mapping $[\beta] \mapsto \int_{\gamma}\beta$ so that $\bigO$ may be viewed as a real number. 

\begin{figure}[h]
	\begin{overpic}[scale=.45]{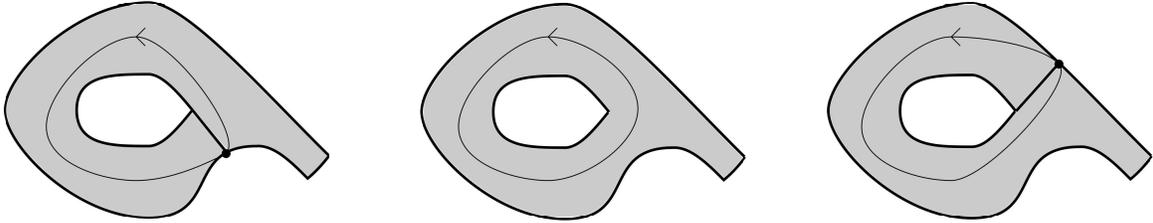}
	\end{overpic}
	\caption{Nodal degeneration of $\Sigma$.}
	\label{Fig:RSimpleAnnulusNodal}
\end{figure}

At one end of our moduli space shown in the left subfigure of Figure \ref{Fig:RSimpleAnnulusNodal}, $\Sigma$ degenerates into a nodal curve which can be viewed as a disk with two boundary punctures identified. A holomorphic map $u_{0}$ from this disk (with the node forgotten) will lift to the symplectization $\R \times \R^{3}$ with boundary on $\R \times \Lambda$ via a map $U_{0}$. Let $\gamma_{0}$ be the arc shown in the left-hand side of the figure. The integral of $dt \circ j$ of this arc may be computed
\begin{equation*}
\int_{\gamma_{0}} dt \circ j = \int_{\gamma_{0}} ds = \infty
\end{equation*}
as the arc begins at a negative puncture and ends at a positive puncture. We conclude that $\int_{[\gamma]} dt\circ j$ tends to infinity as the maps $u$ tend to the nodal limiting curve on the left-hand side of Figure \ref{Fig:RSimpleAnnulusNodal}.

A similar analysis may be applied to the right-hand side of the figure. The arc shown starts at a positive puncture of a map $U_{1}: \dot{\disk} \rightarrow \R \times \R^{3}$ and ends at negative puncture. We conclude that $\int_{[\gamma]} dt\circ j \rightarrow -\infty$ at this end of the moduli space. As $\bigO \rightarrow \infty$ at one end of our moduli space, and $\bigO \rightarrow -\infty$ at the end other end we conclude that the algebraic count of points in $\bigO^{-1}(0)$ is $\pm 1$. Hence at least $1$ point in $\ModSpace^{\lambda}_{u}$ must determine a rigid annulus in the associated moduli space $\ModSpace_{U}^{\Lambda}$.

\subsection{An impossible-to-count annulus}\label{Sec:ImpossibleCount}

We combine concepts of the preceding two examples to provide an example of a $1$-dimensional family of $u$ annuli for which $\obstruction^{-1}(0)$ is impossible to count from the Lagrangian projection.

\begin{figure}[h]
\begin{overpic}[scale=.5]{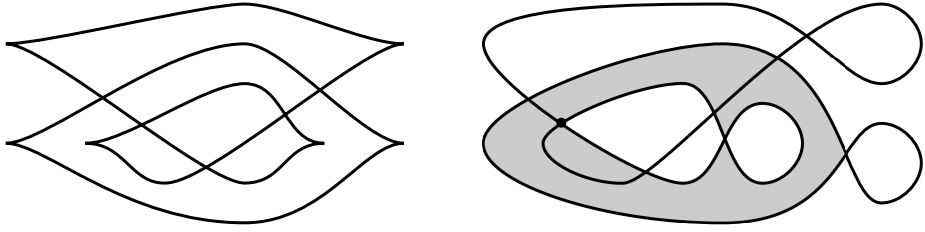}
\end{overpic}
\caption{A Legendrian link in the front and Lagrangian projections, together with the image of an $\ind(u) = 1$ annulus in $\C$.}
\label{Fig:BadAnnulusLink}
\end{figure}

Figure \ref{Fig:BadAnnulusLink} shows a Legendrian Hopf link in the front projection, together with its Lagrangian resolution. In the Lagrangian projection the image of a holomorphic map $u$ whose domain is an annulus is shown. The curve has $\ind(u) = 1$ and the associated moduli space $\ModSpace^{\lambda}_{u}$ may be parameterized by the location of a boundary branch point or -- at one point in the moduli space -- a non-convex corner, indicated by a dot in the figure. Let's say that $\ModSpace^{\lambda}_{u}$ is parameterized with a variable $T \in (0, 1)$.

\begin{figure}[h]
\begin{overpic}[scale=.5]{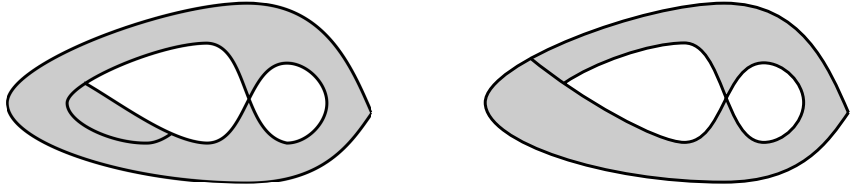}
\end{overpic}
\caption{$T \rightarrow 0$ and $T \rightarrow 1$ ends of the moduli space $\ModSpace^{\lambda}_{u}$.}
\label{Fig:BadAnnulusLinkCurves}
\end{figure}

As in the example of Section \ref{Sec:RigidUAnnulus}, we fix a homotopy class of simple closed curve $\gamma$ in the annulus and study $\int_{\gamma} \obstruction$ as the location of the branch point varies. At the end of the moduli space shown in the left-hand side of Figure \ref{Fig:BadAnnulusLinkCurves}, the annulus breaks into a rigid annulus and a disk. Let's say this is the $T \rightarrow 0$ end of $\ModSpace^{\lambda}_{u}$. As $\int_{\gamma} \obstruction$ is well defined over this new rigid annulus, we conclude that the limit $\lim_{T \rightarrow 0}\int_{\gamma} \obstruction(T)$ exists, yielding some $C \in \R$. Generically we may assume that $C \neq 0$.

The $T \rightarrow 1$ end of the moduli space is shown on the right-hand side of Figure \ref{Fig:BadAnnulusLinkCurves}. If, as in Figure \ref{Fig:RSimpleAnnulusNodal}, our curve $\gamma$ traverses the annulus counter-clockwise, then $\lim_{T \rightarrow 1}\int_{\gamma} \obstruction(T) = \infty$. Therefore we obtain
\begin{equation*}
|\#(\ModSpace_{U}^{\Lambda})| = \begin{cases}
1 & C < 0 \\
0 & C > 0.
\end{cases}
\end{equation*}
Unfortunately, it is impossible to read the sign of $C$ -- and so determine $|\#(\ModSpace_{U}^{\Lambda})|$ -- from only looking at the Lagrangian projection.

\subsection{Some non-rigid holomorphic annuli}\label{Sec:NonRigidUAnnulus}

Here we study a $1$-dimension family of annuli that reveal themselves when gluing together two Legendrian $RSFT$ disks at multiple punctures simultaneously. The Lagrangian projection $\lambda = \pi_{\C}(\Lambda)$ of a right-handed $\tb=1$ trefoil $\Lambda$ is depicted in Figure \ref{Fig:TrefoilDisks}. We again assume that $\obstruction$ is transverse to $0$.

\begin{figure}[h]
	\begin{overpic}[scale=.5]{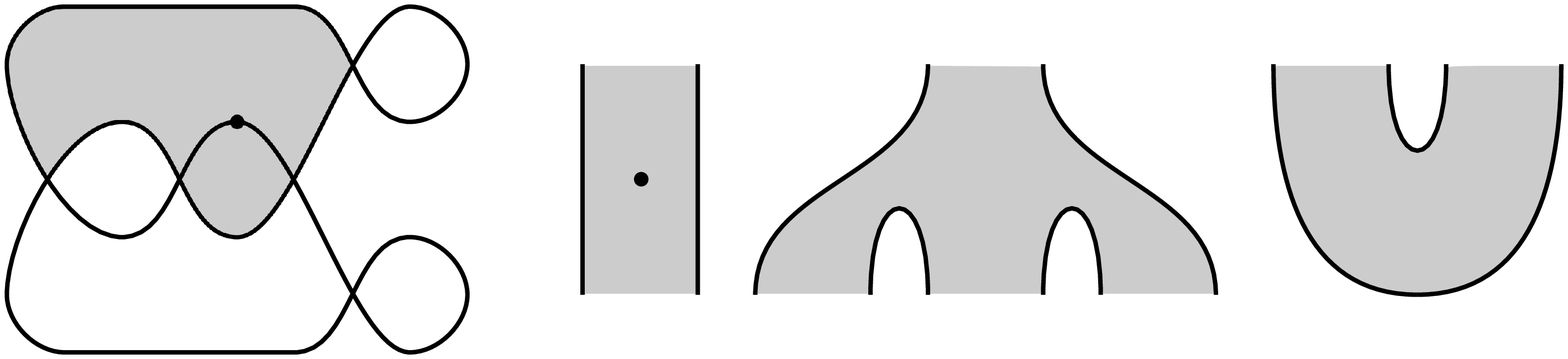}
		\put(-2, 10){$\chord_{1}$}
		\put(7, 10){$\chord_{2}$}
		\put(20, 10){$\chord_{3}$}
		\put(25, 18){$\chord_{4}$}
		\put(25, 3){$\chord_{5}$}
		\put(10, 17){$\region_{1}$}
		\put(13, 10){$\region_{2}$}
		\put(61, 10){$\region_{1}$}
		\put(89, 10){$\region_{2}$}
		
		\put(40, 20){$\chord_{4}$}
		\put(40, 0){$\chord_{1}$}
		
		\put(62, 20){$\chord_{4}$}
		\put(50, 0){$\chord_{1}$}
		\put(62, 0){$\chord_{2}$}
		\put(74, 0){$\chord_{3}$}
		
		\put(84, 20){$\chord_{2}$}
		\put(95, 20){$\chord_{3}$}
	\end{overpic}
	\caption{Holomorphic disks in $\R \times \R^{3}$ with boundary on $\R \times \Lambda$, projected to the $xy$-plane.}
	\label{Fig:TrefoilDisks}
\end{figure}

The trefoil has five chords labeled $\chord_{1},\dots, \chord_{5}$. There are two connected components of $\C \setminus \pi_{\C}(\Lambda)$ of interest to us, labeled $\region_{1}$ and $\region_{2}$. We can view the $\region_{i}$ as the interiors of images of holomorphic maps $u_{i}$ from disks with boundary punctures removed to the $xy$-plane. The union of the $\region_{i}$ also forms the image interior of the image of a holomorphic map, $u_{\bullet}$, whose domain is a disk.

The holomorphic maps $u_{\bullet}, u_{1}, u_{2}$ can be lifted to holomorphic maps 
\begin{equation*}
U_{\ast} = (s_{\ast}, t_{\ast}, u_{\ast}): \dot{\disk}_{\ast} \rightarrow \R \times \R \times \C,\quad U_{\ast}(\partial \dot{\disk}_{\ast}) \subset \R \times \Lambda
\end{equation*}
for $\ast = \bullet, 1, 2$. Here $\dot{\disk}_{\ast}$ is the unit disk in $\C$ with some collection of boundary punctures removed. Such lifts are unique up to translation in the $s$ coordinate and each disk has index $1$. The asymptotics for boundary punctures are indicated in the right half of Figure \ref{Fig:TrefoilDisks}. The map $U_{\bullet}$ is such that there a point in the interior of $\disk$ which intersects $\R \times \Lambda$, which is labeled with a dot. We assume that such an intersection is unique at which point $U (\disk)$ and $\R \times \Lambda$ meet transversely.

\begin{figure}[h]
	\begin{overpic}[scale=.5]{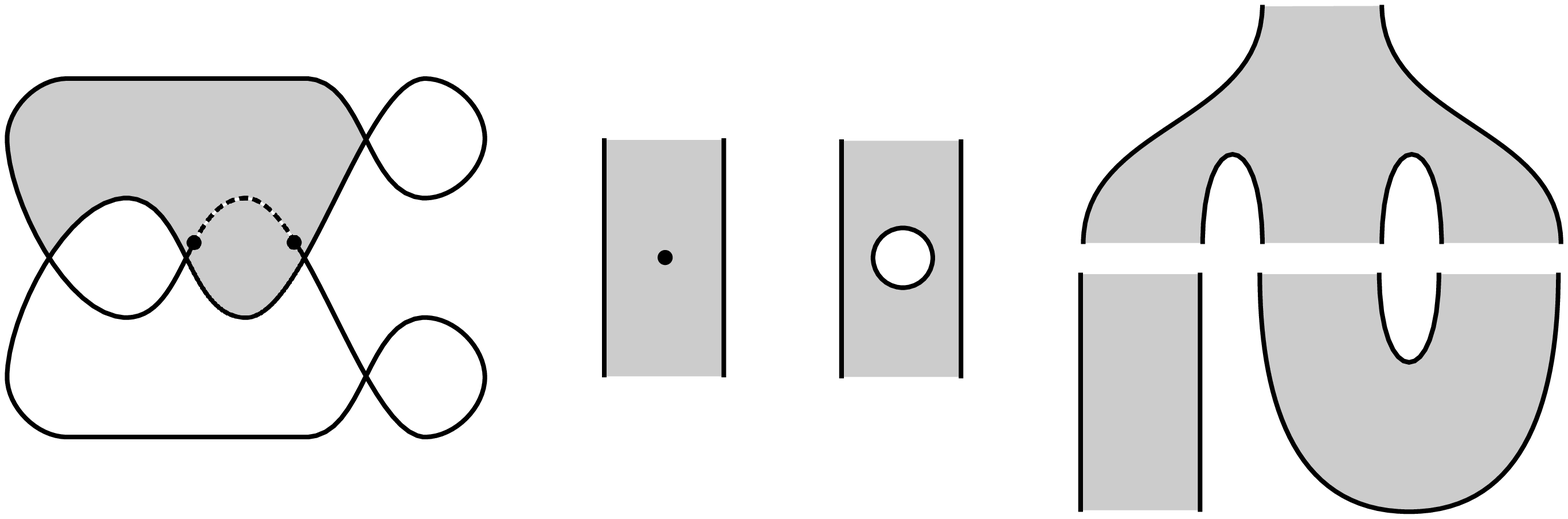}
	\end{overpic}
	\caption{The holomorphic maps $U_{T}$.}
	\label{Fig:TrefoilDiskGluings}
\end{figure}

If we glue the negative punctures of $U_{1}$ to the positive punctures of $U_{2}$ the positive punctures we expect to see a $2$-dimensional family of holomorphic maps from the annulus with two boundary punctures into $\R \times \R \times \C$. Modding out by translations in the $s$ direction, this yields a family of maps $U_{T}$ with 
\be
\item $\lim_{T\rightarrow \infty}U_{t}$ yielding the height $2$ holomorphic building determined by the gluing shown on the right-hand side of Figure \ref{Fig:TrefoilDiskGluings}.
\item $\lim_{T \rightarrow -\infty}U_{t}$ being the curve corresponding to the degeneration where one of the boundary components of the domain shrinks to a point. This curve has a removeable singularity which when filled in yields $U_{\bullet}$, as shown in the center-left of Figure \ref{Fig:TrefoilDiskGluings}.
\ee

Let $a$ denote the oriented arc 
\begin{equation*}
a = \overline{\region_{1}} \cap \overline{\region_{2}},\quad \partial a = \pi_{\C}(\chord_{2}) - \pi_{\C}(\chord_{3}).
\end{equation*}
The images of each $u_{T} = \pi_{\C} \circ U_{T}$ will appear as the image of $u_{\bullet}$ with a slit removed, with the slit properly contained in the arc $a$. The boundary of such a slit will have two boundary branch points -- the unique critical points of the $u_{T}$.

Now suppose that we identify $a \simeq [0, 1]$ and consider the $2$-parameter family of holomorphic maps
\begin{equation*}
u_{a_{0}, a_{1}}: \dot{\Sigma} \rightarrow \C,\quad a_{0} < a_{1} \in [0, 1]
\end{equation*}
where $\dot{\Sigma}$ is an annulus with two boundary punctures on the same boundary component. The maps $u_{a_{0}, a_{1}}$ have two boundary critical points contained in the component of the annulus without boundary punctures, with critical values $a_{i} \in [0, 1]$. The image of each $u_{a_{0}, a_{1}}$ is $\im(u_{\bullet})$ with the slit $(a_{0}, a_{1}) \subset a$ removed.

\begin{figure}[h]
	\vspace{3mm}
	\begin{overpic}[scale=.9]{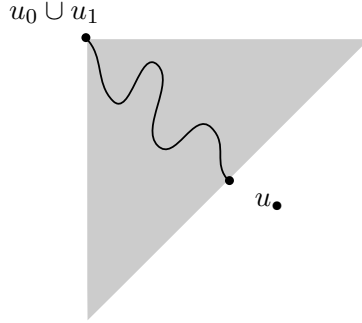}
		\put(-25, 105){$u_{0} \cup u_{1}$}
		\put(60, 40){$u_{\bullet}$}
	\end{overpic}
	\caption{The moduli space of $u_{a_{0}, a_{1}}$ is shown in gray, parameterized by the $a_{i}$ satisfying $0 < a_{0} < a_{1} < 1$. The collection of $u_{a_{0}, a_{1}}$ which we expect to lift to holomorphic maps $U_{T}$ is shown as the wiggly arc $b$. The boundary of the compactification of this arc contains the maps $u_{\bullet}$ (where $a_{0} = a_{1} \in \Int(a)$) and the union $u_{1} \cup u_{2}$ (where $0 =a_{0}, 1 = a_{1}$) in the boundary of the $u_{a_{0}}, a_{1}$ moduli space.}
	\label{Fig:UModuliSpace}
\end{figure}

The moduli space $\ModSpace_{a_{0}, a_{1}}$ of such $u_{a_{0}, a_{1}}$ is then an open triangle, naturally contained in the plane. The compactification of this space has $1$-dimensional strata provided by the subsets $\{ a_{0} =0 \}$, $\{ a_{1} = 1 \}$, and $\{ a_{0} = a_{1} \}$. There are at least two points in the boundary compactified moduli space $\overline{\ModSpace_{a_{0}, a_{1}}}$ which lift to holomorphic buildings in $\R^{4}$:
\be
\item The map $u_{0} \cup u_{1}$ lifts to the height $2$ holomorphic building $U_{1}
\cup U_{2}$ shown in the right most subfigure of Figure \ref{Fig:TrefoilDiskGluings}.
\item The map $u_{\bullet}$ lifts to the map $U_{\bullet}$ with an interior removable puncture deleted, shown in the center-left of Figure \ref{Fig:TrefoilDiskGluings}.
\ee

We expect -- assuming that $\obstruction$ is transverse at $0 \in \R$ -- that there should be a $1$-dimensional manifold $b \subset \ModSpace_{a_{0}, a_{1}}$ of maps which lift to the $U_{T}$ as shown. How can we see this using the obstruction class $\bigO$? Following the methodology of the previous example, we can integrate $\bigO$ over a loop $\gamma$ in $\Sigma$ which is parallel to the boundary component of $\Sigma$ without punctures.

Points in the boundary strata $\{ a_{0} \in (0, 1),\ a_{1} = 1\}$ of the compactified space $\overline{\ModSpace_{a_{0}, a_{1}}}$ may be viewed as nodal annuli obtained by identifying boundary punctures on the disk. These disks -- with the node removed -- lift to holomorphic maps in $\R^{4}$. Adding in the puncture then amounts to gluing a puncture which is negatively asymptotic to $\chord_{3}$ to a puncture which is positively asymptotic to $\chord_{3}$ in the domain. Because of our choice of orientation of $\gamma$ we see that $\int_{\gamma} dt\circ j \rightarrow \infty$ for points in $\ModSpace_{a_{0}, a_{1}}$ which are close to this boundary strata.

A similar argument shows that near the set $\{ a_{0} =  0,\ a_{1} \in (0, 1)\}$ in $\ModSpace_{a_{0}, a_{1}}$, the integral $\int_{\gamma} dt\circ j$ tends towards $-\infty$. Hence every every path in $\ModSpace_{a_{0}, a_{1}}$ of the form $a_{1} = a_{0} + \epsilon$ we see a count of $\pm 1$ holomorphic annuli in $\bigO^{-1}(0) = \ModSpace_{U}^{\Lambda}/\R$. Looking over all such paths as the parameter $\epsilon$ varies, we obtain the $1$-dimensional family shown by the black curve in Figure \ref{Fig:UModuliSpace}.

\subsection{Subtleties of compactified Dirichlet moduli spaces}\label{Sec:CompactifiedDirichlet}

Here we describe a moduli space of annuli which shows that the homeomorphism $\ModSpace^{\Dfancy} \rightarrow \ModSpace^{\lambda}$ of Proposition \ref{Prop:DirichletProjection} does not in general extend to a homeomorphism of the associated compactified moduli spaces. This behavior cannot be eliminated using our notions of simple diagrams.

\begin{figure}[h]
	\begin{overpic}[scale=.5]{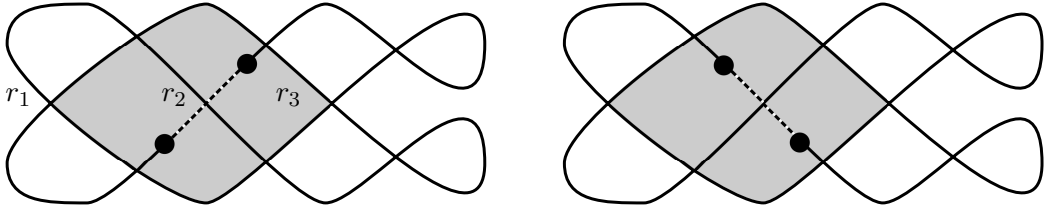}
		\put(0, 10){$\chord_{1}$}
		\put(15, 10){$\chord_{2}$}
		\put(26, 10){$\chord_{3}$}
	\end{overpic}
	\caption{Holomorphic annuli appearing with two boundary critical points.}
	\label{Fig:Compactification}
\end{figure}

Consider the holomorphic annuli in $\ModSpace^{\lambda}$ appearing in Figure \ref{Fig:Compactification}. The multicurve $\lambda$ depicted may be obtained from the Lagrangian resolution of a plat front for a left-right-simple Legendrian $\Lambda$ which is a stabilized unknot. The annuli have one boundary component with two boundary punctures which are positively asymptotic to double points associated to chords $\chord_{1}$ and $\chord_{3}$. The other boundary component of the annuli map to homotopically trivial curves in $\lambda$.

The associated moduli spaces $\ModSpace^{\lambda}$ and $\ModSpace^{\Dfancy}$ are homeomorphic to a disjoint union of two open triangles as described in the preceding subsection. The open codimension one strata of the associated compactified moduli spaces consists of
\be
\item nodal disks with a pair of identified nodes on the boundary of the disk. These occur when a single boundary critical point is pushed to the boundary of the image of a map in $\ModSpace^{\lambda}$.
\item half infinite cylinders with boundary punctures, or alternatively disks with two boundary punctures and a single interior puncture. These maps occur when the boundary critical points on the annulus coincide.
\ee
The codimension two boundary strata consists of limits of maps for which both boundary critical points tend towards the boundary of the image of a holomorphic map. We'll show that $\overline{\ModSpace^{\lambda}}$ is connected while $\overline{\ModSpace^{\Dfancy}}$ is not as depicted in Figure \ref{Fig:DisconnectedModuliSpace}, so that as claimed above there can be no homeomorphism $\overline{\ModSpace^{\Dfancy}} \rightarrow \overline{\ModSpace^{\lambda}}$.

\begin{figure}[h]
	\begin{overpic}[scale=.5]{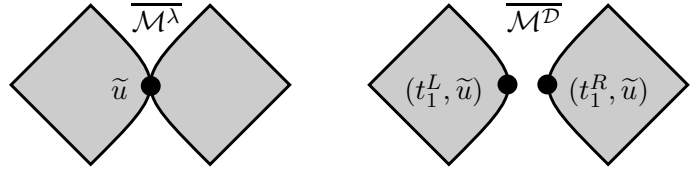}
		\put(18, 20){$\overline{\ModSpace^{\lambda}}$}
		\put(73, 20){$\overline{\ModSpace^{\Dfancy}}$}
		\put(15, 10){$\widetilde{u}$}
		\put(58, 10){$(t^{L}_{1}, \widetilde{u})$}
		\put(82, 10){$(t^{R}_{1}, \widetilde{u})$}
	\end{overpic}
	\caption{Compactified moduli spaces. The points associated to the map $\widetilde{u}$ discussed in the text are indicated by thickened dots.}
	\label{Fig:DisconnectedModuliSpace}
\end{figure}

Consider the punctured holomorphic disk $\widetilde{u} \in \overline{\ModSpace^{\lambda}}$ sending the interior marked point to the double point $\pi_{\C}(\chord_{2})$ of $\lambda$ associated to $\chord_{2}$. The map $\widetilde{u}$ can be seen as the $T \rightarrow 1$ limit of a family $\widetilde{u}^{L}_{T}, T \in (0, 1)$ of annuli shown in the left-hand side of Figure \ref{Fig:Compactification} with the boundary branch points both converging to $\pi_{\C}(\chord_{2})$. Likewise we can see $\widetilde{u}$ as a $T \rightarrow 1$ limit of annuli $\widetilde{u}^{R}_{T}, T \in (0, 1)$ as in the right-hand side of the figure, again with the boundary branch points converging to $\pi_{\C}(\chord_{2})$. Hence $\widetilde{u}$ lives in the boundary of the compactifications of each component of $\ModSpace^{\lambda}$, and so $\overline{\ModSpace^{\lambda}}$ is connected.

Now we show that $\overline{\ModSpace^{\Dfancy}}$ is disconnected. Let $t_{0}, t_{1}$ be the $t$-values endpoints of the chord $\chord_{2}$ of $\Lambda$ with $t_{0} < t_{1}$. If we lift the $\widetilde{u}^{L}_{T}$ to maps $(t^{L}_{T}, \widetilde{u}^{L}_{T}) \in \ModSpace^{\Dfancy}$, then in the $T \rightarrow 1$ limit we'll get a map $(t^{L}_{1}, \widetilde{u}^{L}_{1}) \in \overline{\ModSpace^{\Dfancy}}$ whose domain is a punctured disk. This map extends over the puncture, say $0 \in \disk$, so that $(t^{L}_{1}, \widetilde{u}^{L}_{1})(0) = (t_{0}, \pi_{\C}(\chord_{2}))$. This is a consequence of our conventions for crossings described in Section \ref{Sec:OvercrossingConvention}.

We can likewise lift the $\widetilde{u}^{L}_{R}$ to maps $(t^{L}_{R}, \widetilde{u}^{R}_{T}) \in \ModSpace^{\Dfancy}$ with a $T \rightarrow 1$ limit $(t^{R}_{1}, \widetilde{u}) \in \overline{\ModSpace^{\Dfancy}}$ whose domain is a punctured disk with interior puncture at $0 \in \disk$. Again the map extends over the puncture with $(t^{R}_{1}, \widetilde{u}^{R}_{1})(0) = (t_{1}, \pi_{\C}(\chord_{2}))$. Noting that both $(t^{L}_{1}, \widetilde{u})$ and $(t^{R}_{1}, \widetilde{u})$ project to the embedding $\widetilde{u} \in \overline{\ModSpace^{\lambda}}$ but take different values at the interior puncture, $0 \in \disk$, we conclude that $(t^{L}_{1}, \widetilde{u}) \neq (t^{R}_{1}, \widetilde{u})$. Hence $\overline{\ModSpace^{\Dfancy}}$ is disconnected.

We observe that at most one of $(t^{L}_{1}, \widetilde{u})$ and $(t^{R}_{1}, \widetilde{u})$ can lie in $\obstruction^{-1}(0)$. For if one such map exists, then the domain of the associated curve will be a disk with an interior puncture, determining a removable singularity. Extending the map $(s, t, \widetilde{u})$ over this puncture to obtain some $(s, t, u)$, the value of $t$ at $u^{-1}(\pi_{\C}(\chord_{2}))$ will be uniquely determined by the the values of $t$ along the boundary of the non-punctured disk. 

Generically -- within the space of Legendrian isotopies which preserve the isotopy class of $\lambda$ -- we can guarantee that neither of the $(t^{L}_{1}, \widetilde{u}), (t^{R}_{1}, \widetilde{u})$. To see this, we may keep the arcs of $\Lambda$ projecting to $\partial \im(\widetilde{u})$ fixed and perturb the $z$-value $\Lambda$ over the arcs projecting to the interior of $\im(\widetilde{u})$. Such perturbations will preserve the map $(s, t, u)$ and so the value of $t$ at $u^{-1}(\pi_{\C}(\chord_{2}))$.

Assuming transversality of $\obstruction$, an analysis similar to that of Section \ref{Sec:RigidUAnnulus} reveals that all points in $\partial \overline{\obstruction^{-1}(0)}$ will correspond to $1$-level SFT buildings whose domain is a disk with two boundary punctures positively asymptotic to $\chord_{1}$ and $\chord_{3}$ and a single, removable interior puncture. Likewise, a signed count of such curves can be seen to be $0$.

\subsection{Non-embedded annuli}

\begin{figure}[h]
	\begin{overpic}[scale=.3]{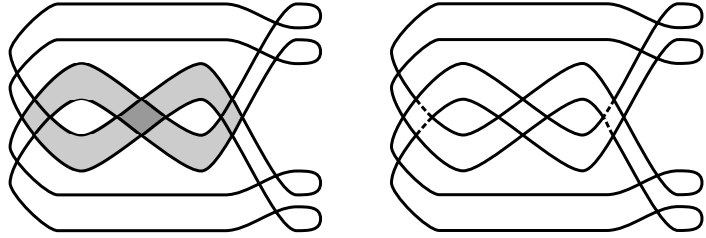}
	\end{overpic}
	\caption{On the left, a non-embedded annulus $u\in \ModSpace^{\lambda}$ on the Lagrangian resolution of the $2$-copy of the trefoil. On the right, a pair of dashed arcs parameterize the moduli space $\ModSpace^{\lambda}_{u}$.}
	\label{Fig:TrefoilTwoCopy}
\end{figure}

So far we have only considered examples of holomorphic annuli in $\ModSpace^{\lambda}$ which are embeddings when restricted to the interiors of their domains. Figure \ref{Fig:TrefoilTwoCopy} shows the Lagrangian resolution of a $2$-copy of the Legendrian trefoil of Section \ref{Sec:RigidUAnnulus}. On the left, we see a holomorphic annulus $u \in \ModSpace^{\lambda}$ with two boundary punctures on each connected component of the boundary of its domain. Two of the corners are non-convex and we may compute $\ind(u) = 2$. Clearly the annulus is not embedded in $\C$.

The moduli space $\ModSpace^{\lambda}_{u}$ is homeomorphic to an open square whose axes are parameterized by the images of boundary critical points or non-convex corners. These images must be contained in dashed arcs appearing in the right-hand side of Figure \ref{Fig:TrefoilTwoCopy}.

\begin{figure}[h]
	\begin{overpic}[scale=.3]{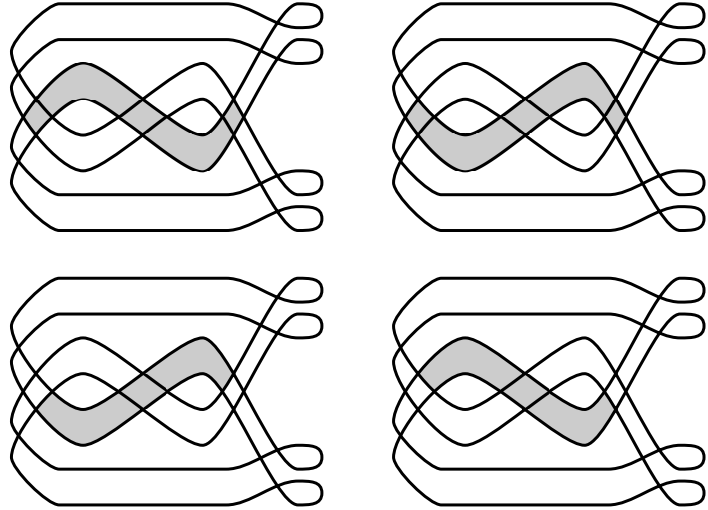}
	\end{overpic}
	\caption{Each column represents $2$-level SFT buildings of holomorphic disks appearing in $\partial \overline{\ModSpace^{\Lambda}}$ as Gromov limits of holomorphic annuli.}
	\label{Fig:TrefoilTwoCopyTwoLevel}
\end{figure}

Again, let's assume that $\obstruction$ is transverse to zero. An analysis analogous to that of Section \ref{Sec:RigidUAnnulus} reveals that $\partial \overline{\obstruction^{-1}(0)}$ consists of two points which are corners of the closed square $\overline{\ModSpace^{\lambda}_{u}}$. Each such point is associated to a $2$-level SFT building. The top level of each building is a holomorphic disk with two positive and two negative punctures. The bottom level of the building has two positive punctures which are glued to the negative punctures of the top level curve. Each column of Figure \ref{Fig:TrefoilTwoCopyTwoLevel} depicts such a $2$-level building.

\section{Restrictions on $J$-curves on left-right-simple $\Lambda$}\label{Sec:MainResults}

In this section we will prove Theorem \ref{Thm:Main}.

\begin{assump}
	Throughout this section we will take $\Lambda$ to be a left-right-simple Legendrian link in $\Rthree$ and use the notation $\lambda = \pi_{\C}(\Lambda) \subset \C$ throughout. We always assume that $u: \dot{\Sigma} \rightarrow \C$ is a holomorphic curve in $\ModSpace^{\lambda}$. The symbol $U$ will always denote a holomorphic curve $U = (s, t, u): \dot{\Sigma} \rightarrow \C$ in $\ModSpace^{\Lambda}$.
\end{assump}

We briefly outline the content of this section below while providing a broad overview of our proof:
\be
\item Section \ref{Sec:BoundaryPaths} sets up some notation and details basic restrictions on $u|_{\partial \dot{\Sigma}}$ imposed by the left-right-simple condition. 
\item Section \ref{Sec:AnotherIndex} uses the aforementioned restrictions to provide another means of computing Maslov indices which, unlike our previous computations, is specific to the left-right-simple setting. This will take care of Theorem \ref{Thm:Main} as it pertains to the case $\chi(\Sigma) < -1$.
\item Section \ref{Sec:DiskEmbeddings} establishes that all $\ind(u) = 0$ disks are embedded.
\item Section \ref{Sec:ChiOneElimination} bounds the indices of curves with $\chi(\Sigma) = 0, -1$.
\item Section \ref{Sec:AnnuliCriticalPoints} rules out the possibility of $\ind(u) = 2$ holomorphic annuli having interior critical points.
\item Section \ref{Sec:ProofCompletion} combines all of our results to complete our proof of Theorem \ref{Thm:Main}.
\ee

\subsection{Restrictions on boundary paths}\label{Sec:BoundaryPaths}

The foundation underlying the results of this section are restrictions which the left-right-simple condition impose on the rotation angles of paths in $\lambda$. We outline some terminology which will help speed up our arguments.

\begin{figure}[h]
	\begin{overpic}[scale=.5]{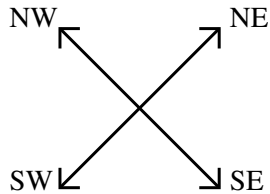}
		\put(-30, 0){SW}
		\put(105, 0){SE}
		\put(105, 100){NE}
		\put(-30, 100){NW}
	\end{overpic}
	\caption{Arc segments emanating from a self-intersection of $\lambda$.}
	\label{Fig:Directions}
\end{figure}

At each self-intersection of $\lambda$ there are four oriented arcs exiting the crossing at fixed angles, due to our requirement that $\Lambda$ is in good position. We call an arc-segment NE, NW, SW, or SE -- for northeast, northwest, southwest, or southeast -- according to Figure \ref{Fig:Directions}. Likewise, we say that a path in $\lambda$ which both begins and ends on self-intersections is, say, NE-SW if it exits its starting self-intersection from the NE and approaches its terminal self-intersection along the SW arc.

\begin{figure}[h]
\begin{overpic}[scale=.5]{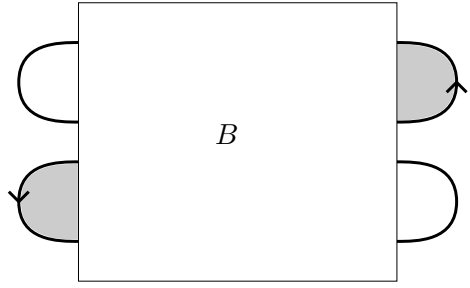}
\put(45, 30){$B$}
\end{overpic}
\caption{Ends of $\lambda$ determined by left and right pointing cusps, where $B$ is any positive braid with strange pointing left-to-right. The arrows indicate orientations on the boundaries of images of holomorphic maps.}
\label{Fig:Platification}
\end{figure}

We will be interested in paths in $\lambda$ which are the images of connected components of $\partial \dot{\Sigma}$ under holomorphic maps. We'd like to continue to think about the boundary behavior of $\ModSpace^{\lambda}$ curves as encoding the positivity and negativity of asymptotics of $\ModSpace^{\Lambda}$ curves.

\begin{prop}\label{Prop:Directions}
	If $u = \pi_{\C} \circ U$ for a holomorphic curve $U \in \ModSpace_{U}$ with $\Lambda$ left-right-simple, then for each connected component $\eta$ of $\partial \dot{\Sigma}$, inheriting its orientation from $\partial \Sigma$,
	\be
	\item if $\eta$ begins at a negative puncture, then $u(\eta)$ will exit the corresponding self-intersection of $\lambda$ in the NE or SW directions.
	\item if $\eta$ begins at a positive puncture, then $u(\eta)$ will exit the corresponding self-intersection of $\lambda$ in the NW or SE directions.
	\item if $\eta$ ends at a negative puncture, then $u(\eta)$ will enter the corresponding self-intersection of $\lambda$ in the NE or SW directions.
	\item if $\eta$ ends at a positive puncture, then $u(\eta)$ will enter the corresponding self-intersection of $\lambda$ in the NW or SE directions.
	\ee
\end{prop}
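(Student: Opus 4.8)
The plan is to reduce everything to a local analysis in a strip‑like neighborhood of a single boundary puncture, using the asymptotic input already invoked in the proof of Theorem~\ref{Thm:ObstructionCount}. Fix a component $\eta$ of $\partial\dot{\Sigma}$ and a boundary puncture $p$ that is one of its endpoints; let $z_{0}=u(p)$ be the corresponding self‑intersection of $\lambda$ and $c$ the associated chord of $\Lambda$. Since $U\in\ModSpace^{\Lambda}$, the puncture $p$ is asymptotic to $c$ with a definite sign ($+$ if $s\to+\infty$, $-$ if $s\to-\infty$ there), and in a holomorphic chart $[C_{0},\infty)_{C}\times[0,\pi]\subset\dot{\Sigma}$ around $p$ (the chart of Section~\ref{Sec:HoloUDefn}) the map $U$ converges exponentially as $C\to\infty$ to the trivial strip over $c$, by the analysis of \cite{RS:Strips} invoked through \cite{ENS:Orientations,DR:Lifting} exactly as in Theorem~\ref{Thm:ObstructionCount}. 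In particular $u$ is $C^{0}$‑close near $p$ to the constant map $z_{0}$, and the two boundary arcs $\{\theta=0\}$ and $\{\theta=\pi\}$ are carried by $u$ into small neighborhoods of $z_{0}$ inside the two strands of $\lambda$ through $z_{0}$; each such arc, being connected and terminating at $z_{0}$, lies entirely on a single strand near $p$.

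Next I would translate ``which strand'' into ``which compass direction'' using good position. By Definition~\ref{Def:GoodPosition}, in coordinates centered at $z_{0}$ the strand with the larger $t$‑value (the over‑strand) is the line of slope $-1$ and the other (the under‑strand) the line of slope $+1$; so ``$u(\eta)$ exits or enters $z_{0}$ in the NW or SE direction'' is synonymous with ``$u(\eta)$ lies on the over‑strand near $z_{0}$'', and likewise ``NE or SW'' $\Leftrightarrow$ ``under‑strand'' (compare Figure~\ref{Fig:Directions}). It remains to identify which of $\{\theta=0\},\{\theta=\pi\}$ lands on which strand, and this is where the sign of $p$ enters: because $U$ is $(j,J)$‑holomorphic with $J\partial_{s}=\partial_{t}$, the function $s+it$ is holomorphic in $C+i\theta$, so $\partial_{C}s=\partial_{\theta}t$; the restriction of $t$ to $\{C=\mathrm{const}\}$ runs, up to an exponentially small error, affinely between the two endpoint $t$‑values of $c$ as $\theta$ goes from $0$ to $\pi$, so $\partial_{\theta}t$, hence $\partial_{C}s$, has a fixed sign near $p$ equal to $\mathrm{sign}\big(t(C,\pi)-t(C,0)\big)$. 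Comparing with $s\to\pm\infty$ forces: at a positive puncture $t$ increases from $\theta=0$ to $\theta=\pi$, so the $\{\theta=\pi\}$ arc sits on the over‑strand and the $\{\theta=0\}$ arc on the under‑strand, while at a negative puncture the assignment is reversed.

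Finally I would match the two strip boundary arcs with the two boundary components of $\dot{\Sigma}$ adjacent to $p$: reading off the orientation induced on $\partial\Sigma$ in the $(C,\theta)$‑chart tells us which of $\{\theta=0\},\{\theta=\pi\}$ belongs to the component that \emph{ends} at $p$ and which to the component that \emph{begins} at $p$, and whether the oriented arc is heading toward $z_{0}$ (``enters'') or away from it (``exits''). Combining this with the strand assignment of the previous paragraph and the compass dictionary, all four cases of the proposition fall out at once.

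The routine ingredients are the exponential‑convergence statement (already cited) and the normal‑form bookkeeping of Definition~\ref{Def:GoodPosition}. The one place that needs care --- and the only place a $\frac{\pi}{2}$‑rotation error can creep in --- is the orientation accounting of the last paragraph: correctly pairing ``the $\{\theta=0\}$ side of the strip'' with ``the boundary component that ends at $p$'' under the induced orientation, and then with ``the over‑ or under‑strand'' via the sign of $p$. I would pin this down on a model example, such as a rigid embedded bigon (whose two punctures carry opposite signs), before writing it up. Note that only the good position hypothesis is used in an essential way here; left‑right‑simplicity, although standing throughout this section, does not enter this particular argument.
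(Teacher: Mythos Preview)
Your approach is correct, but it is considerably more elaborate than what the paper does. The paper dispatches this proposition in a single sentence: it simply observes that the claim is ``clear from looking at the under- and over-crossings of $\Lambda$ at a self-intersection of $\lambda$.'' In other words, the paper treats the statement as an immediate unpacking of Definition~\ref{Def:GoodPosition} (over-strand $=$ slope $-1$ $=$ NW/SE, under-strand $=$ slope $+1$ $=$ NE/SW) together with the meaning of positive versus negative puncture, and leaves the orientation check to the reader.

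What you have done instead is supply the actual analytical content behind that picture: strip-like coordinates, exponential convergence to the trivial strip, the Cauchy--Riemann identity $\partial_{C}s=\partial_{\theta}t$ forcing the sign of $\partial_{\theta}t$ to match the sign of the puncture, and then the compass translation. This is genuinely more informative and would be the honest way to write the proof if one were worried a reader might not see why the picture works. Your observation that left-right-simplicity is irrelevant here (only good position matters) is also correct and worth noting. The only loose end you flag yourself---the final orientation match between $\{\theta=0\},\{\theta=\pi\}$ and the ``begins at''/``ends at'' arcs---is indeed the one place to be careful, and your plan to check it against a model bigon is the right way to close it.
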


The above proposition is clear from looking at the under- and over-crossings of $\Lambda$ at a self-intersection of $\lambda$. When speaking of $\ModSpace^{\lambda}$ curves, we will continue to use the positive and negative puncture terminology as determined by the above proposition, even when a map $u$ is not determined by some $U$.

Next we address such paths which touch local maxima and minima of $x|_{\lambda}$.

\begin{prop}\label{Prop:BoundaryCusps}
	Let $u$ be a holomorphic map with domain $\dot{\Sigma}$ with $\lambda$ left-right-simple and let $\eta$ be a connected component of $\partial \dot{\Sigma}$ parameterized by a variable $T$, directed by the boundary orientation. If $u(\eta)$ touches a local minimum of $x|_{\lambda}$, then at this point $\frac{\partial y \circ u}{\partial T} < 0$ as shown on the left-hand side of Figure \ref{Fig:Platification}. If $u(\eta)$ touches a local maximum of $x|_{\lambda}$, then at this point $\frac{\partial y \circ u}{\partial T} > 0$ right-hand side of Figure \ref{Fig:Platification}.
\end{prop}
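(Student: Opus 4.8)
The plan is to deduce the proposition from the Hopf boundary point lemma applied to the harmonic function $x\circ u$, with the left-right-simple hypothesis supplying the global sign control needed to apply it. Throughout write $X = x\circ u$ and $Y = y\circ u$; since $u$ is holomorphic these are harmonic on $\dot\Sigma$, and we may assume $u$ is non-constant.

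First I would record the global pinching $x_L \le X \le x_R$ on all of $\Sigma$. Indeed, $\lambda$ is compact, so $x|_\lambda$ attains its global minimum at a local minimum and its global maximum at a local maximum; left-right-simplicity forces these extreme values to equal $x_L$ and $x_R$, whence $x_L \le x|_\lambda \le x_R$ pointwise. The function $X$ extends continuously over the boundary punctures of $\dot\Sigma$ --- which are asymptotic to double points of $\lambda$ --- to a continuous function on the compact surface $\Sigma$, harmonic on the interior, whose restriction to $\partial\Sigma$ lands in $[x_L,x_R]$; the maximum principle then gives $x_L \le X \le x_R$ on $\Sigma$. This is the one step that genuinely uses left-right-simplicity rather than merely good position.

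Now fix $z_0 \in \eta$ with $p_0 := u(z_0)$ a local minimum of $x|_\lambda$. Since $p_0$ is a smooth (non-double) point of $\lambda$, the point $z_0$ is a smooth boundary point of $\Sigma$, not a puncture. Set $w := X - x_L \ge 0$, harmonic with $w(z_0)=0$. By the strong maximum principle $w$ is either identically zero --- impossible as $u$ is non-constant --- or strictly positive on the interior of $\dot\Sigma$; and $\partial\Sigma$ is smooth at $z_0$, so it satisfies an interior ball condition there. The Hopf lemma therefore gives $\partial_\nu w(z_0) < 0$ for $\nu$ the outward unit normal. In a holomorphic boundary coordinate $z = \sigma + i\tau$ with $z_0 = 0$, $\dot\Sigma = \{\tau \ge 0\}$ and $\eta = \{\tau = 0\}$, the induced orientation on $\partial\Sigma$ (normalized as in the proof of Theorem~\ref{Thm:BranchIndex}, so that a holomorphic disk has boundary rotation $+2\pi$) runs in the $+\partial_\sigma$ direction and $\nu = -\partial_\tau$, so $\partial_T$ is a positive multiple of $\partial_\sigma$. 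Hence $\partial_\tau X(0) > 0$, and the Cauchy--Riemann identity $\partial_\tau X = -\partial_\sigma Y$ converts this into $\partial_T Y(z_0) < 0$, the claimed inequality at a local minimum. (As a byproduct $z_0$ is automatically a regular point of $u$: were $Tu(z_0) = 0$ every first derivative of $X$ would vanish at $z_0$, contradicting strictness.) The case where $p_0$ is a local maximum of $x|_\lambda$ is identical after replacing $w$ by $x_R - X$, which flips the final sign to $\partial_T Y(z_0) > 0$.

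The only real input is the Hopf lemma, so I do not expect serious difficulty; the step demanding the most care is the orientation bookkeeping in the last paragraph --- lining up the induced orientation of $\partial\Sigma$ with the Cauchy--Riemann equations so that the two inequalities emerge with the signs shown in Figure~\ref{Fig:Platification} --- together with making sure the pinching $x_L \le X \le x_R$ is invoked with the left-right-simple hypothesis, since good position alone would not suffice.
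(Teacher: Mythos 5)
Your argument is correct, but it takes a genuinely different route from the paper's. The paper treats the sign as read off from Figure~\ref{Fig:Platification} (the image of $u$ must lie on the bounded side of $\lambda$ at a leftmost or rightmost point of the diagram) and only argues the nonvanishing of $\frac{\partial (y\circ u)}{\partial T}$: if it vanished, $u$ would have a boundary critical point there and its image would spill over into the connected component of $\C\setminus\lambda$ of infinite area -- which lies just beyond $x_{L}$ (resp.\ $x_{R}$) precisely because of left-right-simplicity -- contradicting $\energy(u)<\infty$. You instead obtain the nonvanishing and the sign in one stroke from potential theory: the maximum principle pins $x\circ u$ into $[x_{L}, x_{R}]$ (this is where you use left-right-simplicity, playing the role that the infinite-area component plays in the paper), the Hopf boundary point lemma gives a strict inward normal derivative of $x\circ u$ at the boundary point, and the Cauchy--Riemann equations together with the boundary-orientation convention convert this into the stated sign of $\partial_{T}(y\circ u)$. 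Your route avoids the finite-energy/covering argument entirely (the asymptotics at the punctures built into $\ModSpace^{\lambda}$ suffice for the maximum principle step), turns the sign into a computation rather than an appeal to the figure, and yields as a byproduct that $u$ is immersed at such boundary points; the paper's argument is shorter and stays in the combinatorial-geometric style of the rest of the section. Your orientation bookkeeping (interior on the left, i.e., boundary rotation $+2\pi$ for an embedded disk, as in the proof of Theorem~\ref{Thm:BranchIndex}) matches the paper's conventions, so the two inequalities come out with the correct signs.
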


\begin{proof}
We work out the first statement regarding local minima of $x|_{\lambda}$. The proof of the second statement regarding local maxima is similar. Apart from looking at Figure \ref{Fig:Platification} to ensure that the signs of partial derivatives are correct, we only need to check that $\frac{\partial y \circ \eta}{\partial T}$ is non-zero when $u(\eta)$ touches a local minimum of $x|_{\lambda}$. If such a boundary critical point existed then the image of $u$ would spill over into the connected component of $\C \setminus \lambda$ of infinite area, which is impossible by our assumption that $\energy(u) < \infty$.
\end{proof}

\subsection{An index formula for left-right-simple diagrams}\label{Sec:AnotherIndex}

\begin{lemma}\label{Lemma:RotationAngleBounds}
Let $\eta= \eta(T)$ be a connected component of $\partial \dot{\Sigma}$ directed by the boundary orientation and let $u \in \ModSpace^{\lambda}$ be a holomorphic map with domain $\Sigma$ with $\lambda$ left-right-simple. If $\eta$ is not closed, then it may be parameterized by $T \in (0, 1)$ and we write $a_{i} \in \{ \pm \}, i=0, 1$ to indicate that the $T \rightarrow i$ punctures are positive or negative. With this notation, the rotation angle $\theta(u(\eta))$ is computed
\begin{equation*}
\theta(u(\eta)) = \frac{\pi}{2}\Big(\delta_{+, a_{1}} - \delta_{-, a_{0}} + 2\#(u(\eta)^{-1}(\Crit(x|_{\lambda}))) \Big)
\end{equation*}
where the $\delta$ are Kronecker deltas.
\end{lemma}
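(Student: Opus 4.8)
The plan is to compute the rotation angle directly, as the net increment $\widetilde{G}(1)-\widetilde{G}(0)$ of a continuous lift $\widetilde{G}$ of the Gauss map of $\lambda$ pulled back along $\eta$, after cutting $\eta$ at the preimages of $\Crit(x|_{\lambda})$ and controlling $\widetilde{G}$ on the resulting pieces. Parameterize $\eta$ by $T\in(0,1)$. Since $x|_{\lambda}$ is Morse and $\energy(u)<\infty$, the preimage $u(\eta)^{-1}(\Crit(x|_{\lambda}))$ is finite, say it consists of points $0<T_{1}<\cdots<T_{N}<1$ with $N=\#(u(\eta)^{-1}(\Crit(x|_{\lambda})))$; these lie in the interior of $\eta$ because in good position a self-intersection of $\lambda$ is transverse with both tangent lines diagonal, hence never a critical point of $x|_{\lambda}$. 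Cutting $\eta$ at the $T_{j}$ produces sub-arcs $\eta_{0},\dots,\eta_{N}$.

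The first observation is that there is no winding on any sub-arc. Indeed $u(\eta_{j})$ lies in a single connected component of $\lambda\setminus\Crit(x|_{\lambda})$, on which $dx$ has a fixed sign, so the tangent vector of $\lambda$ there always points into a fixed open half-plane; hence $\widetilde{G}$ stays in an open interval of length $\pi$ over $\eta_{j}$. This is undisturbed by boundary critical points of $u$ inside $\eta_{j}$ (a fold of $u(\eta)$ at a regular point of $\lambda$ only makes $\widetilde{G}$ approach and return to a non-vertical value), and by $u(\eta)$ passing through a self-intersection of $\lambda$ away from a puncture (there it stays on a single sheet, on which $\widetilde{G}$ is again non-vertical). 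So $\theta(u(\eta_{j}))$ is just $\widetilde{G}$ at the right endpoint of $\eta_{j}$ minus its value at the left endpoint, depending only on the tangent direction of $\lambda$ at those two points.

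It remains to evaluate $\widetilde{G}$ at the cut points. At an interior junction $T_{j}$, Proposition \ref{Prop:BoundaryCusps} forces the tangent of $u(\eta)$ to be vertical (pointing downward at a local minimum, upward at a local maximum of $x|_{\lambda}$), so $\widetilde{G}(T_{j})\in\frac{\pi}{2}+\pi\Z$; moreover the sign conditions in that proposition show $\widetilde{G}$ crosses this value in the strictly increasing sense, so consecutive sub-arcs alternate between $dx>0$ and $dx<0$ and, over all of $[0,1]$, the length-$\pi$ interval in which $\widetilde{G}$ lies is translated by exactly $N$ steps of $\pi$. At the endpoints $T=0,1$ the path $u(\eta)$ limits onto a self-intersection of $\lambda$, where good position makes its tangent diagonal, $\widetilde{G}\in\frac{\pi}{4}+\frac{\pi}{2}\Z$; Proposition \ref{Prop:Directions}, together with the boundary orientation of $\dot{\Sigma}$ and holomorphicity of $u$, pins down the offset of this value from the centre of the relevant interval, and the upshot is that the two endpoints jointly contribute $-\frac{\pi}{2}$ when the initial puncture is negative, $+\frac{\pi}{2}$ when the terminal puncture is positive, and $0$ otherwise.

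Writing $\widetilde{G}(0)$ and $\widetilde{G}(1)$ each as the centre of its length-$\pi$ interval plus the corresponding $\pm\frac{\pi}{4}$ offset, and using that these centres differ by $N\pi$, one adds the sub-arc rotations to obtain $\theta(u(\eta))=N\pi+\frac{\pi}{2}\big(\delta_{+,a_{1}}-\delta_{-,a_{0}}\big)$, which is the asserted identity. (As a sanity check, a monogon (a disk with one positive puncture whose boundary wraps once around a cusp region) has $a_{0}=a_{1}=+$, $N=1$, and the formula returns $\theta=\frac{3\pi}{2}=2\pi-\frac{\pi}{2}$, the $\frac{\pi}{2}$ being the exterior angle of its single convex corner.) I expect the one genuinely delicate step to be the last claim above: reading off from Proposition \ref{Prop:Directions} and the orientation conventions why the two $\pm\frac{\pi}{4}$ offsets combine into the asymmetric $\delta_{+,a_{1}}-\delta_{-,a_{0}}$ rather than something symmetric in $a_{0},a_{1}$. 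This is an elementary but sign-error-prone inspection of the four local pictures at a crossing; the decomposition, the half-plane confinement, and the translation-by-$N\pi$ count are routine given good position, Proposition \ref{Prop:BoundaryCusps}, and Equation \eqref{Eq:SimpleRotationAngle}.
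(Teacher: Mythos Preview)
Your proposal is correct and follows the same underlying decomposition as the paper: cut $\eta$ at the preimages of $\Crit(x|_{\lambda})$, control the rotation on each piece, and then read off the endpoint contributions using Propositions \ref{Prop:Directions} and \ref{Prop:BoundaryCusps}. The difference is in how the rotation on each sub-arc is controlled. The paper invokes homotopy invariance of the rotation angle and reduces every sub-arc to one of a finite list of explicit ``taut'' model paths, tabulated in two figures (one for sub-arcs ending at double points, one for sub-arcs touching $\Crit(x|_{\lambda})$), and then simply reads off the rotation angles case by case. Your half-plane confinement argument---that on a component of $\lambda\setminus\Crit(x|_{\lambda})$ the tangent never becomes vertical, so $\widetilde{G}$ is trapped in an interval of length $\pi$---accomplishes the same thing without the enumeration, and your observation that Proposition \ref{Prop:BoundaryCusps} forces $\widetilde{G}$ to cross the vertical value monotonically at each cut is exactly what underlies the paper's statement that ``the contribution of each subpath which begins at a local minimum of $x|_{\lambda}$ and ends at a local maximum (or vice versa) is clearly $\pi$.'' Your route is slightly cleaner conceptually; the paper's buys concreteness, since the figures make the endpoint sign analysis---which you correctly flag as the one delicate step---visible rather than leaving it as an inspection.
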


\begin{proof}
We use the boundary-relative homotopy invariance of the rotation angle to compute it from models.

\begin{figure}[h]
\begin{overpic}[scale=.35]{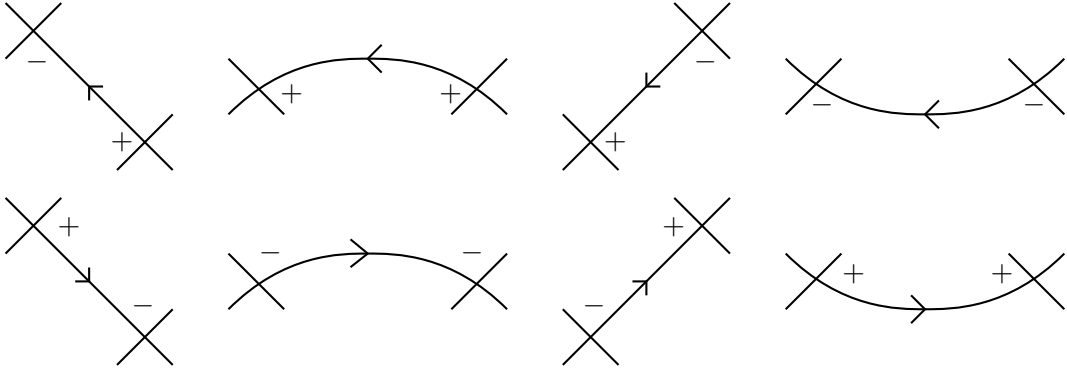}
\put(10, 20.5){$+$}
\put(2, 28){$-$}
\put(26, 25){$+$}
\put(41, 25){$+$}
\put(56.5, 20.5){$+$}
\put(65, 28){$-$}
\put(76, 24){$-$}
\put(96, 24){$-$}

\put(5, 12.5){$+$}
\put(12, 5){$-$}
\put(24, 10){$-$}
\put(43, 10){$-$}
\put(62, 12.5){$+$}
\put(54.5, 5){$-$}
\put(79, 8){$+$}
\put(93, 8){$+$}
\end{overpic}
\caption{Models for immersed paths which avoid local minima of $x|_{\lambda}$ and end at distinct self-intersections of $\lambda$. For left-to-right, the rotation angles in the first row are $0$, $\frac{\pi}{2}$, $0$, and $-\frac{\pi}{2}$. For the second row, the angles are $0$, $-\frac{\pi}{2}$, $0$, $\frac{\pi}{2}$.}
\label{Fig:CappingAwayFromMinMax}
\end{figure}

If $u(\eta)$ does not touch a local minima of $x|_{\lambda}$, then we can use one of the models from Figure \ref{Fig:CappingAwayFromMinMax} assuming the endpoints of the path correspond to distinct double points of $\lambda$. Each subfigure encodes the boundary angles and comparative $x$ values of immersed paths in $\lambda$ which avoid the local minima of $x|_{\lambda}$. Non-immersed paths can be ``pulled taut'' to eliminate any critical points of $x\circ u|_{\eta}$, so as to agree with one of these model paths. Any such deformation the path will leave not change the rotation angle or the sign of the associated puncture per Proposition \ref{Prop:Directions}. However pulling a path taut may change the direction that a path exists or enters a self-intersection of $\lambda$ from NE to SW, SW to NE, NW to SE, or SE to NW. Our formula for $\theta(u(\eta))$ may be verified by hand in each of these cases.

If the endpoints of the path correspond to the same double point and the path does not touch $\Crit(x|_{\lambda})$ then it must be homotopically trivial and have $0$ rotation angle. If $\eta$ is an open interval, then it must touch the double point at its ends with tangent vectors living in the same unoriented line at both of its ends. This Proposition \ref{Prop:BoundaryCusps} indicates that either $u(\eta)$  begins at a positive puncture and ends at a negative puncture or that it begins at a negative puncture and ends at a positive puncture. In either of these cases, the Kronecker delta terms in the statement of the current proposition cancel to yield $\theta(u(\eta)) = 0$ as we have shown to be true.

\begin{figure}[h]
\begin{overpic}[scale=.4]{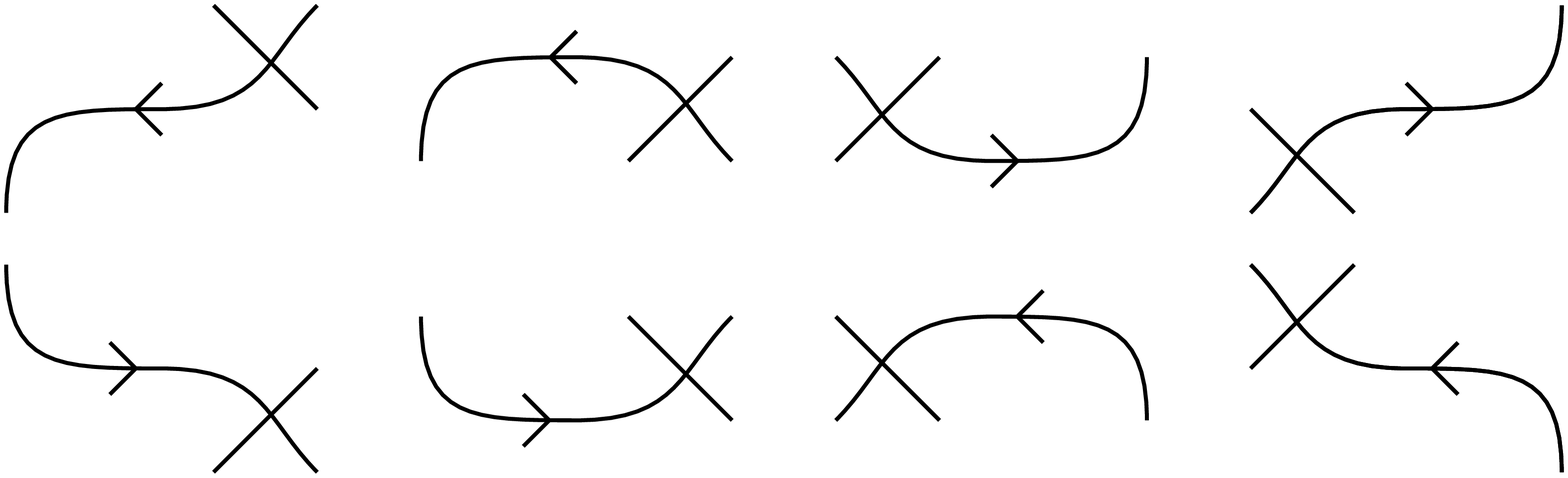}
\put(16, 22){$-$}
\put(16, 6){$-$}

\put(39, 6){$+$}
\put(39, 23){$+$}

\put(59, 23){$+$}
\put(59, 6){$+$}

\put(82, 23){$-$}
\put(82, 6){$-$}
\end{overpic}
\caption{Models for taught paths with one end on a double point of $\lambda$ and the other on $\Crit(x|_{\lambda})$. In the top row, paths begin at double points of $\lambda$ and end on points in $\Crit(x|_{\lambda})$. In the bottom row, paths begin at critical points of $x|_{\lambda}$ and end on double points of $\lambda$.}
\label{Fig:CappingTouchMin}
\end{figure}

In the case that $u|_{\eta}$ touches a local minimum of $x|_{\lambda}$ we look to Figure \ref{Fig:CappingTouchMin}. We cut the path into subpaths which begin or and at double points or $\Crit(x|_{\lambda})$. Each subpath is then pulled taught -- as described above -- so that $\frac{\partial u}{\partial T} \neq 0$ along the interior of the subpath. We then add up the contributions to rotation angle over each subpath. The figure encodes the relative $x$-values and endpoint type of each such subpath. The contribution of each subpath which begin at a local minimum of $x|_{\lambda}$ and and at a local maximum (or vice versa) is clearly $\pi$. The desired formula then follows.
\end{proof}

\begin{lemma}\label{Lemma:MaslovIndividualBoundary}
Suppose that $\partial \Sigma_{k}$ is a boundary component of $\Sigma$ and write $p^{\pm}_{i, k}$ for the boundary punctures of $\Sigma$ contained in $\partial \Sigma_{k}$ with superscripts indicating whether a puncture is positive or negative. Then for $\lambda$ left-right-simple, the Maslov index of $u|_{\partial \Sigma_{k}}$ may be computed
\begin{equation*}
\Maslov(u|_{\partial \Sigma_{k}}) + \#(p_{i, k}) = \#(u|_{\partial \Sigma_{k}}^{-1}(\Crit(x|_{\lambda}))) + \#(p_{i, k}^{+}).
\end{equation*}
\end{lemma}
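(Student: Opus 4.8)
The plan is to deduce the identity from the rotation‑angle expression \eqref{Eq:MaslovRotation} for the Maslov index by summing the local computation of Lemma \ref{Lemma:RotationAngleBounds} over the arcs of $\partial\dot{\Sigma}_{k}$. Write $q_{1},\dots,q_{n}$ for the punctures on $\partial\Sigma_{k}$ listed in the cyclic order induced by the boundary orientation, where $n=\#(p_{i,k})$, so that $\partial\dot{\Sigma}_{k}$ is cut into non‑closed arcs $\eta_{k,1},\dots,\eta_{k,n}$ with $\eta_{k,i}$ running from $q_{i}$ to $q_{i+1}$ (indices mod $n$). Before assembling I would record the elementary observation that, by good position (Definition \ref{Def:GoodPosition}), near a self‑intersection of $\lambda$ both strands are parameterized with $dx/dq=1$, so $x|_{\lambda}$ has no critical point over a puncture; hence $\#(u|_{\partial\Sigma_{k}}^{-1}(\Crit(x|_{\lambda})))=\sum_{i}\#(u|_{\eta_{k,i}}^{-1}(\Crit(x|_{\lambda})))$ with no double counting, and the punctures do not interfere with the critical‑point count.

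Next I would substitute Lemma \ref{Lemma:RotationAngleBounds} into \eqref{Eq:MaslovRotation}. Summing over $i$, the terms $2\#(u|_{\eta_{k,i}}^{-1}(\Crit(x|_{\lambda})))$ assemble into $2\#(u|_{\partial\Sigma_{k}}^{-1}(\Crit(x|_{\lambda})))$, while the Kronecker‑delta terms must be reorganized by puncture rather than by arc: each $q_{i}$ occurs as the ``$T\to 1$'' endpoint of $\eta_{k,i-1}$, contributing $\delta_{+,\sgn(q_{i})}$, and as the ``$T\to 0$'' endpoint of $\eta_{k,i}$, contributing $-\delta_{-,\sgn(q_{i})}$; hence $q_{i}$ contributes $+1$ if it is a positive puncture and $-1$ if it is negative, so the full delta‑sum equals $\#(p_{i,k}^{+})-\#(p_{i,k}^{-})$. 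The degenerate configurations — $n=1$, and arcs whose two endpoints lie over a single double point — are already absorbed into Lemma \ref{Lemma:RotationAngleBounds}, so they need no separate treatment. Plugging these into \eqref{Eq:MaslovRotation} and using $\#(p_{i,k})=\#(p_{i,k}^{+})+\#(p_{i,k}^{-})$ gives $\Maslov(u|_{\partial\Sigma_{k}})=\#(u|_{\partial\Sigma_{k}}^{-1}(\Crit(x|_{\lambda})))-\#(p_{i,k}^{-})$, which is exactly the claim after adding $\#(p_{i,k})$ to both sides.

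Finally, when $\partial\Sigma_{k}$ carries no punctures the component $\eta:=\partial\dot{\Sigma}_{k}$ is a closed loop and \eqref{Eq:MaslovRotation} reduces to $\Maslov(u|_{\partial\Sigma_{k}})=\frac{1}{\pi}\theta(u(\eta))$; here I would rerun the argument in the proof of Lemma \ref{Lemma:RotationAngleBounds}, cutting $\eta$ at its preimages of $\Crit(x|_{\lambda})$ and pulling each sub‑path taut. Left‑right‑simplicity forces the critical values met along $\eta$ to alternate between minima at $x_{L}$ and maxima at $x_{R}$ with $x_{L}<x_{R}$ (between consecutive critical points $x\circ u$ is monotone), so each sub‑path from a minimum to the following maximum, or conversely, has rotation angle exactly $\pi$; thus $\theta(u(\eta))=\pi\,\#(u|_{\partial\Sigma_{k}}^{-1}(\Crit(x|_{\lambda})))$ and the identity holds with $\#(p_{i,k})=\#(p_{i,k}^{+})=0$.

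I expect the only real obstacle to be bookkeeping: aligning the ``$T\to 0$''/``$T\to 1$'' conventions of Lemma \ref{Lemma:RotationAngleBounds} with the cyclic ordering of the punctures around $\partial\Sigma_{k}$, and confirming that the delta contributions regroup cleanly puncture‑by‑puncture (including in the $n=1$ case and for arcs with both ends over one double point). There is no analytic content beyond the rotation‑angle statements already established and the slope‑$\pm1$ normal form at crossings.
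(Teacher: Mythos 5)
Your proposal is correct and follows essentially the same route as the paper: substitute Lemma \ref{Lemma:RotationAngleBounds} into Equation \eqref{Eq:MaslovRotation} and sum over the boundary arcs, the only difference being that you regroup the Kronecker-delta contributions puncture-by-puncture (getting $\#(p^{+})-\#(p^{-})$) where the paper runs a four-case count by arc endpoint types, and you spell out the unpunctured closed-boundary case that the paper dismisses as clear. No gap.
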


\begin{proof}
We write $\eta_{i}$ for the connected components of $\partial \Sigma_{k}'$ and compute
\begin{equation}\label{Eq:MaslovToRotation}
\Maslov(u|_{\partial \Sigma_{k}}) + \#(p_{i, k}) = \frac{1}{\pi}\sum_{i}\Big( \theta(u|_{\eta_{i}}) + \frac{\pi}{2} \Big)
\end{equation}
using Equation \eqref{Eq:MaslovRotation}. If $\partial \dot{\Sigma}_{k}$ is a circle, then the proposition clearly follows from Lemma \ref{Lemma:RotationAngleBounds}.
	
If $\eta_{i}$ connect a negative punctures to a negative punctures, we may rephrase the first statement of Lemma \ref{Lemma:RotationAngleBounds} by writing
\begin{equation*}
\theta\left(u|_{\eta_{i}}\right) = \pi\#\left(u|_{\eta_{i}}^{-1}\left(\Crit\left(x|_{\lambda}\right)\right)\right) - \frac{\pi}{2}.
\end{equation*}
We apply analogous arguments to the remaining cases of the proposition.

In the case that $\eta_{i}$ connects a negative puncture to a positive puncture, then we can use the second statement in Lemma \ref{Lemma:RotationAngleBounds} to conclude that
\begin{equation*}
\theta\left(u|_{\eta_{i}}\right) + \frac{\pi}{2} = \frac{\pi}{2} + \pi\#\left(u|_{\eta_{i}}^{-1}\left(\Crit\left(x|_{\lambda}\right)\right)\right).
\end{equation*}
In the case that $\eta_{i}$ starts at a positive puncture and ends at a negative puncture, we can use the third statement in Lemma \ref{Lemma:RotationAngleBounds} to conclude that
\begin{equation*}
\theta\left(u|_{\eta_{i}}\right) + \frac{\pi}{2} = \frac{\pi}{2} + \pi\#\left(u|_{\eta_{i}}^{-1}\left(\Crit\left(x|_{\lambda}\right)\right)\right).
\end{equation*}
Finally, if $\eta_{i}$ connects two positive punctures, we can use the final case of Lemma \ref{Lemma:RotationAngleBounds} to conclude that
\begin{equation*}
\theta\left(u|_{\eta_{i}}\right) + \frac{\pi}{2} = \pi + \pi\#\left(u|_{\eta_{i}}^{-1}\left(\Crit\left(x|_{\lambda}\right)\right)\right).
\end{equation*}

We now plug the above equations into Equation \eqref{Eq:MaslovToRotation} to compute
\begin{equation*}
\Maslov\left(u|_{\partial \Sigma_{k}}\right) + \#\left(p_{i, k}\right) = \#\left(u|_{\partial \Sigma_{k}}^{-1}\left(\Crit\left(x|_{\lambda}\right)\right)\right) + \half \left( \#\left(- \rightarrow + \right) + \#\left(+ \rightarrow -\right) + 2\#\left(+ \rightarrow +\right)\right).
\end{equation*}
where the right-most summand counts the number of $\eta_{i}$ which end punctures with varying signs indicated by $+$ or $-$. This summand counts each positive puncture twice with multiplicity $\half$. Hence
\begin{equation*}
\half \left( \#\left(- \rightarrow + \right) + \#\left(+ \rightarrow -\right) + 2\#\left(+ \rightarrow +\right)\right) = \#\left(p_{i, k}^{+}\right),
\end{equation*}
completing the proof.
\end{proof}

We combine the results of the above proposition together with our standard index calculations and the fact that holomorphic maps have to have at least $1$ positive boundary puncture.

\begin{thm}\label{Thm:CritIndex}
When $\Lambda$ is left-right-simple, the index of a map $u$ with domain $\dot{\Sigma}$ may be computed
\begin{equation*}
\ind(u) = -2\chi(\Sigma) + \#\left(u|_{\partial \dot{\Sigma}}^{-1}\left(\Crit\left(x|_{\lambda}\right)\right)\right) + \#\left(p_{i}^{+}\right)
\end{equation*}
where the $p_{i}^{+}$ are the positive boundary punctures of $\Sigma$. Therefore the indices of maps $U$ satisfy
\begin{equation*}
\ind(U) = -\chi(\Sigma) + \#\left(u|_{\partial \dot{\Sigma}}^{-1}\left(\Crit\left(x|_{\lambda}\right)\right)\right) + \#\left(p_{i}^{+}\right)
\end{equation*}
\end{thm}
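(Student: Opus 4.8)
The plan is to obtain Theorem \ref{Thm:CritIndex} as a purely formal consequence of the preceding index computations, with the left-right-simple hypothesis entering only through Lemma \ref{Lemma:MaslovIndividualBoundary} (and, upstream of it, Lemma \ref{Lemma:RotationAngleBounds} together with Propositions \ref{Prop:Directions} and \ref{Prop:BoundaryCusps}). First I would start from the Maslov-index formula of Theorem \ref{Thm:IndMaslovTwoD},
\begin{equation*}
\ind(u) = \sum_{k=1}^{\#(\partial\Sigma)} \Maslov(u|_{\partial\Sigma_{k}}) - 2\chi(\Sigma) + \#(p_{i}),
\end{equation*}
and rewrite the Maslov sum by applying Lemma \ref{Lemma:MaslovIndividualBoundary} to each boundary component $\partial\Sigma_{k}$ separately.

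Next I would sum the identity $\Maslov(u|_{\partial\Sigma_{k}}) + \#(p_{i,k}) = \#(u|_{\partial\Sigma_{k}}^{-1}(\Crit(x|_{\lambda}))) + \#(p_{i,k}^{+})$ over $k$. Since the punctured boundary components $\partial\dot\Sigma_{k}$ partition $\partial\dot\Sigma$, the three partial counts telescope: $\sum_{k}\#(p_{i,k}) = \#(p_{i})$, $\sum_{k}\#(p_{i,k}^{+}) = \#(p_{i}^{+})$, and $\sum_{k}\#(u|_{\partial\Sigma_{k}}^{-1}(\Crit(x|_{\lambda}))) = \#(u|_{\partial\dot\Sigma}^{-1}(\Crit(x|_{\lambda})))$. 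Hence $\sum_{k}\Maslov(u|_{\partial\Sigma_{k}}) = \#(u|_{\partial\dot\Sigma}^{-1}(\Crit(x|_{\lambda}))) + \#(p_{i}^{+}) - \#(p_{i})$, and substituting this into the formula above the $\pm\#(p_{i})$ terms cancel, producing the first displayed equation. The formula for $\ind(U)$ then follows at once from Equation \eqref{Eq:IndProjComparison}, $\ind(U) = \ind(u) + \chi(\Sigma)$: adding $\chi(\Sigma)$ converts $-2\chi(\Sigma)$ into $-\chi(\Sigma)$ and leaves the remaining two summands untouched. I would also record the remark anticipated in the paragraph preceding the statement, namely that every such $u$ (equivalently $U$) has at least one positive boundary puncture, so that $\#(p_{i}^{+}) \geq 1$; for $U$ this is forced by finiteness of the Hofer energy together with the existence of at least one boundary puncture, and it passes to $u$ under $\pi_{\C}$. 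This observation is what makes the index bound sharp in the arguments of Sections \ref{Sec:DiskEmbeddings}--\ref{Sec:AnnuliCriticalPoints}.

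Honestly there is no real obstacle at the level of this theorem: all of the analytic and combinatorial content has already been discharged in Theorem \ref{Thm:IndMaslovTwoD}, Equation \eqref{Eq:IndProjComparison}, and Lemma \ref{Lemma:MaslovIndividualBoundary}, so the proof is a two-line substitution. The only point deserving care is the bookkeeping in the summation step — one must make sure the ``local'' puncture counts $\#(p_{i,k})$, $\#(p_{i,k}^{+})$ aggregate correctly to the ``global'' counts without double-counting, and that closed boundary components (which carry no punctures) are correctly accommodated by Lemma \ref{Lemma:MaslovIndividualBoundary}, which was stated to cover exactly that case.
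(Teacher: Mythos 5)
Your proposal is correct and matches the paper's argument: the paper likewise obtains Theorem \ref{Thm:CritIndex} by combining Lemma \ref{Lemma:MaslovIndividualBoundary} (summed over boundary components) with Theorem \ref{Thm:IndMaslovTwoD} and then applying Equation \eqref{Eq:IndProjComparison}, and your bookkeeping of the puncture and critical-point counts is exactly the intended substitution. The remark about at least one positive puncture is, as in the paper, used only for the corollaries that follow rather than in the proof of the formula itself.
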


Some immediate consequences of the above theorem follow:
\be
\item If $\Sigma = \disk$, all $\ind(u)=0$ curves have $\leq 2$ positive punctures and all $\ind(u) = 1$ curves $u$ have $\leq 3$ positive punctures.
\item If $\Sigma$ is an annulus, then $\ind(u) \geq 1$ as there must be at least one positive boundary puncture.
\item If $\chi(\Sigma) < 0$, then 
\begin{equation*}
\ind(U) = \ind(u) + \chi(\Sigma) \geq -\chi(\Sigma) + 1.
\end{equation*}
In particular a $\chi(\Sigma) = -1$ curve has $\ind(U) \geq 2$ and a curve with $\chi(\Sigma) < -1$ has $\ind(U) \geq 3$.
\ee

Furthermore, it's easy to prove the following statement appearing in Theorem \ref{Thm:Main}:

\begin{cor}\label{Cor:FiniteDisks}
The reduced moduli space $\ModSpace^{\Lambda}_{1}/\R$ of $\ind(U) = 1$ holomorphic curves consists of a finite number of points for $\Lambda$ left-right-simple.
\end{cor}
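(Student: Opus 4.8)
The plan is to combine Theorem~\ref{Thm:CritIndex} with the action--energy identity \eqref{Eq:ActionEnergy} to cut the statement down to finitely many combinatorial possibilities, and then settle finiteness for each. First I would apply Theorem~\ref{Thm:CritIndex}: if $\ind(U) = 1$ then
\begin{equation*}
-\chi(\Sigma) + \#\big(u|_{\partial\dot\Sigma}^{-1}(\Crit(x|_{\lambda}))\big) + \#(p_{i}^{+}) = 1 .
\end{equation*}
Every summand on the left is a non-negative integer, and $\#(p_{i}^{+}) \geq 1$ since a non-constant holomorphic $u$ has at least one positive puncture (otherwise $\energy(u) = -\sum_{j} \action(\chord_{j}^{-}) \leq 0$, forcing $u$ constant). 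Hence $-\chi(\Sigma) \leq 0$, so $\Sigma$ is a disk or an annulus, and moreover $\#(p_{i}^{+}) \leq 2$ while $u|_{\partial\dot\Sigma}$ meets $\Crit(x|_{\lambda})$ at most once. Maps with $u$ constant are trivial strips over chords (together with their multiply-stranded analogues) and have $\ind(U) \leq 0$, so they do not lie in $\ModSpace^{\Lambda}_{1}$ and may be discarded.

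Next I would bound the number of negative punctures. Holomorphicity of $u$ gives $\energy(u) = \int_{\Sigma} u^{\ast}(dx\wedge dy) \geq 0$, so \eqref{Eq:ActionEnergy} yields $\sum_{j}\action(\chord_{j}^{-}) \leq \sum_{j}\action(\chord_{j}^{+})$. Since $\Lambda$ is chord generic there are constants $0 < A_{\min} \leq A_{\max}$ with $A_{\min} \leq \action(\chord) \leq A_{\max}$ for every chord of $\Lambda$, whence $\#(p_{i}^{-})\, A_{\min} \leq 2 A_{\max}$, so the total number of boundary punctures is bounded by a constant depending only on $\Lambda$. Combined with the previous paragraph, only finitely many decorated surfaces $\fatSigma$ (a disk or an annulus carrying a bounded number of marked points) and finitely many assignments of chords of $\Lambda$ to the punctures of $\fatSigma$ can be realized by an $\ind(U) = 1$ curve.

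It then remains to check that, for each such $\fatSigma$ and chord assignment, the corresponding part of $\ModSpace^{\Lambda}_{1}/\R$ is finite. By Theorem~\ref{Thm:ObstructionCount} this part is identified with $\obstruction^{-1}(0) \subset \ModSpace^{\lambda}$, inside a space of expected dimension $\ind(u) = 1 - \chi(\Sigma)$. When $\Sigma = \disk$ we have $H^{1}(\Sigma) = 0$, so $\obstruction \equiv 0$ and the relevant set is all of $\ModSpace^{\lambda}$, of expected dimension $0$; by Theorem~\ref{Thm:BranchIndex} such an $\ind(u) = 0$ curve is an immersion with every boundary puncture of order $0$, hence an admissible combinatorial disk on $\lambda$ in the sense of \cite{Chekanov:LCH, ENS:Orientations}, of which there are only finitely many (a rigid immersed disk is determined by its boundary word and corner data, of bounded length). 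When $\Sigma$ is an annulus, $\ModSpace^{\lambda}$ has expected dimension $1$ and $\obstruction\colon \ModSpace^{\lambda}\to H^{1}(\Sigma)\simeq \R$; here I would argue that $\obstruction^{-1}(0)$ is discrete after a generic, left--right-simplicity-preserving perturbation of $\Lambda$ making $\obstruction$ transverse to $0$, and that it is compact by Gromov compactness: a sequence of $\ind(U) = 1$ annuli cannot converge to a building with two or more non-trivial levels, since by Theorem~\ref{Thm:CritIndex} every holomorphic curve on a left--right-simple $\Lambda$ has index $\geq 1$, so the total index of such a building would be $\geq 2$; the remaining degenerations pinch the domain to a nodal disk, and these configurations are finite in number and are analyzed in Sections~\ref{Sec:ChiOneElimination}--\ref{Sec:AnnuliCriticalPoints}, which in fact prove the stronger fact that no $\ind(U) = 1$ annulus exists. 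Assembling the two cases, $\ModSpace^{\Lambda}_{1}/\R$ is a finite union of finite sets.

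The bookkeeping of the first two paragraphs and the disk case are routine; the real work is the annulus case, where one must simultaneously control the zero locus of $\obstruction$ inside a positive-dimensional moduli space (transversality, with $J$ held fixed) and exclude every boundary degeneration of such annuli. So the main obstacle is precisely this annulus analysis, and it is exactly the input supplied by the later subsections — which is why, if one wishes to keep the present argument self-contained, a direct compactness argument as sketched above is needed, whereas otherwise the annulus case follows a fortiori from the non-existence result proved in Section~\ref{Sec:ProofCompletion}.
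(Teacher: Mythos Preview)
Your first two paragraphs (reduction to disks or annuli with at most two positive punctures, and the action bound on the number of negative punctures) are correct and match the paper's reasoning exactly. Your disk case, which appeals to the combinatorial finiteness of admissible immersed disks on $\lambda$, is essentially the paper's \emph{alternate} proof in Section~\ref{Sec:DiskEmbeddings}; the paper's primary proof instead applies Gromov compactness directly: with finitely many asymptotic configurations, each yields a discrete compact set, hence a finite one. Notably, the paper's proof at this location only addresses disks and does not discuss annuli at all, silently deferring their non-existence to Proposition~\ref{Prop:NoAnnuli}. You are more explicit about this.

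There is, however, a genuine gap in your annulus paragraph. The assertion that one may perturb $\Lambda$ within the left-right-simple class so that $\obstruction$ becomes transverse to $0$ is not established anywhere in the paper; in Section~\ref{Sec:AnnuliExamples} the paper works under the hypothesis ``assuming $\obstruction$ is transverse to $0$'' without ever proving this can be arranged for fixed $J$. So this step cannot stand on its own. Your fallback to the forward reference is the correct move: Proposition~\ref{Prop:NoAnnuli} shows no $\ind(U)=1$ annuli exist, and that is how the statement is ultimately justified. A minor correction to your compactness sketch: the claim ``by Theorem~\ref{Thm:CritIndex} every holomorphic curve has $\ind(U)\geq 1$'' does not follow from Theorem~\ref{Thm:CritIndex} alone for disks (that formula allows $\ind(U)=0$ when $\#(p_i^+)=1$ and the boundary misses $\Crit(x|_\lambda)$); you need Theorem~\ref{Thm:BranchIndex} (giving $\ind(u)\geq 0$) together with $\ind(U)=\ind(u)+\chi(\Sigma)$ to conclude $\ind(U)\geq 1$ for disks.
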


\begin{proof}
	The collection of such disks is a discrete set up to biholomorphic reparameterization. There is a finite number of configurations of positive and negative asymptotics along the boundary of a disk. This follows from the facts that such a disk has at most two positive punctures, there are a finite number of double points of $\lambda$, and given a prescribed set of positive punctures there is a finite collection of possible choices of negative punctures due to the fact that a holomorphic disk must have positive area. 
	
	The collection of holomorphic disks with a given configuration of prescribed boundary asymptotics is also finite. Indeed, Gromov compactness \cite{SFTCompactness} tells us that (the closure of) this discreet space is compact, and hence finite.
\end{proof}

Holomorphic disks with $\ind(u) = 0$ will be further analyzed below in Section \ref{Sec:DiskEmbeddings}. There we will be provide an alternate, combinatorial proof of Corollary \ref{Cor:FiniteDisks}. In order to prove Theorem \ref{Thm:Main}, we will need to rule out the existence of $\ind(u) = 1$ annuli and $\ind(u) = 3$ curves with $\chi(\Sigma) = -1$. The required analysis is carried out in Section \ref{Sec:ChiOneElimination}.

\subsection{Embeddedness and finiteness of $\ind(u) =0$ disks}\label{Sec:DiskEmbeddings}

Here we will use Theorem \ref{Thm:CritIndex} together with the following lemma to establish that all $\ind(u) = 0$ holomorphic disks are embeddings into $\C$. We also reprove that there are only finitely many rigid disks without appealing to Gromov compactness \cite{SFTCompactness}.

\begin{lemma}\label{Lemma:DiskEmbedding}
	Suppose that $u: \disk \rightarrow \C$ is an immersion for which $x\circ u|_{\partial \disk}$ has exactly one local maxima and one local minima. Then $u$ is an embedding.
\end{lemma}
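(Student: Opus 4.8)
The plan is to use only that $u\colon\disk\to\C$ is a local diffeomorphism, together with the boundary hypothesis, to analyze the level sets of the function $f := x\circ u$, and to prove directly that $u$ is injective; embeddedness then follows since $\disk$ is compact.

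First I would note that $df = dx\circ du$ is nowhere zero on $\disk$, because $du_z$ is invertible for every $z$. Hence $f$ has no critical points, so it attains its minimum $x_L$ and maximum $x_R$ over $\disk$ only on $\partial\disk$, at the respective unique points $p_{\min},p_{\max}$ supplied by the hypothesis; moreover $f$ is strictly monotone on each of the two arcs $A^{\pm}$ of $\partial\disk$ cut out by $p_{\min}$ and $p_{\max}$ (a failure of monotonicity on either arc would force an extra local extremum of $f|_{\partial\disk}$). Consequently the fibre $f^{-1}(x_0)$ is: the single point $p_{\min}$ when $x_0=x_L$; the single point $p_{\max}$ when $x_0=x_R$; empty when $x_0\notin[x_L,x_R]$; and, for $x_0\in(x_L,x_R)$, a compact $1$-manifold with exactly two boundary points $p^{+}\in A^{+}$ and $p^{-}\in A^{-}$. (That $x_0$ is a regular value of $f|_{\partial\disk}$ in the last case follows from Morseness of $x\circ u|_{\partial\disk}$, which holds in our setting since $x|_{\Lambda}$ is Morse; I would record this point.)

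The key observation is that $u$ is injective on every fibre of $f$. Take a regular parametrization $\gamma$ of a connected component $C$ of $f^{-1}(x_0)$. Since $u\circ\gamma$ takes values in the vertical line $\{x=x_0\}$, its derivative is $\bigl(0,(y\circ u\circ\gamma)'\bigr)$; but this derivative equals $du(\gamma')\ne 0$, so $(y\circ u\circ\gamma)'$ never vanishes and $y\circ u\circ\gamma$ is strictly monotone. In particular $C$ cannot be a circle (a strictly monotone function is not periodic), so for $x_0\in(x_L,x_R)$ the $1$-manifold $f^{-1}(x_0)$, having exactly two boundary points and no closed components, is a single arc; and strict monotonicity of $y\circ u$ along it makes $u|_{f^{-1}(x_0)}$ injective, the cases $x_0\in\{x_L,x_R\}$ being trivial. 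Now if $u(a)=u(b)$ then $f(a)=f(b)=:x_0$, so $a$ and $b$ lie in the single fibre $f^{-1}(x_0)$, whence $a=b$. Thus $u$ is injective, and an injective immersion out of a compact manifold is an embedding.

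The only real obstacle is the boundary bookkeeping: verifying that the two boundary arcs carry strictly monotone restrictions of $f$ and that the interior level sets meet $\partial\disk$ transversally, which is exactly where a Morse/genericity hypothesis on $x\circ u|_{\partial\disk}$ enters. The substantive geometric input — ruling out closed level-set components and forcing injectivity along level sets — is entirely a consequence of the immersion condition and uses no holomorphicity.
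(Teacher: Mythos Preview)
Your proof is correct and follows essentially the same level-set strategy as the paper: both use the immersion hypothesis to see that $y\circ u$ is strictly monotone along each fibre of $x\circ u$. The only difference is packaging---you establish global injectivity directly by ruling out closed leaves and showing each level set is a single arc, whereas the paper first deduces that $u|_{\partial\disk}$ is an embedding and then appeals to the Jordan curve theorem to handle the interior; your route is arguably a bit cleaner and more self-contained.
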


\begin{proof}
Consider the foliation of $\disk$ given by $\ker(u^{\ast}dx)$ whose leaves are the preimages of line segments of the form $\im(u) \cap \{ x = x_{0}\}$. The leaves corresponding to the maximum and minimum $x$-values are points. Using our assumption that $u$ is an immersion, every other leaf is a compact $1$-dimensional manifold along which $u^{\ast}dy \neq 0$.

By rescaling the $x$-coordinate of the target and reparameterizing $\disk$, we may assume without loss of generality that $x \circ u|_{\partial \disk}$ agrees with the $x$-coordinate on $\partial\disk \subset \C$ having unique boundary critical points $\pm 1 \in \disk \subset \C$. By our hypothesis on $x \circ u$, the restriction of $u$ to each connected component of $\partial\disk \setminus \{ \pm 1 \}$ is an embedding. If the images of these line segments intersect, say at some $x_{0} \in (-1, 1)$, with $y$-value $y_{0}$, then the restriction of $y\circ u$ to the leaf $\{ x = x_{0}\}$ must have (possibly degenerate) critical points as $y\circ u = y_{0}$ at the endpoints of the leaf. At such a critical point, $u$ cannot be an immersion. 

We conclude that $u|_{\partial \disk}$ is an embedding. Hence there is a single connected component of $\C \setminus u(\partial \disk)$ of finite area which must be the image of $\Int(\disk)$ under $u$. Hence $u$ is an embedding, completing the proof.
\end{proof}

\begin{figure}[h]
	\begin{overpic}[scale=.5]{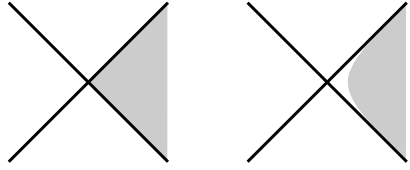}
	\end{overpic}
	\caption{Rounding convex corners. The rounding procedure can be applied by rotating the figure by any integer multiple of $\frac{\pi}{2}$.}
	\label{Fig:RoundCorner}
\end{figure}

\begin{prop}\label{Prop:HoloDiskEmbedding}
For $\lambda$ left-right-simple, each $\ind(u)=0$ holomorphic disk is an embedding.
\end{prop}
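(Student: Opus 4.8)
The plan is to reduce Proposition~\ref{Prop:HoloDiskEmbedding} to Lemma~\ref{Lemma:DiskEmbedding}: first round the boundary punctures of $u$ to produce an honest immersion of the \emph{closed} disk, and then show that the $x$-coordinate has exactly one local maximum and one local minimum along its boundary, using Theorem~\ref{Thm:CritIndex} to control the count.

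First I would extract the numerology. Since $\ind(u)=0$, Theorem~\ref{Thm:BranchIndex} shows $u$ is an immersion with every boundary puncture of order $0$ --- i.e.\ a convex corner. Applying Theorem~\ref{Thm:CritIndex} with $\Sigma=\disk$, so $\chi(\Sigma)=1$, gives
\[
0 \;=\; \ind(u) \;=\; -2 \;+\; \#\bigl(u|_{\partial\dot{\disk}}^{-1}(\Crit(x|_\lambda))\bigr) \;+\; \#(p_i^+),
\]
so that $\#\bigl(u|_{\partial\dot{\disk}}^{-1}(\Crit(x|_\lambda))\bigr)+\#(p_i^+)=2$; note also that $u$ has at least one positive puncture, since $\energy(u)>0$ by Equation~\eqref{Eq:ActionEnergy}.

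Next I would round all of the convex corners, exactly as in Figure~\ref{Fig:RoundCorner} and the opening step of the proof of Theorem~\ref{Thm:BranchIndex}, to obtain an immersion $\tilde u\colon\disk\to\C$ whose image agrees with that of $u$ outside arbitrarily small neighborhoods of the double points of $\lambda$ carrying the punctures of $u$. The crux is then to count the local extrema of $x\circ\tilde u|_{\partial\disk}$. Away from the rounded corners $\tilde u|_{\partial\disk}$ coincides with $u|_{\partial\dot{\disk}}$, which is an immersion into $\lambda$ (here one uses that $\ind(u)=0$ forbids boundary critical points), so on that part the critical points of the $x$-coordinate are precisely the points of $u|_{\partial\dot{\disk}}^{-1}(\Crit(x|_\lambda))$, each a nondegenerate extremum since $x|_\lambda$ is Morse, and of the type prescribed by Proposition~\ref{Prop:BoundaryCusps}. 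At a rounded corner the contribution depends on which of the four sectors at the double point the map occupies: by Propositions~\ref{Prop:Directions} and~\ref{Prop:BoundaryCusps}, a positive puncture occupies one of the two sectors whose corner point is a local extremum of $x$ restricted to that sector, so rounding it contributes exactly one local extremum of $x\circ\tilde u|_{\partial\disk}$; whereas a negative puncture occupies one of the two sectors along whose bounding arcs $x$ is monotone, so rounding it contributes none. Summing, $x\circ\tilde u|_{\partial\disk}$ has exactly $\#\bigl(u|_{\partial\dot{\disk}}^{-1}(\Crit(x|_\lambda))\bigr)+\#(p_i^+)=2$ local extrema on the circle $\partial\disk$, hence exactly one local maximum and one local minimum.

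Finally, Lemma~\ref{Lemma:DiskEmbedding} applies to $\tilde u$ and shows it is an embedding; since $u$ is recovered from $\tilde u$ by ``un-rounding'' the corners --- a modification supported near finitely many points and as small as we like --- it too is an embedding. The step I expect to be the main obstacle is the bookkeeping in the previous paragraph: pinning down, from the good-position and left-right-simple conventions (Propositions~\ref{Prop:Directions} and~\ref{Prop:BoundaryCusps}), that rounding a positive corner produces exactly one $x$-extremum while rounding a negative corner produces none, so that the extremum count matches the right-hand side of the index formula on the nose. The remaining points --- that rounding preserves immersedness and un-rounding preserves embeddedness --- are routine. As a byproduct one also recovers combinatorial finiteness of the rigid disks, since an embedded such disk is determined by its image, a union of regions of $\C\setminus\lambda$, of which there are only finitely many.
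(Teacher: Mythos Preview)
Your proposal is correct and follows essentially the same route as the paper: use Theorem~\ref{Thm:BranchIndex} to get an immersion with convex corners, round the corners to obtain $\widetilde{u}$, invoke Theorem~\ref{Thm:CritIndex} to deduce that $\#\bigl(u|_{\partial\dot\disk}^{-1}(\Crit(x|_\lambda))\bigr)+\#(p_i^+)=2$, identify these with the boundary extrema of $x\circ\widetilde{u}$, and conclude via Lemma~\ref{Lemma:DiskEmbedding}. The paper states the key identification (that boundary critical points of $x\circ\widetilde{u}$ are exactly the points over $\Crit(x|_\lambda)$ together with the positive punctures) without further justification, so your explicit bookkeeping via Propositions~\ref{Prop:Directions} and~\ref{Prop:BoundaryCusps} is a welcome elaboration rather than a deviation.
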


\begin{proof}
	By Theorem \ref{Thm:BranchIndex}, $u$ is an immersion and all of the boundary punctures of $u$ must correspond to convex corners. By trimming $\disk$ in neighborhoods about its boundary marked points as described in Figure \ref{Fig:RoundCorner}, we obtain a map $\widetilde{u}$ from a disk to $\C$ which is an immersion. It suffices to show that $\widetilde{u}$ is an embedding. The domain of $\widetilde{u}$ is strictly contained in $\disk$, but we will refer to it as $\disk$ for notational simplicity. The critical points of $x \circ \widetilde{u}|_{\partial \disk}$ correspond to the points in $\im(u)\cap \Crit(x|_{\lambda})$ together with the positive punctures of $u$.
	
	By Theorem \ref{Thm:CritIndex}, $u$ must be one of the following two forms:
	\be
	\item There are two positive punctures and $u$ is disjoint from $\Crit(x|_{\lambda})$.
	\item There is a single positive puncture and a single point of $\Crit(x|_{\lambda})$ touched by $u$.
	\ee
	In either case, we get exactly two critical points of $x \circ \widetilde{u}|_{\partial \disk}$. Hence by Lemma \ref{Lemma:DiskEmbedding}, $\widetilde{u}$ is an embedding.
\end{proof}

The embeddedness of $\ind(u) = 0$ disks facilitates an alternate proof of Corollary \ref{Cor:FiniteDisks} with an explicit (albeit, course) bound on the number of such disks.

\begin{proof}[Alternate proof of Corollary \ref{Cor:FiniteDisks}]
Suppose that $\C \setminus \lambda$ has $N \in \N$ connected components of finite area, indexed by $\{1, \dots, N\}$. The image of such a map $U$ will be uniquely determined by the components of $\C \setminus \lambda$ which are covered by $U$. For each connected component of $\C \setminus \lambda$ can be covered at most once and if two such covered connected components are adjacent, then our holomorphic map must extend over the arc lying at the intersection of their boundaries. These observations follow from Proposition \ref{Prop:HoloDiskEmbedding}.

Define a map
\begin{equation*}
\ModSpace^{\Lambda}_{1}/\R \rightarrow (\Z / 2\Z)^{N}
\end{equation*}
which assigns to $U$ a $1$ (or $0$) in the $j$th position of the target if the $j$th connected component of $\C \setminus \lambda$ is (or is not) covered by $\im(\pi_{\C}\circ U)$. The preceding arguments inform us that this map is injective. It then follows that $\#(\ModSpace^{\Lambda}_{1}/\R) \leq 2^{N}$, the cardinality of the target of this map.
\end{proof}

The following result will be useful later in the proof of Proposition \ref{Prop:NoLowIndexChiMinusOne}.

\begin{prop}\label{Prop:NoTouching}
Let $u \in \ModSpace^{\lambda}$ be an $\ind(u) = 0$ curve associated to a left-right-simple $\lambda$ whose domain $\dot{\Sigma}$ is a punctured disk with two positive punctures. Then the positive punctures can be labeled $p^{+}_{\min}, p^{+}_{\max}$ so that all negative punctures $p^{-}_{i}$ satisfy
\begin{equation*}
x(p^{+}_{\min}) < x(p^{-}_{i}) < x(p^{+}_{\max})
\end{equation*}
where the $x(p^{\ast}_{\ast})$ are the limiting $x$-values of the punctures under the map $u$.
\end{prop}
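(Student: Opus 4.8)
The plan is to read the statement off the maximum principle applied to the harmonic function $x\circ u=\operatorname{Re}(u)$ on $\dot{\Sigma}$. First I would feed $\Sigma=\disk$, $\ind(u)=0$ and $\#(p_{i}^{+})=2$ into Theorem \ref{Thm:CritIndex}; this forces $\#\big(u|_{\partial\dot{\Sigma}}^{-1}(\Crit(x|_{\lambda}))\big)=0$, so $u$ sends $\partial\dot{\Sigma}$ into $\lambda\setminus\Crit(x|_{\lambda})$, and by Theorem \ref{Thm:BranchIndex} the map $u$ is an immersion whose boundary punctures all have order $0$, i.e.\ correspond to convex corners. Consequently, along each connected component $\eta$ of $\partial\dot{\Sigma}$ the derivative of $x\circ u$ can vanish only where $u(\eta)$ meets a point of vertical tangency of $\lambda$, which is excluded; hence $x\circ u|_{\eta}$ is strictly monotone and has no interior critical point.

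Second, $x\circ u$ is harmonic on $\dot{\Sigma}$ and extends continuously over the compactified disk $\disk$ (boundary punctures limit to double points of $\lambda$), so by the maximum principle — $u$ being non-constant, as it is an immersion — the maximum and minimum of $x\circ u$ are attained on $\partial\disk$, and by the previous paragraph they are attained at boundary punctures rather than in the interiors of the boundary arcs.

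Third, I would analyse the local model of $\im(u)$ at a boundary puncture $p$ mapping to a double point $(x_{0},y_{0})$ of $\lambda$. Good position makes $\lambda$ locally a pair of lines of slope $\pm1$ through $(x_{0},y_{0})$, cutting a neighbourhood into four quadrants, and since $p$ has order $0$, $\im(u)$ occupies exactly one of them near $p$. One checks, using Proposition \ref{Prop:Directions} and the good-position conventions, that at a positive puncture $\im(u)$ occupies the quadrant lying strictly left of, or strictly right of, $(x_{0},y_{0})$ — so $x\circ u$ takes values on only one side of $x_{0}=x(p)$ near $p$ — whereas at a negative puncture $\im(u)$ occupies the quadrant lying above or below $(x_{0},y_{0})$, where $x\circ u$ takes values both greater than and less than $x_{0}$. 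Therefore $x\circ u$ has no local extremum over $\disk$ at a negative puncture, so its global maximum and minimum are realised at positive punctures; as there are exactly two of those and the two extreme values differ, I label $p_{\max}^{+}$ the positive puncture realising $\max_{\disk}(x\circ u)$ and $p_{\min}^{+}$ the remaining one, which must then realise $\min_{\disk}(x\circ u)$. For each negative puncture $p_{i}^{-}$ we get $x(p_{\min}^{+})\le x(p_{i}^{-})\le x(p_{\max}^{+})$ at once, and either equality would make $p_{i}^{-}$ a global extremum of $x\circ u$, contradicting the previous sentence; hence both inequalities are strict.

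The step I expect to be the main obstacle is the third one: pinning down the local quadrant occupied by $\im(u)$ at positive versus negative punctures — in particular fixing the orientation so that positive punctures, not negative ones, are the $x$-extremal corners. This is a matter of combining the boundary-direction dictionary of Proposition \ref{Prop:Directions} with the good-position normal form at a double point and the asymptotic behaviour of a lift $U=(s,t,u)$ near its punctures; everything else is a routine application of the maximum principle.
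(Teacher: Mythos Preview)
Your argument is correct, and the local analysis you flag as the obstacle does go through: with the good-position conventions, a convex corner at a positive puncture occupies the East or West quadrant (so $x\circ u$ lies strictly on one side of $x(p)$ nearby), while a convex corner at a negative puncture occupies the North or South quadrant (so $x\circ u$ takes values on both sides). Combined with the maximum principle and the monotonicity of $x\circ u$ along each open boundary arc, this forces the global extrema of $x\circ u$ to sit at the two positive punctures, and the strict inequalities follow.

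The paper's proof reaches the same conclusion by a purely boundary-combinatorial argument: starting at the right-pointing positive puncture, it traces $\partial\dot\Sigma$ and observes that $x$ is strictly monotone on each boundary arc and that passing through a negative (North/South) corner does not flip the sign of $\dot x$, so $x$ decreases monotonically all the way from $p^{+}_{\max}$ to $p^{+}_{\min}$ and increases on the return. Your route trades this explicit boundary traversal for the maximum principle on the whole disk. The paper's version is slightly more elementary (it never leaves $\partial\disk$), while yours is cleaner conceptually and makes the role of harmonicity of $\operatorname{Re}(u)$ explicit; both rest on the same preliminary inputs (Theorems~\ref{Thm:BranchIndex} and~\ref{Thm:CritIndex}) and the same local corner dichotomy.
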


\begin{proof}
As all of the corners of $u$ must be convex, a positive puncture must point to the right or to the left. Let's say that that one of the corners corresponding to $p^{+}_{\max}$ points to the right. 

As we traverse $\partial\dot{\Sigma}$ leaving this puncture, we must exit the associated self-intersection in the NW direction with $x$ decreasing. Following the top row of Figure \ref{Fig:CappingAwayFromMinMax} we see that any negative punctures encountered on our way to $p^{+}_{\min}$ must have $x$-value strictly less than that of $p^{+}_{\max}$ and strictly greater than that of $p^{+}_{\min}$, which must be a left-pointing corner. 

The same goes for any negative punctures encountered on the way from $p^{+}_{\min}$ back to $p^{+}_{\max}$ while continuing to traverse the boundary of the punctured disk. The same line of argument applies if we had first assumed that our starting puncture corresponded to a left-pointing corner, up to a change in notation.
\end{proof}

\subsection{Elimination of $\chi = 0, -1$ curves with low index}\label{Sec:ChiOneElimination}

Here we analyze $\chi(\Sigma) = 0, -1$ curves.

\begin{prop}\label{Prop:NoAnnuli}
Suppose that $\Sigma$ is an annulus. Then any $u \in \ModSpace^{\lambda}$ with $\lambda$ left-right-simple has $\ind(u) \geq 2$. Therefore $U \in \ModSpace^{\Lambda}$ with $\Lambda$ left-right-simple has $\ind(U) \geq 2$.
\end{prop}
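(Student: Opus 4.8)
The plan is to feed the index formula of Theorem~\ref{Thm:CritIndex} into a maximum-principle analysis of the harmonic function $x\circ u$ on the annulus. Granting the statement for $u$, the statement for $U$ is immediate from $\ind(U)=\ind(u)+\chi(\Sigma)=\ind(u)$.

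For an annulus $\chi(\Sigma)=0$, so Theorem~\ref{Thm:CritIndex} reads
\begin{equation*}
\ind(u)=\#\!\left(u|_{\partial\dot{\Sigma}}^{-1}(\Crit(x|_{\lambda}))\right)+\#(p_{i}^{+})\ \ge\ \#(p_{i}^{+})\ \ge\ 1,
\end{equation*}
the last step because a holomorphic curve has at least one positive boundary puncture. So I only need to exclude $\ind(u)=1$, and since $\#(p_{i}^{+})\ge 2$ already gives $\ind(u)\ge 2$, the single remaining case is: $u$ has exactly one positive puncture and $u(\partial\dot{\Sigma})$ is disjoint from $\Crit(x|_{\lambda})$. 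Suppose such a $u$ exists and put $f=x\circ u=\operatorname{Re}u$.

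Then $f$ is harmonic on $\Int(\Sigma)$. Every boundary puncture is asymptotic to a self-intersection of $\lambda$, whose $x$-coordinate lies strictly between the common value $x_{L}$ of the minima and the common value $x_{R}$ of the maxima of $x|_{\lambda}$; combined with the hypothesis $u(\partial\dot{\Sigma})\cap\Crit(x|_{\lambda})=\emptyset$ this shows $f$ extends continuously to the closed annulus $\Sigma$ with all values in $(x_{L},x_{R})$. If $f$ were constant, $u$ would map into a vertical line and hence be constant, which is excluded; so $f$ is non-constant, attains a maximum and minimum on $\Sigma$, and by the strong maximum principle these are attained on $\partial\Sigma$, where they are in particular local extrema of $f$ over $\Sigma$. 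Next I classify which points of $\partial\Sigma$ can host such an extremum. At a point that is neither a puncture nor a branch point of $u$, the map $u$ is an immersion onto a regular point of $x|_{\lambda}$, so $f|_{\partial\Sigma}$ has nonzero derivative there. At a boundary branch point $z_{0}$, writing $u(z)=u(z_{0})+b(z-z_{0})^{m}+\cdots$ with $m\ge 2$ in a boundary chart, $f(z)-f(z_{0})=\operatorname{Re}(b(z-z_{0})^{m})+\cdots$ changes sign in every interior half-disk about $z_{0}$, so $z_{0}$ is saddle-type, not a local extremum of $f$ over $\Sigma$. A non-convex (order-two) boundary puncture covers three adjacent quadrants at its crossing, which always contain points with $x$-coordinate both above and below that of the crossing, so $f$ passes monotonically through it. Hence the only candidates are convex punctures at which the disk locally occupies the quadrant lying entirely to the left of the crossing (the only possible local maxima) or entirely to the right (the only possible local minima).

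Consequently $f$ must attain its maximum at a ``left-pointing'' convex puncture and its minimum at a ``right-pointing'' convex puncture. The crucial input -- and the step I expect to be the main obstacle -- is to read off from the over/under bookkeeping of Section~\ref{Sec:BoundaryPaths} (Propositions~\ref{Prop:Directions} and \ref{Prop:BoundaryCusps}) the fixed sign attached to each of these two corner types, and to conclude from it, together with the bound from Theorem~\ref{Thm:BranchIndex} on the number of boundary branch points an $\ind=1$ curve may carry, that the two boundary circles of the annulus cannot simultaneously furnish both extremal corners: the unique permitted positive puncture can supply at most one of the two extrema, and supplying the other would force a turn-around of $\partial\dot{\Sigma}$ around an $x$-critical value -- i.e. a $\Crit(x|_{\lambda})$-preimage or a boundary branch point in excess of the index budget -- contradicting the reduction in either case. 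This is exactly where left-right-simplicity enters: without left-simplicity a boundary arc may turn around a local minimum of $x|_{\lambda}$ of interior $x$-value, which is precisely the mechanism behind the $\ind(U)=1$ annulus of Section~\ref{Sec:RigidUAnnulus}.
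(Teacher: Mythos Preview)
Your approach via the maximum principle for $x\circ u$ is genuinely different from the paper's and, once completed, is arguably cleaner. The paper instead normalises the hypothetical $\ind=1$ annulus so that it carries a single order-$1$ boundary branch point (Figure~\ref{Fig:NonConvexSplitting}), then slides that branch point along an arc $a\subset\dot\Sigma$ with $u(a)\subset\lambda$ until the domain is cut open (Figure~\ref{Fig:BoundarySplitting}); the resulting piece is an $\ind=0$ disk which visibly fails to be embedded, contradicting Proposition~\ref{Prop:HoloDiskEmbedding}. Your route avoids this cut-and-paste entirely.

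That said, you stop exactly where the argument becomes a proof. The step you flag as ``the main obstacle'' is in fact the whole content, and it is a one-line consequence of the overcrossing convention (Section~\ref{Sec:OvercrossingConvention}) rather than an obstacle: at a convex corner the boundary passes from the under-strand to the over-strand at a positive puncture and from over to under at a negative one, so a convex corner occupying the W or E quadrant is \emph{always} positive and one occupying the N or S quadrant is always negative. (This is precisely what is used, without comment, in the first sentence of the proof of Proposition~\ref{Prop:NoTouching}.) With this in hand your classification of boundary extrema already finishes the argument: the global maximum and the global minimum of $f$ are distinct local extrema on $\partial\Sigma$, each must lie at a positive convex puncture, and there is only one positive puncture. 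No appeal to Theorem~\ref{Thm:BranchIndex}, to ``turn-arounds'', or to the distribution of extremal corners between the two boundary circles is needed; your final paragraph replaces a one-line conclusion with an outline of unnecessary further work. As written the proposal identifies a correct and elegant strategy but leaves its decisive step unexecuted.
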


\begin{proof}
We will rule out the existence of $\ind(u) = 1$ annuli by contradiction. By Theorem \ref{Thm:CritIndex} such a curve could have only a single positive boundary puncture and not touch any point in $\Crit(x|_{\lambda})$. By Theorem \ref{Thm:BranchIndex}, such a curve can have either:
\be
\item a single non-convex corner for one of its punctures, all other punctures corresponding to convex corners, and no critical points or
\item all punctures corresponding to convex corners with a single boundary critical point and no interior critical points.
\ee

\begin{figure}[h]
\begin{overpic}[scale=.5]{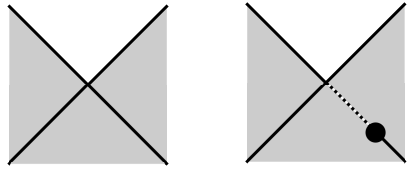}
\end{overpic}
\caption{Perturbing a non-convex corner to obtain a convex corner and a boundary critical point.}
\label{Fig:NonConvexSplitting}
\end{figure}

In the first case above, we may perturb $u$ near a non-convex corner to find a nearby holomorphic map which coincides with the second case described. This is described by Figure \ref{Fig:NonConvexSplitting} and its $\frac{\pi}{2}$ rotations. The right-most subfigure may also be reflected about a vertical line. After applying such a perturbation, it suffices to consider only the second case listed above.

\begin{figure}[h]
\begin{overpic}[scale=.5]{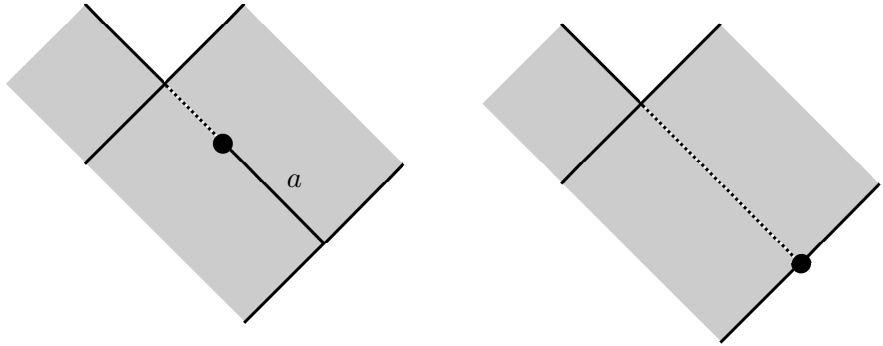}
\put(32, 18){$a$}
\end{overpic}
\caption{We push a boundary critical point to $\partial \Sigma$ along an arc $a$.}
\label{Fig:BoundarySplitting}
\end{figure}

Thus we may assume that we have a single boundary critical point away from which our map $u$ is an immersion. Following Figure \ref{Fig:BoundarySplitting}, we may push this boundary critical point along some arc $a \subset \Sigma$ for which
\be
\item $u(a) \subset \lambda$,
\item one endpoint of $a$ is the boundary branch point and the other lies on $\partial \dot{\Sigma}$.
\ee
In the limit we obtain a nodal curve whose domain can be described by compactifying $\dot{\Sigma} \setminus a$ and identifying two points in the boundary of the curve obtained. 

If $a$ separates $\Sigma$, then $\dot{\Sigma} \setminus a$ consists of an annulus and a disk component. This is impossible, as otherwise $u$ would determine a holomorphic map from the annulus to $\C$ which is immersed and hence have $\ind = 0$, a scenario ruled out by Theorem \ref{Thm:CritIndex}.

As $u$ is non-separating, we may compactify $\dot{\Sigma} \setminus a$ to obtain a disk with boundary punctures and a holomorphic map $\widetilde{u}$ from this disk to $\C$. The map $\widetilde{u}$ is immersed with all corners convex and cannot touch any critical points of $x|_{\lambda}$. Hence the disk has exactly $2$ boundary punctures, $p^{+}_{1}, p^{+}_{2}$ by the above theorem. As is clear from the cutting operation of Figure \ref{Fig:BoundarySplitting} -- specifically focusing on the dashed arc -- the map $\widetilde{u}$ cannot be an embedding near its boundary. This contradicts Proposition \ref{Prop:HoloDiskEmbedding}.
\end{proof}

Now we seek to show that all curves which are neither disks nor annuli have $\ind(U) = \ind(u) + \chi(\Sigma) \geq 3$ as per item (2) of Theorem \ref{Thm:Main}. According to Theorem \ref{Thm:CritIndex}, we need only consider the case $\chi(\Sigma) = -1$, so that $\Sigma$ is either a pair of pants or a genus $1$ curve with a single boundary component.

\begin{prop}\label{Prop:NoLowIndexChiMinusOne}
Suppose $\chi(\Sigma) = -1$. If $\lambda$ is left-right-simple, then $\ind(u) \geq 4$. Consequently if $\Lambda$ is left-right-simple, then $\ind(U) \geq 3$.
\end{prop}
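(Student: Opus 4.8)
The plan is to deduce the bound from Theorem~\ref{Thm:CritIndex} and the cutting technique used to prove Proposition~\ref{Prop:NoAnnuli}. For $\chi(\Sigma)=-1$ the index formula reads $\ind(u)=2+\#(u|_{\partial\dot{\Sigma}}^{-1}(\Crit(x|_{\lambda})))+\#(p_{i}^{+})$, and since a holomorphic curve always carries a positive boundary puncture we already get $\ind(u)\geq 3$; it therefore suffices to exclude $\ind(u)=3$, after which $\ind(U)=\ind(u)+\chi(\Sigma)=\ind(u)-1\geq 3$ by Equation~\eqref{Eq:IndProjComparison}. So assume $\ind(u)=3$. The formula then forces $\#(p_{i}^{+})=1$ and $u|_{\partial\dot{\Sigma}}$ to avoid $\Crit(x|_{\lambda})$, while $\Sigma$ is a pair of pants or a genus-$1$ surface with one boundary component, and Theorem~\ref{Thm:BranchIndex} says the total branch weight equals $3$.

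I would first normalize the branching as in the proof of Proposition~\ref{Prop:NoAnnuli}: perturbing near a non-convex corner (Figure~\ref{Fig:NonConvexSplitting}) trades it for a convex corner together with a boundary branch point, and a further generic perturbation makes all branch points simple; none of this changes $\ind(u)$, $\#(p_{i}^{+})$, or the avoidance of $\Crit(x|_{\lambda})$. After this, all corners are convex and total branch weight $3$ leaves exactly two cases: (a) three simple boundary branch points and no interior branch point; (b) one simple boundary branch point and one simple interior branch point.

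In case (a) I would iterate the degeneration of Figure~\ref{Fig:BoundarySplitting}: push a boundary branch point towards a double point of $\lambda$ along an arc $a$ with $u(a)\subset\lambda$ chosen to miss $\Crit(x|_{\lambda})$, pass to the nodal limit, and pass to the normalization; each such cut removes one boundary branch point, inserts only order-$0$ marked points, and raises $\chi$ by $1$. After using all three branch points this way, $\Sigma$ has been replaced by a pair of disks (the Euler characteristics of the pieces sum to $\chi(\Sigma)+3=2$), and their indices sum to $\ind(u)-3=0$; hence each piece is rigid, so embedded by Proposition~\ref{Prop:HoloDiskEmbedding}, and each has exactly two positive punctures by Theorem~\ref{Thm:CritIndex}. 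Reversing the construction, $u$ is recovered by regluing these two embedded disks along the three arcs $u(a_{j})$, each running through a double point of $\lambda$; applying the interleaving $x(p^{+}_{\min})<x(p^{-}_{i})<x(p^{+}_{\max})$ of Proposition~\ref{Prop:NoTouching} inside each disk, together with the fact that glued boundary punctures map to the same double point and hence share an $x$-value, I expect to produce a cyclic chain of strict inequalities among the limiting $x$-values that cannot close up. That is the contradiction.

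Case (b) is the step I expect to be the genuine obstacle, because the interior branch point need not lie over $\lambda$ and so cannot be pushed off along an arc inside the diagram. Here I would first use the boundary branch point as above to reduce to an annulus carrying a single interior simple branch point, with $\ind=2$ and (by Theorem~\ref{Thm:CritIndex}) two positive punctures, and then rule out such an annulus by a dedicated study of interior branching against a left-right-simple diagram: either the branch point can be driven to the boundary, where a keyhole cut of the type in Figure~\ref{Fig:PizzaInterior} --- arranged so that its new boundary arcs stay inside $\lambda$ --- returns one to the already-excluded configurations, or some boundary circle collapses to an interior puncture and bubbles off a disk, again in conflict with the bounds of Theorems~\ref{Thm:CritIndex} and~\ref{Thm:BranchIndex}. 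Making this interior-branch-point analysis precise --- it is essentially the $\chi=0$ analogue of the argument Section~\ref{Sec:AnnuliCriticalPoints} gives for $\ind=2$ annuli --- and pinning down the puncture bookkeeping of case (a), is where the real work lies.
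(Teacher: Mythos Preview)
Your outline follows the paper's strategy closely: normalize to convex corners, reduce to boundary branch points, cut along arcs landing in $\lambda$, and finish with Proposition~\ref{Prop:NoTouching}. The endgame you sketch for case~(a) --- two rigid embedded disks with four positive punctures, three of which are paired with negative punctures at the same double point, yielding an $x$-value contradiction --- is exactly the paper's conclusion.

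There are two places where your sketch diverges from, or underspecifies, the paper's argument. First, in case~(a) you assert that iterating the Figure~\ref{Fig:BoundarySplitting} degeneration three times produces a pair of disks. This is not automatic: at each stage the arc $a$ might separate the domain (producing, say, a disk and a $\chi=-1$ piece, or two annuli), or might terminate at another boundary branch point rather than at an immersed boundary point (Figure~\ref{Fig:BoundaryDoubleDegen}). The paper handles each of these possibilities at each of the three stages by appealing to Theorem~\ref{Thm:CritIndex}, Theorem~\ref{Thm:BranchIndex}, and Proposition~\ref{Prop:NoAnnuli} to rule out the unwanted topologies; only after this elimination does one know the first two cuts are non-separating and the third is separating, yielding precisely two disks. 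You acknowledge this bookkeeping is the ``real work,'' and that is accurate --- it is the bulk of the paper's proof.

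Second, your case~(b) takes a different route from the paper. The paper pushes the interior branch point to the boundary first (Figures~\ref{Fig:BoundaryNodalDegen} and~\ref{Fig:BoundaryHalfDegen}), converting it to boundary branch points and reducing everything to case~(a). Your proposal instead cuts with the boundary branch point first to obtain an $\ind=2$ annulus with a simple interior branch point, and then invokes Proposition~\ref{Prop:AnnuliInteriorCriticalPoints}. This is logically sound --- that proposition does not depend on the present one --- but it is a forward reference in the paper's ordering, and you still need to check (as in case~(a)) that the initial cut is non-separating; this follows easily since a separating cut would leave a $\chi=-1$ piece with $\ind=2$ by Theorem~\ref{Thm:BranchIndex}, contradicting Theorem~\ref{Thm:CritIndex}. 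The paper's ordering is more self-contained, but your variant is a legitimate shortcut once Proposition~\ref{Prop:AnnuliInteriorCriticalPoints} is in hand.
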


\begin{proof}
We suppose that we have a map $u$ with $\ind(u) = 3$ and $\chi(\Sigma) = -1$, seeking to find a contradiction. Applying Theorem \ref{Thm:CritIndex}, we see that this is the minimum possible index and that our curve has a single positive puncture and cannot touch $\Crit(x|_{\lambda})$.

As in Figure \ref{Fig:BoundarySplitting}, we can perform a perturbation to make all punctures have branch order zero, possibly at the expense of introducing additional boundary critical points. If the given boundary puncture $p$ has $\ord_{u}(p) > 1$, then the perturbation will result in a single boundary critical point of order $1$, together with an additional boundary critical point of order $\ord_{u}(p)-1$. We will see how to deform boundary critical points of order two -- the worst possible case in our current setup -- into a pair of boundary critical points, each of order $1$ in the discussion around Figure \ref{Fig:BoundaryHalfDegen}, below.

Then we may assume that all punctures correspond to convex corners. Moreover, Theorem \ref{Thm:BranchIndex} tells us that out map has either
\be
\item a single interior critical point and a single boundary critical point of order $1$,
\item no interior critical points and boundary critical points whose orders sums up to $3$.
\ee

We first address the possibility that there exists an interior critical point. Suppose that an interior critical point exists and take a path in the moduli space which moves the critical point towards the boundary of $\partial \dot{\Sigma}$. In the limit there are two possible scenarios.

\begin{figure}[h]
\begin{overpic}[scale=.6]{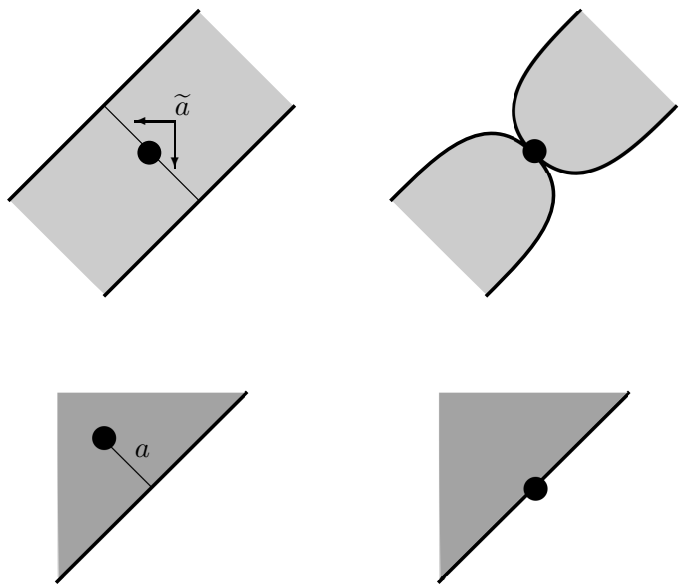}
\put(19, 19){$a$}
\put(25, 70){$\widetilde{a}$}
\put(25, 69){\vector(-1, 0){6}}
\put(25, 69){\vector(0, -1){7}}
\end{overpic}
\caption{Pushing an interior critical point to $\partial \Sigma$. Domains of curves appear in the top row with their images in the bottom row, a convention we will adopt for subsequent figures. The heavier shading indicates double-covering.}
\label{Fig:BoundaryNodalDegen}
\end{figure}

First, if the image of the critical point tends towards the boundary of the image of our holomorphic map, then a node will develop in the domain as shown in Figure \ref{Fig:BoundaryNodalDegen}. For arcs $a$ in this image which connect the image of the critical point to the boundary of the image will lift to double covers $\widetilde{a}$ as shown in the figure. In the limit, these arcs $\widetilde{a}$ may shrink to points where the nodes of interest will develop. Let's write $\widetilde{\Sigma}$ for the limiting curve obtained by removing the node and write $\widetilde{u}:\widetilde{\Sigma} \rightarrow \C$ for the associated holomorphic map. By our hypothesis on the map $u$, $\widetilde{u}$ is an immersion. If $\widetilde{\Sigma}$ has two connected components, then one of the components must have $\chi \leq 0$, and the index of the restriction of $\widetilde{u}$ to this surface is zero. This is impossible by Theorem \ref{Thm:BranchIndex}. If $\widetilde{\Sigma}$ is connected then it is an annulus and $\ind(\widetilde{u}) = 0$, which is again impossible.

\begin{figure}[h]
\begin{overpic}[scale=.5]{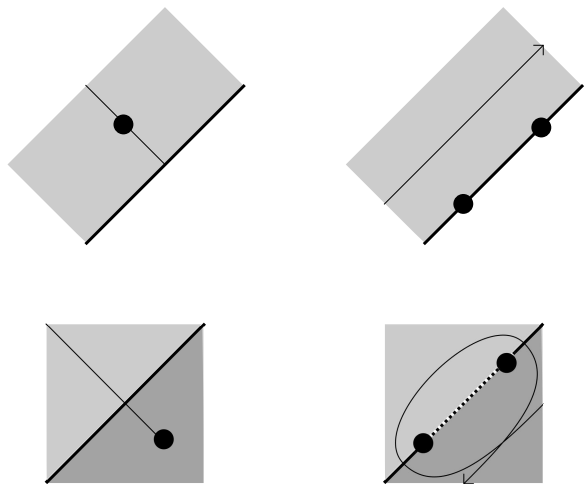}
\end{overpic}
\caption{In the left column we have a local picture of a map $u$ with a pair of $\ord_{u} = 1$ boundary critical points. To help intuit the branching behavior, we observe a thin arc in the domain being sent to a loop enclosing the images of the corresponding critical values via $u$. In the center column, the boundary critical points converge to a single $\ord_{u} = 2$ boundary critical point. In the right column, the single critical point moves from the boundary of the domain into its interior. Again we track the image of an arc under the map to help see the branching.}
\label{Fig:BoundaryHalfDegen}
\end{figure}

Second, the image of the critical point may stay within the interior of the image of the limiting map. In this case, the domain $\Sigma$ is topologically the same in the limit and an $\ord_{u} = 2$ critical point develops on the boundary of $\dot{\Sigma}$. The limit curve may also be realized as the limit of a sequence of curves having two $\ord_{u} = 1$ boundary critical points fuse as described in Figure \ref{Fig:BoundaryHalfDegen}.

We describe a simple model for this type of degeneration while explaining what is happening in the figure. Consider functions of the form
\begin{equation*}
u_{\epsilon} = \frac{z^{3}}{3} - \epsilon^{2}z,\quad du_{\epsilon} = (z + \epsilon)(z - \epsilon)
\end{equation*}
with domain the upper half plane $\Sigma = \{ y \geq 0 \}$. For $\epsilon \in \R \cup i\R$, the $u_{\epsilon}$ map $\partial \Sigma = \R$ to $\R$. For $\epsilon \in \R \setminus \{0\}$ there are two critical points in the boundary of the domain as shown in the left-hand side of Figure \ref{Fig:BoundaryHalfDegen}. For $\epsilon = 0$, these two boundary critical points converge as shown in the center column of the figure. Likewise taking the limit $\epsilon \rightarrow 0$ for $\epsilon \in i\R$ will push a single interior critical point towards $\partial \Sigma$. We see this as we transition from the right to center columns of the figure.

By pushing interior critical points to the boundary of $\dot{\Sigma}$, we can assume that $u$ has only convex corners, no interior critical points, and exactly three boundary critical points, each of order $1$. For the above argument can also be used to convert $\ord_{u} \geq 1$ boundary critical points into a collection of $\ord_{u} = 1$ boundary critical points.

\begin{figure}[h]
\begin{overpic}[scale=.5]{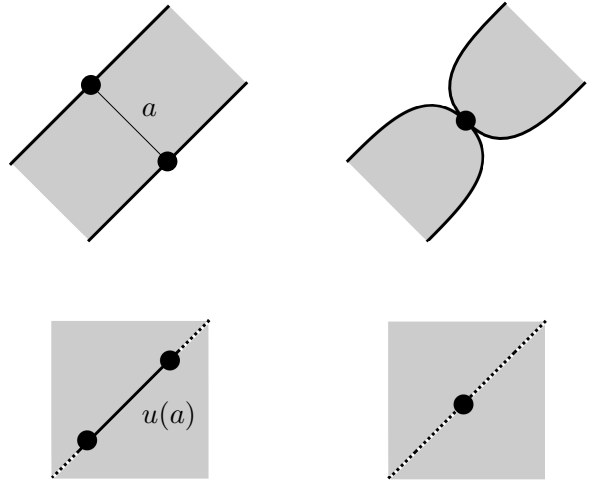}
\put(23, 63){$a$}
\put(23, 10){$u(a)$}
\end{overpic}
\caption{Nodal degeneration occurring upon the fusion of two boundary critical points. The arc $a$ connects boundary critical points in the domain of $u$.}
\label{Fig:BoundaryDoubleDegen}
\end{figure}

As in Figure \ref{Fig:BoundarySplitting}, we attempt to push one of the boundary critical points towards the end of a compact, connected arc $a \subset \dot{\Sigma}$ for which $u(a) \subset \lambda$. This time, because our curve has multiple boundary critical points, we may run into another boundary critical point as shown in Figure \ref{Fig:BoundaryDoubleDegen}. Then the limiting curve is a nodal domain which is obtained by shrinking the arc $a$ to a point. Let $\widetilde{\Sigma}$ be the associated curve with the node removed determining a holomorphic map $\widetilde{u}: \widetilde{\Sigma} \rightarrow \C$. By our hypothesis, the map $\widetilde{u}$ must have a single positive puncture and a single boundary critical point. Because there is only one positive puncture, $\widetilde{\Sigma}$ must be connected, and so $\widetilde{\Sigma}$ must be an annulus with a single boundary critical point, which is impossible by Proposition \ref{Prop:NoAnnuli}.

Therefore, our arc $a$ touches $\partial \dot{\Sigma}$ at both of its endpoints, with one endpoint on a boundary critical point and the other on a point of $\partial \dot{\Sigma}$ at which $u$ is an immersion. The arc $a$ is therefore described as in Figure \ref{Fig:BoundarySplitting}. If the arc $a$ separates $\dot{\Sigma}$, then either
\be
\item $\widetilde{\Sigma}$ is a disjoint union of a disk and an $\chi = -1$ surface. If this is so, the $\chi = -1$ surface will have at most two boundary critical points, no interior critical points, and so by the branch order index formula will have $\ind = 2$, which violates the estimate $\ind(u) \geq 3$ provided by Theorem \ref{Thm:CritIndex}.
\item $\widetilde{\Sigma}$ is a disjoint union of two annuli. In this case, either each annuli will have a single boundary critical point (which is impossible as the index of an annulus is at least $2$) or one annulus will have two boundary critical points and the other will have none (which is again impossible due to the branch order index formula).
\ee

Thus the arc $a$ cannot be separating so that $\dot{\Sigma} \setminus a$ is a connected annulus determining a holomorphic map with exactly two boundary critical points, two positive punctures, all punctures corresponding to convex corners, and no points touching $\Crit(x|_{\lambda})$. Moreover, there must be a pair $p_{1}^{+}, p_{1}^{-}$ consisting of a positive and negative puncture which are mapped to the same self-intersection of $\lambda$. Let's denote this map $u_{1}$ and rename the arc $a$ as $a_{1}$.

Let's try to cut the domain of $u_{1}$ with another arc $a_{2}$ for which $u_{1}(a_{2})\subset \lambda$ with an endpoint of $a_{2}$ touching a boundary critical point of $u_{1}$. We may assume that such an arc cannot fuse two boundary critical points as in Figure \ref{Fig:BoundaryDoubleDegen}, as we can consider such an arc as being contained in $\dot{\Sigma}$, a possibility ruled about by the above arguments. Thus the arc $a_{2}$ must extend to the boundary as in Figure \ref{Fig:BoundarySplitting}. If the arc $a_{2}$ is separating, then one of the connected components of $\dot{\Sigma} \setminus (a_{1} \cup a_{2})$ will be an annulus with less than two boundary critical points -- an impossibility by Proposition \ref{Prop:NoAnnuli}. Therefore $a_{2}$ must not separate.

Then $\Sigma_{2}' = \dot{\Sigma} \setminus (a_{1}\cup a_{2})$ is a disk and we have a map $u_{2}:\dot{\Sigma}_{2}\rightarrow \C$ for which
\be
\item all corners are convex,
\item the image of $u_{2}$ cannot touch $\Crit(x|_{\lambda})$,
\item there is a single boundary critical point of order $1$,
\item there are exactly three positive puncture $p^{+}_{1},p^{+}_{2},p^{+}_{3}$, and
\item there are negative punctures $p^{-}_{1},p^{-}_{2}$ for which $p^{+}_{i} = p^{-}_{i}$ for $i=1, 2$.
\ee
We can cut $\dot{\Sigma}_{2}$ using another arc $a_{3}$ which connects the remaining boundary critical point to $\partial \dot{\Sigma}_{2}$ and necessarily separates. We thus obtain $\Sigma_{3}' = \dot{\Sigma} \setminus (a_{1}\cup a_{2} \cup a_{3})$ -- a disjoint union of two disks -- and a holomorphic map $u_{3}:\dot{\Sigma}_{3}\rightarrow \C$ for which
\be
\item all corners are convex,
\item the image of $u_{3}$ cannot touch $\Crit(x|_{\lambda})$,
\item $u_{3}$ is an immersion,
\item there are exactly four positive puncture $p^{+}_{1},\dots, p^{+}_{4}$, and
\item there are negative punctures $p^{-}_{1},p^{-}_{2}, p^{-}_{3}$ for which $u_{3}(p^{+}_{i}) = u_{3}(p^{-}_{i})$ for $i=1, 2, 3$.
\ee
Since $u_{3}$ is an immersion not touching $\Crit(x|_{\lambda})$, each connected component of $\dot{\Sigma}_{3}$ has exactly two positive punctures by Theorem \ref{Thm:CritIndex}. By Proposition \ref{Prop:NoTouching}, for each pair $(p^{+}_{i}, p^{-}_{-})$ of punctures mapping to the same self-intersection of $\lambda$ we must have that $p^{+}_{i}$ and $p^{-}_{-}$ lie on distinct connected components of $\Sigma_{3}'$. Let's say that $p^{+}_{1}$ and $p^{+}_{2}$ live on the same connected component, possibly after a reindexing of the boundary punctures. Then again by Proposition \ref{Prop:NoTouching}, we must have that either the $x$-value of $p^{+}_{3}$ is less than all of the $x$-values of all negative punctures, or it must have $x$-value greater than that of all other punctures. 

This means that the pair $(p^{+}_{3}, p^{-}_{3})$ cannot possibly exist. Thus our $\ind(u) = 3$, $\chi(\Sigma) = -1$ curve cannot exist and the proof is complete.
\end{proof}

\subsection{Critical points of $\ind=2$ holomorphic annuli}\label{Sec:AnnuliCriticalPoints}

\begin{prop}\label{Prop:AnnuliInteriorCriticalPoints}
Suppose that $u \in \ModSpace^{\lambda}$ is an $\ind(u) = 2$ annulus with $\lambda$ left-right-simple. Then $u$ cannot have any interior critical points. Therefore an $\ind(U) = 2$ annulus cannot have interior critical points when $\Lambda$ is left-right-simple.
\end{prop}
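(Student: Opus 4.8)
The plan is to argue by contradiction along the lines of Propositions \ref{Prop:NoAnnuli} and \ref{Prop:NoLowIndexChiMinusOne}. Suppose $u \in \ModSpace^{\lambda}$ is an $\ind(u) = 2$ annulus with $\Crit^{\Int}_{u} \neq \emptyset$. Since $\chi(\Sigma) = 0$, Theorem \ref{Thm:BranchIndex} reads $2 = \frac{1}{2}\sum_{p_{i}}\ord_{u}(p_{i}) + \sum_{z \in \Crit^{\partial}_{u}}\ord_{u}(z) + 2\sum_{z \in \Crit^{\Int}_{u}}\ord_{u}(z)$, and the last term is at least $2$; hence $u$ has exactly one interior critical point $\zeta$, of order $1$, no boundary critical points, and only convex corners. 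Theorem \ref{Thm:CritIndex} then forces $u$ to have either two positive punctures with $u|_{\partial\dot{\Sigma}}$ disjoint from $\Crit(x|_{\lambda})$, or one positive puncture together with one point of $u|_{\partial\dot{\Sigma}}^{-1}(\Crit(x|_{\lambda}))$.

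I would then push $\zeta$ toward $\partial\dot{\Sigma}$ inside $\ModSpace^{\lambda}_{u}$, exactly as in the proof of Proposition \ref{Prop:NoLowIndexChiMinusOne}, distinguishing the two possible degenerations. If the critical value of $\zeta$ tends to the boundary of $\im(u)$, a boundary node develops (Figure \ref{Fig:BoundaryNodalDegen}); removing it yields an immersed holomorphic curve $\widetilde{u}\colon\widetilde{\Sigma}\to\C$ with only convex corners (as in the proof of Proposition \ref{Prop:NoLowIndexChiMinusOne}) and $\chi(\widetilde{\Sigma}) = 1$, so $\widetilde{\Sigma}$ is either a disk or a disjoint union of a disk and an annulus. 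In the latter case the annular component carries an immersed map with convex corners, hence of index $0$ by Theorem \ref{Thm:BranchIndex}, which is impossible by Proposition \ref{Prop:NoAnnuli}; in the former case $\widetilde{u}\colon\disk\to\C$ has index $0$, hence is an embedding by Proposition \ref{Prop:HoloDiskEmbedding}, contradicting the fact that it identifies the two distinct preimages of the node. If instead the critical value of $\zeta$ stays in $\Int(\im(u))$, then (Figure \ref{Fig:BoundaryHalfDegen}) $\zeta$ limits to an order-$2$ boundary critical point, which may in turn be realized as a fusion of two order-$1$ boundary critical points; this produces an $\ind(u) = 2$ annulus with no interior critical points, two order-$1$ boundary critical points, and only convex corners.

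To finish I would run the arc-cutting argument of Propositions \ref{Prop:NoAnnuli} and \ref{Prop:NoLowIndexChiMinusOne} on this last configuration: push one of the two boundary critical points along an embedded arc $a \subset \dot{\Sigma}$ with $u(a)\subset\lambda$, from that critical point to either $\partial\dot{\Sigma}$ or the other boundary critical point (Figures \ref{Fig:BoundarySplitting} and \ref{Fig:BoundaryDoubleDegen}). If $a$ separates, or if $a$ fuses the two boundary critical points, then either one resulting component is an immersed annulus with at most one order-$1$ boundary critical point and convex corners, hence of index $\leq 1$, contradicting Proposition \ref{Prop:NoAnnuli}, or one obtains an immersed $\chi = 1$ curve identifying a pair of boundary points, a contradiction as in the previous paragraph. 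If $a$ is non-separating and disjoint from the second critical point, cutting along $a$ yields a disk carrying an immersed map with one remaining order-$1$ boundary critical point; cutting that disk along a second arc of the same type removes the last critical point and produces immersed disks with convex corners, each of index $0$ and hence embedded by Proposition \ref{Prop:HoloDiskEmbedding}, which is incompatible with the self-overlap of the boundary image forced by the cut along $a$ -- the same contradiction with Lemma \ref{Lemma:DiskEmbedding} used at the end of the proof of Proposition \ref{Prop:NoAnnuli}. This exhausts the cases, so $u$ has no interior critical points. Finally, an $\ind(U) = 2$ annulus has $\ind(u) = \ind(U) - \chi(\Sigma) = 2$ by Equation \eqref{Eq:IndProjComparison}, and every interior critical point of $U$ is an interior critical point of $u = \pi_{\C}\circ U$, so the statement for $U$ follows from that for $u$.

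The step I expect to be the main obstacle is the case analysis in the last paragraph: keeping careful track, after each arc-cut, of which punctures, critical points, and identified pairs of boundary points land on which component, and of exactly why the resulting embedded disks cannot be reassembled into the original annulus. This is the same bookkeeping that makes the proofs of Propositions \ref{Prop:NoAnnuli} and \ref{Prop:NoLowIndexChiMinusOne} lengthy, so no new ideas should be required, but the branching into sub-cases is delicate and must be organized so that every branch terminates in one of the two contradictions above (an immersed annulus of index $\leq 1$, or a supposedly embedded disk with self-overlapping boundary).
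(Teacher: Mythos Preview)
Your strategy matches the paper's through the first two paragraphs: assume an interior critical point, use Theorem \ref{Thm:BranchIndex} to pin down the configuration, push the critical point to the boundary, and split into the nodal degeneration of Figure \ref{Fig:BoundaryNodalDegen} versus the order-$2$ boundary degeneration of Figure \ref{Fig:BoundaryHalfDegen}. Your treatment of the nodal case is essentially identical to the paper's.

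The divergence is in the last step, and there is a genuine gap. After the Figure \ref{Fig:BoundaryHalfDegen} degeneration you have an annulus with two order-$1$ boundary critical points and you propose to run the generic arc-cutting of Propositions \ref{Prop:NoAnnuli} and \ref{Prop:NoLowIndexChiMinusOne}: cut along a non-separating $a$, then cut the resulting disk again, and invoke ``self-overlap forced by the cut along $a$'' to contradict embeddedness of the resulting index-$0$ disks. The problem is that after the \emph{second} cut you have two disks, and the two sides of $a$ (which overlap in $\lambda$) need not lie on the same disk. In Proposition \ref{Prop:NoAnnuli} this issue does not arise because only one cut is needed and the single resulting disk already has index $0$; here the first cut leaves an index-$1$ disk, so a second cut is required, and the overlap bookkeeping you defer to ``the same contradiction'' does not go through automatically.

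The paper closes this gap with an argument that is genuinely specific to the present situation rather than a repetition of the earlier arc-cutting. It exploits the fact that the two boundary critical points are the particular pair produced by Figure \ref{Fig:BoundaryHalfDegen}, hence sit on the same boundary component with a definite left/right $x$-ordering and a definite local overlap pattern. Pushing the right critical point rightward and the left one leftward along $\widetilde{\lambda}$ yields two \emph{disjoint} essential arcs $b_{L}, b_{R}$ (disjointness coming from the $x$-value comparison), and cutting along both simultaneously gives two index-$0$ disks. The contradiction then comes from the explicit local picture of Figure \ref{Fig:BoundaryHalfDegenRedux}: whichever disk covers the lower triangle near the original critical-value pair must either double-cover it or also cover the upper triangle, so it cannot be an embedding. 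This is the ``new idea'' you suspected might not be needed; your generic cutting scheme does not see this local overlap, and that is exactly where the case analysis you flag as delicate would fail to terminate.
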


\begin{proof}
Following the narrative of the preceding proofs, we suppose that such a map $u$ with an interior critical point exists and will arrive at a contradiction by studying nearby maps in $\ModSpace^{\lambda}_{u}$. By Theorem \ref{Thm:BranchIndex} there is only one such critical point, all punctures of $u$ correspond to convex corners, and the restriction of $u$ to $\partial \dot{\Sigma}$ is an immersion.

Following the proof of Proposition \ref{Prop:NoLowIndexChiMinusOne}, we attempt to push the interior critical point towards the boundary of $\im(u)$. If this is possible as described in Figure \ref{Fig:BoundaryNodalDegen}, then a node will develop in the domain corresponding to the pinching of an arc, $a \subset \Sigma$. Let's call the limiting map $\widetilde{u}$ whose domain is the limiting nodal curve, with nodal connections deleted. 

If $a$ is boundary parallel, then the nodal curve obtained will be the disjoint union of a disk and an annulus, and the restriction of $\widetilde{u}$ to the annulus will be an immersion with all punctures corresponding to convex corners, and hence have index $0$ by Theorem \ref{Thm:BranchIndex}. This is impossible by Proposition \ref{Prop:NoAnnuli}. If $a$ is not boundary parallel, the domain of $\widetilde{u}$ will be a connected disk and the map $\widetilde{u}$ will be an immersion having all boundary punctures corresponding to convex corners so that $\ind(\widetilde{u}) = 0$, again via application of Theorem \ref{Thm:BranchIndex}. However, following Figure \ref{Fig:BoundaryNodalDegen}, $\widetilde{u}$ cannot be an embedding, contradicting Lemma \ref{Lemma:DiskEmbedding}.

We have established that we cannot push the interior critical points to $\partial \im(u)$ as described in Figure \ref{Fig:BoundaryNodalDegen}, so we push the critical point to the end of a single sheet of $\im(u)$ as described in Figure \ref{Fig:BoundaryHalfDegen}. The modified curve $\widetilde{u} \in \ModSpace^{\lambda}_{u}$ has all punctures corresponding to convex corners and exactly two boundary critical points which are as depicted in the figure. Let $\eta$ be the connected component of $\partial \dot{\Sigma}$ in which the boundary critical points of $\widetilde{u}$ are contained.

Let $\widetilde{\lambda}$ be the connected component of $\lambda \setminus \Crit(x|_{\lambda})$ to which the curve $\eta$ is mapped via $\widetilde{u}$. We will attempt to push the boundary critical points of $\widetilde{u}$ along $\widetilde{\lambda}$. Without loss of generality, suppose that the critical point is as in Figure \ref{Fig:BoundaryHalfDegen}. Locally, the portion of $\im(\widetilde{u})$ which is multiply has greater $y$-coordinate values than the portion of $\im(\widetilde{u})$ which is singly covered. 

We push the right-most boundary critical point as far as we can to the right along $\widetilde{\lambda}$. In the limit we have a domain curve obtained from cutting an embedded arc, $b_{R}$, from $\widetilde{\Sigma}$ and a holomorphic map from this domain with only a single boundary critical point associated to the left-most critical point of Figure \ref{Fig:BoundaryHalfDegen}. If the arc $b_{R}$ is boundary parallel, then one of the components of the nodal domain obtained will be an annulus with $0$ or $1$ boundary critical points, in violation of Proposition \ref{Prop:NoAnnuli}. Hence the curve $b_{R}$ must be essential. The minimum $x$-value of $b$ is attained by the left-most boundary critical point of $\widetilde{u}$, so that $b_{R}$ does not touch the other critical point of $\widetilde{u}$. As all corners of $\widetilde{u}$ are convex, $b_{R}$ cannot end at a puncture. Note that the restriction of $\widetilde{u}$ to $b_{R}$ is an embedding.

\begin{figure}[h]
\begin{overpic}[scale=.5]{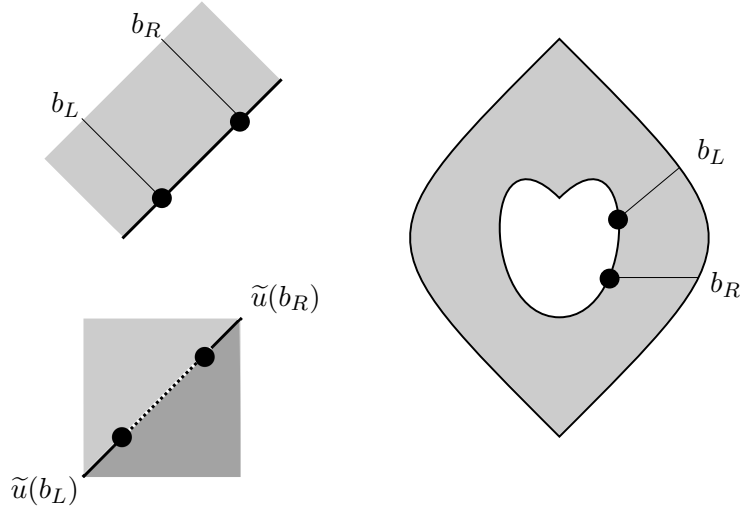}
\put(13, 67){$b_{R}$}
\put(1, 55){$b_{L}$}
\put(31, 26){$\widetilde{u}(b_{R})$}
\put(-5, -3){$\widetilde{u}(b_{L})$}
\put(100, 28){$b_{R}$}
\put(98, 48){$b_{L}$}
\end{overpic}
\vspace{3mm}
\caption{The arcs $b_{L}$ and $b_{R}$ (top-left), their images in $\C$ (bottom-left), and a schematic of their configuration within $\widetilde{\Sigma}$ (right). Compare with Figure \ref{Fig:BoundaryHalfDegen}.}
\label{Fig:BoundaryHalfDegenRedux}
\end{figure}

We may analogously define an arc $b_{L} \subset \widetilde{\Sigma}$ emanating from the left-most boundary critical point described above. The restriction of $\widetilde{u}$ to $b_{L}$ is also an embedding, and will be disjoint from $b_{R}$ as is seen by comparing their $x$-values under the map $\widetilde{u}$. Thus the complement of $b_{L} \cup b_{R}$ in $\widetilde{\Sigma}$ is a pair of disks. Our current situation is summarized in Figure \ref{Fig:BoundaryHalfDegenRedux}.

Compactifiying $\widetilde{\Sigma} \setminus (b_{L} \cup b_{R})$ appropriately, we obtain a pair of holomorphic maps from a pair of punctured disks to $\C$ with boundary on $\lambda$, with all boundary punctures corresponding to convex corners, and without critical points. Hence each holomorphic disk in $\ModSpace^{\lambda}$ has $\ind = 0$ and so must be embedded by Proposition \ref{Lemma:DiskEmbedding}. Looking at the lower-left portion of Figure \ref{Fig:BoundaryHalfDegenRedux}, we see that this is impossible. For if one of the disks covers the bottom triangle in the subfigure, then it must either do so twice, or cover both the upper and lower triangles. In either case our holomorphic disk is not an embedding.
\end{proof}

\subsection{Proof of Theorem \ref{Thm:Main}}\label{Sec:ProofCompletion}

We now combine the above results to complete our proof of Theorem \ref{Thm:Main}.

\textit{Every $\ind = 1$ holomorphic curve with boundary on $\R \times \Lambda$ is a disk with $1$ or $2$ positive punctures.} Theorem \ref{Thm:CritIndex} indicates that the domain of an $\ind(U) = 1$ curve is either a disk with $\leq 2$ positive punctures or and annulus. Then Proposition \ref{Prop:NoAnnuli} tells us that $\ind(U) = 1$ annuli cannot exist.

\textit{Every $\ind = 1$ holomorphic disk $U$ is such that $\pi_{\C}\circ U$ is an embedding. There are only finitely many such disks up to holomorphic reparameterization and translation in the $s$-coordinate.} This is the content of Section \ref{Sec:DiskEmbeddings}.

\textit{Every $\ind = 2$ holomorphic curve with boundary on $\R \times \Lambda$ is either a disk with at most $3$ positive punctures, or an annulus with at most $2$ positive punctures.} Theorem \ref{Thm:CritIndex} tells us that a $\Sigma = \disk$ curve of $\ind(U) = 2$ has at most $3$ positive punctures. It also rules out the possibility of $\ind(U) = 2$ curves with domain $\dot{\Sigma}$ having $\chi(\Sigma) < -1$. To rule out the existence of $\chi(\Sigma) = -1$ curves with $\ind(U) = 2$, apply Proposition \ref{Prop:NoLowIndexChiMinusOne}.

\textit{Every $\ind = 2$ holomorphic curve $U$ is such with $\pi_{\C} \circ U$ is simply covered and has no critical points in the interior of its domain.} By the above statement, we need only to consider disks and annuli. Following our existing notation, we write $u = \pi_{\C}\circ U$.

In the case of a disk, Theorem \ref{Thm:BranchIndex} tells us that if an interior critical point exists, then $\ind(u) \geq 2$ so that $\ind(U) \geq 3$. This establishes the second statement that $\pi_{\C}(U)$ has no interior critical points. The first statement that $\pi_{\C}(U)$ is simply covered can be derived from the first together with some consequences of the Riemann-Hurwitz formula: If a disk multiply covers a Riemann surface, then that surface must also be a disk and the covering must have at least one critical point, which we've already shown to be impossible.

In the case of an annulus, we have already established that $u$ has no interior critical points in Proposition \ref{Prop:AnnuliInteriorCriticalPoints}. If $u$ multiply covers some map $\widetilde{u}:\widetilde{\Sigma} \rightarrow \C$, then this results establishes that the covering must be free of critical points, so that $\widetilde{\Sigma}$ is also an annulus. We may derive, say from Theorem \ref{Thm:BranchIndex}, that
\begin{equation*}
\ind(u) = N\ind(\widetilde{u})
\end{equation*}
where $N \in \N$ is the covering multiplicity. By Proposition \ref{Prop:NoAnnuli}, if $N > 1$, then $\ind(U) = \ind(u) > 2$. This completes the proof of Theorem \ref{Thm:Main}.

\end{document}